\newcommand{\TODO}[1]{\textcolor{red}{[TODO\@ifnotempty{#1}{: #1}]}}
\title{A Fast Spectral Algorithm for Mean Estimation with Sub-Gaussian Rates}
\author{Zhixian Lei\thanks{Harvard John A. Paulson School of Engineering and Applied Sciences
(SEAS), Harvard University. Email: \texttt{\{zhixianlei,pvenkat\}@g.harvard.edu}.} \and Kyle
Luh\thanks{Center of Mathematical Sciences and Applications, Harvard University. Email:
\texttt{kyleluh@gmail.com}.} \and Prayaag Venkat\samethanks[1] \and Fred Zhang\thanks{Department
of Electrical Engineering and Computer Sciences, UC Berkeley. Email: \texttt{z0@berkeley.edu}. Part of the work was done at Harvard University.}}
\date{}
\begin{document}

\maketitle

\begin{abstract}%
   We study the algorithmic problem of estimating the mean of a heavy-tailed random vector in
    $\Real^d$, given $n$ i.i.d.\ samples. The goal is to design an efficient estimator that
    attains the optimal sub-gaussian error bound, only assuming that the random vector has
    bounded mean and covariance. Polynomial-time solutions to this
    problem are known but have high runtime due to their use of semi-definite programming (SDP). Moreover, conceptually, it   
    remains open whether convex relaxation is truly necessary for this
    problem.
    
    In this work, we show that it is possible to go beyond SDP and achieve better
    computational efficiency. In particular, we provide a \textit{spectral}  algorithm that achieves the optimal statistical performance and runs in time
    $\widetilde O\left(n^2 d \right)$, improving upon the previous fastest runtime $\widetilde O
    \left(n^{3.5}+ n^2d\right)$ by Cherapanamjeri \etal\  (COLT '19). Our algorithm is spectral  in that it only
    requires (approximate) eigenvector computations, which can be implemented very efficiently
    by, for example, power iteration or the Lanczos method. %Moreover, using a reduction from Depersin and Lecu{\'e}, we show that our algorithm is also robust to adversarial corruptions.

    At the core of our algorithm is a novel connection between the furthest hyperplane problem
    introduced by  Karnin \etal\ (COLT '12) and a structural lemma on heavy-tailed distributions
    by Lugosi and Mendelson (Ann.\ Stat.\ '19). This allows us to iteratively reduce the
    estimation error at a geometric rate using only the information
    derived from the
    top singular vector of the data matrix, leading to a significantly faster running time.\end{abstract}

\newpage
\section{Introduction}\label{sec:intro}
%\paragraph{Statement of problem}
Estimating the mean of a multivariate distribution from samples is among the most fundamental statistical problems. Surprisingly, it was only recently that a line of works in the statistics literature culminated in an estimator achieving the optimal statistical error under minimal assumptions~(\cite{lugosi2019mean}).
However, from an algorithmic point of view, computation of this estimator appears to be intractable. 
On the other hand, fast estimators, such as the empirical average, tend to achieve sub-optimal statistical performance. 
The following question  remains open:
\begin{center}
    \textit {Can we provide  simple, fast algorithm that computes a statistically optimal mean estimator in high dimensions, under minimal assumptions?}
\end{center} 
In this paper, we make progress towards this goal, under the classic setting where only finite mean and covariance are assumed. Formally, our problem is defined as follows.  Given $n$ i.i.d.\ copies $\bX_1, \ldots, \bX_n$ of a random vector $\bX \in \R^d$ with bounded mean $\bmu = \E \bX$ and covariance $\bSig = \E (\bX - \bm{\mu})(\bX - \bm{\mu})^T$, compute an estimate $\widehat{\bm{\mu}} = \widehat{\bm{\mu}}(\bX_1, \ldots, \bX_n)$ of  the mean $\bm{\mu}$. Our goal is to show that for any failure probability $\delta \in (0,1]$,
\[
\Pr \left( \|\widehat{\bm{\mu}} - \bm{\mu} \| > r_{\delta} \right) \leq \delta,
\]
for as small a  radius $r_{\delta}$ as possible. Moreover, we would like to compute $\widehat{\bmu}$ efficiently. The \naive\ estimator is simply the empirical mean
$
\overline{\bmu} = \frac{1}{n} \sum_{i=1}^n \bX_i$.
It is well known that among all estimators, the empirical mean minimizes mean squared error. However, if we instead use the size of the deviations to quantify the quality of the estimator, the empirical mean is only optimal for sub-gaussian random variables (\cite{catoni2012empirical}).  When $\bX \sim \mathcal{N}(\bmu, \bSig)$ we have with probability at least $1- \delta$,
\begin{equation}
\|\overline{\bm{\mu}} - \bm{\mu} \| \leq \sqrt{\frac{\Tr (\bSig)}{n}} + \sqrt{\frac{2 \| \bSig \| \log (1/ \delta)}{n}}
\end{equation}
An estimator that achieves  above is said to have \emph{sub-gaussian performance} or  \emph{sub-gaussian rate}. 

In practical settings, assuming that the samples obey a Gaussian distribution may be unrealistic. In an effort to design robust estimators, it is natural to study the mean estimation problem under very weak assumptions on the data.
A recent line   of works~(\cite{catoni2012empirical,minsker2015geometric,devroye2016sub,joly2017estimation,lugosi2018near,lugosi2019mean}) study the mean estimation problem when the samples obey a heavy-tailed distribution. %\todo{Add more references?} 

For heavy-tailed distributions the performance of the empirical mean is abysmal. If we only assume that $\bm X$ has finite mean $\bmu$ and covariance $\bSig$, then by Chebyshev's inequality, the empirical mean only achieves error  of order $ \sqrt{ {\Tr (\bSig)}/{\delta n}}$, which is worse than the sub-gaussian rate in two ways. First, its dependence on $\frac{1}{\delta}$ is exponentially worse. Second, the $\Tr (\bSig)$ term, which may grow with the dimension $d$, is multiplied the dimension-independent term $\sqrt{{1}/{\delta n}}$, whereas in the Gaussian case, the two are separate.

\paragraph{Median-of-means paradigm}
Surprisingly, recent work has shown that it is possible to improve on the performance of the empirical mean using the \emph{median-of-means} approach. For $d = 1$, the following construction, originally due to~\cite{nemirovsky1983median, jerrum1986median, alon1999median}, achieves sub-gaussian performance:
\begin{enumerate}[(i)]
    \item First, bucket the data into $k = \lceil 10\log (1/\delta)\rceil$ disjoint groups and compute their means $Z_1,Z_2,\cdots, Z_k$.
    \item Then, output the median $\widehat{\mu}$ of $ Z_1, Z_2,\cdots, Z_k$.
\end{enumerate}
% The analysis of this estimator proceeds in two steps. First, by Chebyshev's inequality, we have
% %there exists a constant $c > 0$ such that 
% for each $i \in [k]$, with probability at least $\nicefrac{3}/{4}$, it holds that $|Z_i - \mu | \lesssim  \sigma \sqrt{{\log (1/\delta)}/{n}}$. Next, suppose that $|\widehat{\mu} - \mu| \gtrsim \sigma \sqrt{{\log (1/\delta)}/{n}}$. Then, by definition of median, it must be the case that for at least half of the $Z_i$, it holds that $|Z_i - \mu| \gtrsim \sigma \sqrt{{\log (1/\delta)}/{n}}$. By   Chernoff-Hoeffding bounds, it follows that the latter event, and hence the probability that the estimator deviates more than the sub-gaussian radius, holds only with probability $O(\exp (-k)) = O(\delta )$.

A long line of work has followed this paradigm and generalized it to higher dimensions~(\cite{catoni2012empirical,devroye2016sub,joly2017estimation, lugosi2018near,lugosi2019mean}). The key challenge is to correctly define a notion of median for a collection of points in $\R^d$. \cite{minsker2015geometric} considered $\widehat{\bm{\mu}}_{GM}$ defined to be the \emph{geometric median} of the bucket means $\bZ_1, \ldots, \bZ_k$. For some constant $c_{GM}$, with probability at least $1- \delta$, it satisfies
\begin{equation}\label{eq:gmintro}
\|\widehat{\bm{\mu}}_{GM} - \bm{\mu} \| \leq c_{GM} \sqrt{\frac{\Tr \bSig \cdot \log  (1/\delta)}{n}}.
\end{equation}
This achieves the correct dependence on $\delta$, but the dimension dependent and independent terms are still not separated. Following this work, \cite{lugosi2018near} described another estimator $\widehat{\bm{\mu}}_{LM}$ which finally achieved the optimal sub-gaussian radius. The idea behind their construction is to consider \textit{all $1$-dimensional projections} of the bucket means and try to find an estimate that is close to the median of the means of all projections. Formally, the estimator is given by
\begin{align}\label{eqn:est}
    \widehat{\bm \mu}_{LM}= \argmin_{\bm\x \in \R^{d}} \max_{\bm{u}\in \mathbb S^{d-1}}
    \left\lvert\text{median}\left\{\left\langle{\bm{Z}_i},
    \bm{u}\right\rangle\right\}_{i=1}^k -  \langle \bbx, \bu\rangle\right\rvert.
\end{align}
%To analyze $\widehat{\bmu}_{LM}$, suppose that one could show that with probability at least $1-\delta$,
%\begin{equation}
%     \max_{\bm{u}\in \mathbb S^{d-1}}
%    \left\lvert\text{median}\left\{\left\langle{\bm{Z}_i},
    % \bm{u}\right\rangle\right\}_{i=1}^k -  \langle \bmu , \bu\rangle\right\rvert \leq r.
% \end{equation}
% Then, choosing $\bu = \frac{\bmu - \widehat{\bmu}_{LM}}{\| \bmu - \widehat{\bmu}_{LM} \|}$, we have that, by the pigeonhole principle, there exists $i \in [k]$ such that $|\ip{\bZ_i - \bmu}{\bu}| \leq r$ and $|\ip{\bZ_i - \widehat{\bmu}_{LM}}{\bu}| \leq r$. Using these two inequalities, we have:
% \[
% \| \bmu - \widehat{\bmu}_{LM} \| = \ip{\bmu - \widehat{\bmu}_{LM}}{\bu} = \ip{\bmu - \bZ_i}{\bu} + \ip{\bZ_i - \widehat{\bmu}_{LM}}{\bu} \leq 2r.
% \]
% Using standard tools from empirical process theory (see~\cite{hopkins2018sub} for a simplified proof), Lugosi and Mendelson showed that with probability at least $1-\delta$,
% \begin{equation}
%      \max_{\bm{u}\in \mathbb S^{d-1}}
%     \left\lvert\text{median}\left\{\left\langle{\bm{Z}_i},
%     \bm{u}\right\rangle\right\}_{i=1}^k -  \langle \bmu , \bu\rangle\right\rvert \leq O \left(\sqrt{\frac{\Tr \bSig}{n}} + \sqrt{\frac{2 \| \bSig \| \log (1/ \delta)}{n}}\right),
% \end{equation}
% completing the analysis of $\widehat{\bmu}_{LM}$.
Clearly, searching over all directions in $\mathbb S^{d-1}$ requires exponential time. The key question, therefore, is whether one can achieve both computational and statistical efficiency simutaneously. 
\paragraph{Computational considerations}
% A natural question is to ask whether there exists a \emph{computationally efficient} estimator that  achieves sub-gaussian performance. The empirical mean $\overline{\bmu}$ can clearly be computed in linear time and Cohen \etal~\cite{cohen2016geometric} showed how to compute the geometric median in nearly-linear time, so Minsker's estimator $\widehat{\bmu}_{GM}$ can also be computed efficiently.
% However, neither of these estimators achieve sub-gaussian performance. 
A priori, it is  unclear that the Lugosi-Mendelson estimator can be computed in polynomial time as a direct approach involves solving an intractable optimization problem. Moreover, the Lugosi-Mendelson analysis seems to suggest that estimation in the heavy-tailed model is conceptually harder than under (adversarial) corruptions. In the latter, each sample can be classified as either an inlier or an outlier. In the heavy-tailed setting,  Lugosi-Mendelson shows that there is a majority of the bucket means that cluster around the true mean along any projection. 
However, a given sample may be an inlier by being close to the mean when projected onto one direction, but an outlier when projected onto another. In other words, the \emph{set} of inliers may change from one direction to another.

Surprisingly, a recent line of works have established the polynomial-time computability of Lugosi-Mendelson estimator.
\cite{hopkins2018sub} formulates $\widehat{\bmu}_{LM}$ as the solution of a low-degree polynomial optimization problem and showed that using the Sum-of-Squares SDP hierarchy to relax this problem yields a sub-gaussian estimator. While the  run-time of this algorithm is polynomial, it involves solving a large SDP. 
Soon after, \cite{cherapanamjeri2019fast} provided an iterative method  in which each iteration involves solving a smaller, explicit SDP, leading to a  run-time of $\widetilde O\left(n^{3.5} + n^2d\right)$\footnote{Throughout we use $\widetilde{O}(\cdot)$ to hide polylogarithmic factors (in $n$, $d$ and $\log (1/\delta)$).}.
Even more recently, a concurrent and independent work by \cite{lecue2019robust} gave an estimator with sub-gaussian performance that can be computed in time $\widetilde{O}(n^2 d)$. 
The construction is inspired by a near-linear time algorithm for robust mean estimation under
adversarial corruptions due to \cite{ge2018learning}. The algorithm requires solving (covering) SDPs. 

We note, however, that a common technique in these algorithms is SDP,
which tends to be impractical for large sample sizes and in high dimensions. In contrast, our algorithm only requires approximate eigenvector computations. For a problem as fundamental as mean estimation, it is desirable to obtain simple and ideally
practical solutions. A key conceptual message of our work is that SDP is indeed unnecessary and
can be replaced by simple spectral techniques.
%These works, along with ours, fit into a broader story in which slow (but polynomial-time) algorithms for statistical estimation problems are first designed using the SDP  and later on inspire faster, more practical algorithms~\cite{hopkins2016fast,schramm2017fast}. 
%It is an interesting future direction to explore whether these faster mean estimation algorithms have anything in common with these fast algorithms for other problems. 

\paragraph{Our result}
In this work,  we demonstrate for the first time that mean estimation with sub-gaussian rates can be achieved  efficiently \textit{without}  SDP.  The runtime of the algorithm matches the independent work of \cite{lecue2019robust}. 
In addition, our algorithm enjoys robustness against (additive)  corruptions, where the number of adversarial points is a small fraction of $k$.

It is known that there exists an information-theoretic requirement for achieving such rates---that is,  $\delta
\geq 2^{-O(n)}$~(\cite{devroye2016sub}).
Under this assumption, we give an efficient spectral algorithm.
%\TODO{state robustness guarantee.}
\begin{theorem}\label{thm:most}
Let $\delta \geq A e^{-n}$ for a  constant $A$ and $k= \lceil 3600\log(1/\delta)\rceil$.
 Given $n$ points $\mathcal G\cup \mathcal B$, where $\mathcal G$ are i.i.d.\ samples from a distribution over $\mathbb R^d$ with mean $\bmu$ and covariance $\bSig$ and $\mathcal B$ a set of arbitrary points with $|\mathcal B| \leq k/200$, there is an efficient algorithm  that 
outputs an estimate $\widehat{\bmu} \in \R^d$ such that with probability at least $1 - \delta$,
\[
\| \bmu - \widehat{\bmu} \| \leq C \left( \sqrt{\frac{\Tr (\bSig)}{n}} + \sqrt{\frac{ \| \bSig \|
\log (1/ \delta)}{n}}\right),
\]
for a constant $C$. Furthermore, the algorithm runs in time $O\left(nd+ k^2 d \,\polylog (k,d)\right)$.
\end{theorem}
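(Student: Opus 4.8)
The plan is to keep the iterative outer loop in the spirit of Cherapanamjeri \etal, but to replace each SDP by an approximate top-eigenvector computation. Throughout, set $r:=\sqrt{\Tr(\bSig)/n}+\sqrt{\|\bSig\|\log(1/\delta)/n}$, so the target is to reach $\|\widehat{\bmu}-\bmu\|=O(r)$; partition the $n$ input points into $k=\lceil 3600\log(1/\delta)\rceil$ buckets and let $\bZ_1,\dots,\bZ_k$ be the bucket means, which costs $O(nd)$ time, and note that since $|\mathcal B|\le k/200$ at most $k/200$ buckets are corrupted. The one probabilistic ingredient is the Lugosi--Mendelson structural lemma (stated earlier): on an event of probability $\ge 1-\delta$ there is a small absolute constant $\varepsilon_0$ such that for \emph{every} unit vector $\bm u$, all but at most $\varepsilon_0 k$ of the (uncorrupted) buckets satisfy $|\langle\bZ_i-\bmu,\bm u\rangle|\le r$; the constants $3600$ and $200$ are tuned so that $\varepsilon_0$ is small enough for the overlap arguments below. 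Everything after this point is deterministic, conditioned on this event and on the corruption budget.

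The heart of the argument is a \emph{one-step progress lemma}. Suppose the current estimate $\bm x_t$ has $\alpha:=\|\bm x_t-\bmu\|>C_2 r$ for a suitable constant $C_2$, and suppose we have produced (spectrally, see below) a unit vector $\bm u_t$ with $|\langle\bmu-\bm x_t,\bm u_t\rangle|\ge c_0\alpha$ for an absolute constant $c_0$. Let $\lambda_t$ be the median of $\{\langle\bZ_i-\bm x_t,\bm u_t\rangle\}_{i=1}^k$ and set $\bm x_{t+1}=\bm x_t+\lambda_t\bm u_t$. Applying the structural lemma in the \emph{single} direction $\bm u_t$ shows $|\lambda_t-\langle\bmu-\bm x_t,\bm u_t\rangle|\le r$, and then expanding gives the exact identity $\|\bmu-\bm x_{t+1}\|^2=\alpha^2-\langle\bmu-\bm x_t,\bm u_t\rangle^2+(\lambda_t-\langle\bmu-\bm x_t,\bm u_t\rangle)^2\le (1-c_0^2/2)\,\alpha^2$ once $C_2$ is large enough that $r^2\le(c_0^2/2)\alpha^2$. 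Thus the error contracts by a fixed factor per step while $\alpha>C_2r$, and when $\alpha\le C_2r$ the same update cannot blow up ($\|\bmu-\bm x_{t+1}\|^2\le\alpha^2+r^2$), so it is safe to iterate a fixed number of times and output the last iterate. Seeding $\bm x_0$ with the geometric median-of-means estimate of~\eqref{eq:gmintro}, which is computable in $\widetilde O(kd)$ time and satisfies $\|\bm x_0-\bmu\|\le c_{GM}\sqrt{\log(1/\delta)}\cdot r$, the ratio $\|\bm x_0-\bmu\|/r$ is polylogarithmic in $1/\delta$, so $T=O(\log\log(1/\delta))=\widetilde O(1)$ outer iterations bring the error down to $O(r)$.

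The main obstacle — and the point where the furthest-hyperplane viewpoint enters — is producing $\bm u_t$ by a spectral computation. Centering, write $\bm Y_i=\bZ_i-\bm x_t$. When $\alpha\gg r$, the structural lemma applied to $\bm v=(\bmu-\bm x_t)/\alpha$ certifies that at least $(1-\varepsilon_0)k$ of the $\bm Y_i$ have $\langle\bm Y_i,\bm v\rangle\in[\alpha-r,\alpha+r]$: the bucket means form a dense cluster lying far from (and on one side of) the hyperplane through $\bm x_t$ normal to $\bm v$ — an approximate ``most-points'' instance of the furthest-hyperplane problem, where the uncorrupted-but-far buckets together with the $\le k/200$ corruptions are exactly the few points the hyperplane is allowed to ignore. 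The natural SDP-free relaxation is the top eigenvector of the second-moment matrix $\sum_i \bm Y_i\bm Y_i^{T}$: the cluster contributes a rank-one term of spectral size $\asymp\alpha^2 k$ in direction $\bm v$, while the within-cluster spread contributes only $O(r^2)$ per direction, so when $\alpha\gg r$ the top eigenvector $\bm u_t$ must satisfy $|\langle\bmu-\bm x_t,\bm u_t\rangle|\ge c_0\alpha$ — intersect the $\ge(1-\varepsilon_0)k$ buckets with large $|\langle\bm Y_i,\bm u_t\rangle|$ with the $\ge(1-\varepsilon_0)k$ buckets that are ``good'' in direction $\bm u_t$ (structural lemma again), and any bucket in the intersection witnesses the bound. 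The genuine difficulty is that heavy tails make a handful of the $\bm Y_i$ arbitrarily long, so the raw second-moment matrix can be dominated by the $O(\varepsilon_0 k)$ bad buckets and point anywhere. The fix is a truncation/filtering loop: cap each bucket's contribution at a scale $\tau$ comparable to the current distance upper bound — equivalently, iteratively discard the few buckets responsible for an outsized share of the leading eigenvalue — recomputing an approximate top eigenvector by power iteration or Lanczos ($\widetilde O(kd)$ time each round) after each removal. Soundness of the filter is again supplied by the structural lemma: in a genuine signal direction the large projections are shared among $\ge(1-\varepsilon_0)k$ buckets, whereas any spurious leading eigenvalue is supported on $\le\varepsilon_0 k$ buckets and can be removed; only $O(\varepsilon_0 k)$ good buckets are ever discarded, so the cluster's rank-one signal survives. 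Carrying this out — calibrating $\tau$ without knowing $\alpha$ exactly (a logarithmic search, or the adaptive filter), and bounding the number of filtering rounds per outer iteration by $O(k)$ — is where the bulk of the technical work lies and where the $k^2 d\,\polylog(k,d)$ term comes from; together with the $O(nd)$ bucketing cost and the $\widetilde O(kd)$ warm start, this yields the claimed running time.
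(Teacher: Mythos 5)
Your outer loop is sound and essentially parallels the paper's \textsc{Descent} procedure: the identity $\|\bmu-\bm x_{t+1}\|^2=\alpha^2-\langle\bmu-\bm x_t,\bm u_t\rangle^2+(\lambda_t-\langle\bmu-\bm x_t,\bm u_t\rangle)^2$ together with the one-direction application of the Lugosi--Mendelson condition gives a clean contraction (arguably cleaner than the paper's fixed-step update, which needs a separate distance estimate), and the initialization and iteration count are fine. The genuine gap is in the spectral subroutine that produces $\bm u_t$, which is the entire technical content of the theorem. Two specific steps fail as stated. First, the claim that ``the within-cluster spread contributes only $O(r^2)$ per direction'' to the raw second-moment matrix $\sum_i\bm Y_i\bm Y_i^T$ is false: the Lugosi--Mendelson condition only bounds, for each direction, the \emph{number} of buckets with large projection, not their contribution to the second moment; heavy tails allow those $\varepsilon_0 k$ buckets to contribute an arbitrarily large amount, and crucially the offending set is different for every direction. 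Second, and for the same reason, the soundness claim for your filter --- ``any spurious leading eigenvalue is supported on $\le\varepsilon_0 k$ buckets \ldots only $O(\varepsilon_0 k)$ good buckets are ever discarded'' --- does not follow from the structural lemma. In the adversarial-corruption model the filter argument works because the outliers are a fixed set and one charges removed inliers against removed outliers; here there is no fixed outlier set, so a loop that runs for up to $O(k)$ rounds and may discard up to $\varepsilon_0 k$ buckets per round has no a priori bound on the total number of good buckets removed, nor on its termination. A stability-type potential argument can be made to work (this is the route of later papers on heavy-tailed filtering), but it is nontrivial and is exactly the missing proof; you cannot invoke the structural lemma ``again'' to get it for free.

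The paper resolves this differently, and it is worth seeing why. Rather than discarding points, it \emph{reweights} them via multiplicative weights (downweighting buckets that over-satisfy the margin), and after each update it Bregman-projects the weight vector onto the set of \emph{smooth} distributions (no coordinate exceeding $4/k'$). Smoothness is what prevents the mass from concentrating on a small, direction-dependent set of bad buckets, so that the witness direction $\bw^*$ continues to certify a large weighted singular value in every round; the standard MWU regret bound with Bregman projections (\autoref{thm:mwu-guarantee}) then shows that after $\widetilde O(1/\theta^2)=\widetilde O(k)$ rounds most constraints accumulate large total margin, and the Karnin~\etal\ Gaussian rounding of $\bw_1,\dots,\bw_T$ extracts a single unit vector satisfying a constant fraction of the two-sided constraints. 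A pruning-and-rescaling step is also needed to keep $1/\theta^2=O(d)$ and hence the iteration count polynomial. If you want to keep your filtering route, you must supply the stability lemma that controls the spectral norm of the second-moment matrix over all large sub-collections of buckets; otherwise the argument as written does not establish the theorem.
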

The algorithm is iterative.  Each iteration only requires an (approximate) eigenvector
computation, which can be implemented in nearly linear time by power iteration or the Lanczos
algorithm. We believe that our algorithm can be fairly practical.

\paragraph{Other related works} 
%In a concurrent and independent work, Depersin and Lecu{\'e} also give an estimator achieving sub-gaussian performance that can be computed in $\Tilde{O}(n^2 d)$ . Their estimator is based on a near-linear time algorithm for robust mean estimation in the adversarial corruption model due to Ge \etal~\cite{ge2018learning}. While their algorithm requires solving (covering) SDPs, our algorithm only requires approximate   eigenvector computation each iteration.
%Interestingly, Depersin and Lecu{\'e} show that their estimator can also tolerate a small fraction of the samples being adversarially corrupted. %As we explain in~\autoref{sec:adv}, this guarantee holds   for our estimator as well.
Recently,~\cite{prasad2019unified} established a formal connection between the Huber contamination model and the heavy-tailed model we study in this paper. They leverage this connection to use an existing $\widetilde{O}(n d^2)$-time mean estimation algorithm of \cite{diakonikolas2016robust}  to design estimators for the heavy-tailed model. Under moment assumptions, their estimator achieves performance better than   geometric median~\eqref{eq:gmintro}, yet worse than sub-gaussian. 

In addition, algorithmic robust statistics has gained much attention in the
theoretical computer science community in recent years. 
A large body of works have studied  the mean estimation problem with \emph{adversarially} corrupted samples, with the focus on providing efficient algorithms (\cite{diakonikolas2016robust,lai2016agnostic,ge2018learning,dong2019quantum}). For a more complete survey, see~\cite{robustsurvey}

Going beyond mean estimation, there has been a recent spate of works on other statistical
 problems under heavy-tailed distributions. We refer the
readers to~\cite{lugosi2019survey} for a survey. 

\paragraph{Technical overview}
Our main algorithm builds upon the iterative approach of
\cite{cherapanamjeri2019fast}. 
For simplicity, assume there is no adversarial point.
At a high level, for each iteration $t$, the
algorithm will maintain a current guess $\bbx_t$ of the true mean. 
To update, Cherapanamjeri~\etal~study  the inner maximization of $\widehat{\bmu}_{LM}$~\eqref{eqn:est} with $\bbx = \bbx_t$. They showed that under Lugosi-Mendelson structral condition, the problem is essentially equivalent of following program, which we call $\mathcal{M}(\bbx_t,\bZ)$:
\begin{align*}
\text{max} \quad & \theta \\
    \text{subject to} \quad 
    & b_i \ip{\bZ_i - \bbx_t}{\bu} \geq b_i \theta \text{ for } i = 1,\ldots, k\\
   & \sum_{i=1}^k b_i  \geq 0.95k\\
   & \bb \in \{0,1\}^k , \bu \in \mathbb S^{d-1}.
\end{align*}
It can be shown that an optimal solution $\bu \in \mathbb{S}^{d-1}$ will align with the unit vector in the direction of $\bmu - \bbx_t$, and $\theta$ approximates $\|\bmu - \bbx_t\|$. Hence, one can perform the update $\bbx_{t+1}  \leftarrow \bbx_t + \gamma \theta \bu$, for some appropriate constant $\gamma$, to geometrically decrease the distance of $\bbx_{t}$ to $\bmu$. 

%Cherapanamjeri \etal\ proceed to consider a natural SDP relaxation of this optimization problem and show that solving it yields a unit vector which still has reasonably large inner product with the unit vector in the direction of $\bmu - \bbx_t$. The main bottleneck in the running time of their algorithm is solving this SDP. Our approach is to avoid the SDP altogether and directly design a subroutine which searches for a direction along which there are many outliers. 

In this work, we  start by drawing a  connection between the above program and the furthest hyperplane problem (FHP) of \cite{karnin2012unsupervised}.  This allows us to avoid the SDP approach in \cite{cherapanamjeri2019fast}. The problem can be formulated as the following:
\begin{align} \label{FHP} \tag*{(FHP)}
\text{max} \quad & \theta \\
    \text{subject to} \quad 
    &  |\ip{\bZ_i - \bbx_t}{\bu}| \geq  \theta \text{ for } i = 1,\ldots, k\label{eq:twoside}\\
   & \bu \in \mathbb S^{d-1}\nonumber.
\end{align}
In the original formulation due to Karnin \etal, the goal is to find a maximum margin linear
classifier for a collection of points, where the margin is \emph{two-sided}.  Notice that any
feasible solution to $\mathcal{M}(\bbx_t,\bZ)$ satisfies at least $0,95k$ constraints of FHP as well. For an arbitrary dataset, the two-sided margin requirement indeed provides a relaxation. One technical observation of this work is that it is \textit{not} a significant one, for the random data we care about---if a major fraction of the constraint~\eqref{eq:twoside} are satisfied, then most constraints of $\mathcal{M}(\x_t,\bZ)$ are  satisfied as well. 
 
 Unfortunately, the algorithm of Karnin \etal~cannot directly apply, as it only works under a strong promise that there exists a feasible solution that satisfies \emph{all} of the  constraints~\eqref{eq:twoside}. 
In our setting, there may not be such a feasible solution; we can only guarantee that there exists a unit vector (namely, the one in the direction of $\bmu - \bbx_t$) that satisfies most of constraints with large margin.

Our main contribution is to provide an algorithm that works even under this weak promise. We now briefly review the algorithm of Karnin \etal, show why it fails for our purpose, and explain how we address the issues that arise. Suppose that there exists a unit vector $\bu^*$ and $\theta^*$ which are feasible for  the FHP problem. Then, averaging the constraints tells us that 
\[
\frac{1}{k} \sum_{i=1}^k \ip{\bZ_i}{\bu^*}^2 \geq \theta^{*2}.
\]
Hence, if we define $\bu$ to be the top right singular vector of the matrix $\bA$ whose rows are $\bZ_i$, then
\[
\| \bA \bu \|_2^2 =  \sum_{i=1}^k \ip{\bZ_i}{\bu}^2 \geq  \sum_{i=1}^k \ip{\bZ_i}{\bu^*}^2 \geq k \theta^{*2},
\]
so  $\bu$ satisfies the constraints in~\ref{FHP} \emph{on average}. However, the distribution of the quantities $\ip{\bZ_i}{\bu}^2$ may be extremely skewed, so that $\bu$ only satisfies a few of the constraints with large margin. If this happens, however, we can downweight those constraints which are satisfied by $\bu$ with large slack to encourage it to satisfy more constraints. This reweighting procedure is repeated several times, and at the end we  use a simple rounding scheme to yield a single output vector with the desired properties from all the repetitions. In particular, this weighting scheme is essentially the same as the classic \emph{multiplicative weights update} (MWU) method~(\cite{arora2012multiplicative}) for regret minimization, as we show in Appendix~\ref{sec:fhp-mwu}.

If we are only guaranteed that $\bu^*$ satisfies most, but not all, of the constraints, then the inequality $\sum_{i=1}^k \ip{\bZ_i}{\bu^*}^2 \geq k \theta^{*2}$ may no longer hold when the points $\bZ_i$ get re-weighted and the algorithm of Karnin \etal~cannot be guaranteed to converge. To illustrate this point, consider the following extreme case. Suppose that after the first iteration, the algorithm finds the vector $\bu^*$ as the top right singular vector of $\bA$. In the re-weighting procedure, the constraints $i$ for which $\ip{\bZ_i}{\bu^*}^2 \geq \theta^{*2}$ may be down-weighted significantly, whereas the remaining constraints may be unaffected. This may result in most of the weight being concentrated on the constraints $i$ where $\ip{\bZ_i}{\bu^*}^2 \ll \theta^{*2}$. In the second iteration, we have no guarantee of the behavior of the top singular vector of the re-weighted matrix because all the weight is concentrated on a small set consisting of these ``bad'' constraints.

To address this scenario, our key technical idea is to project the weights onto the set of \textit{smooth distributions} after each update. Informally, the notion of smooth distribution enforces that no point can take too much probability mass---say, more than ${4}/{k}$. This prevents the weights from ever being concentrated on too small a subset and allows us to guarantee that $\sum_{i=1}^k \ip{\bZ_i}{\bu^*}^2 \geq k \theta^{*2}$ still holds approximately.
Moreover, the appropriate notion of projection here is that of a {Bregman projection}.
Leveraging our earlier MWU interpretation of the algorithm
(\autoref{sec:fhp-mwu}), we apply a classic regret bound for MWU under Bregman projection (\cite{arora2012multiplicative}), and this yields the same guarantee of the original algorithm.
Finally, we remark that the projection can be computed quickly. Combining all these ideas together, we  manage to bypass the barrier of having bad points, under the much weaker assumption on $\bu^*$.  

\paragraph{Organization} The remainder of this article is organized as follows. In~\autoref{sec:prelim}, we set up the notations and specify  assumptions on the data. In~\autoref{sec:gd}, we explain the high level approach based on an iterative descent procedure from~\cite{cherapanamjeri2019fast}. The procedure requires us to approximately maximize a (non-convex) objective, and we discuss its properties in~\autoref{sec:inner_max}. \autoref{sec:main-algo} contains the main technical innovations of this work, where we design and analyze a faster algorithm for the aforementioned optimization problem.
%Next, in~\autoref{sec:adv}, we show that our algorithm is robust to adversarial corruptions. 
%We conclude the paper with future directions in~\autoref{sec:final}. 
\section{Preliminaries and Assumptions} \label{sec:prelim}
In the following, we use $r_\delta = \sqrt{{\Tr (\bSig)}/{n}} + \sqrt{{ \| \bSig \| \log (1/
\delta)}/{n}}$ to denote the optimal, sub-gaussian error rate and $k =\lceil 3200\log (8/\delta)\rceil$. 
The input data $\{\bX_i\}_{i=1}^n$  consist of $\mathcal G$, a set of i.i.d.\ points, and $\mathcal B$, a set of adversarial points, with $|B| \leq k/200$.
Our algorithm  preprocesses the data $\bX_i$   into the bucket
means $\bm{Z}_1,\bZ_2,\cdots, \bZ_{2k} \in \Real^d$.%
\footnote{We assume
$\delta$ is such that $k\leq n/2$; as we mentioned in the introduction, this is information-theoretically necessary, up to a constant~(\cite{devroye2016sub}).}
Let $\mathcal B_j$ be the set of $\bX_i$ in bucket $j$.
We say that a bucket mean $\bZ_j$ is \textit{contaminated} if $B_j$ contains an adversarial $X_i \in B$ and \textit{uncontaminated} otherwise. 
Note that the number of contaminated bucket means is at most $k/200$.
%Furthermore, we may restrict the algorithm  to work  on the subspace spanned by the $2k$ bucket means if $2k\leq d$, so we can assume that $k\geq \Omega(d)$.

Our argument is built on the Lugosi-Mendelson condition. It states that under any one-dimensional projection, most of the (uncontaminated) bucket means are close to the true mean, by an additive factor of $O(r_\delta)$. 
Throughout, we pessimistically assume all contaminated bucket means do not satisfy this property (under any projection) and condition on the following event.
\begin{assumption}[Lugosi-Mendelson condition]\label{asp:key}
Under the setting above, for all unit $\bm{v}$, we have
\begin{equation*}
   \left | \left\{ i :    \ip{\bv}{\bZ_i} - \ip{\bv}{\bmu}    \geq 600 r_\delta\right\}  \right| \leq 0.05k.
\end{equation*}
\end{assumption}

\begin{lemma}[\cite{lugosi2019mean}]\label{lem:lmm}
\autoref{asp:key} holds with probability at least $1-\delta  /8$.
\end{lemma}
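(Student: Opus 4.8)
The plan is to reprove the Lugosi--Mendelson structural condition by a concentration argument, following \cite{lugosi2019mean}. Contamination is disposed of first: at most $k/200$ of the bucket means are contaminated, so, declaring all of those bad by fiat, it suffices to show that with probability at least $1-\delta/8$ the following holds for \emph{every} unit vector $\bv$ at once --- the number of \emph{uncontaminated} bucket means $\bZ_i$ with $\ip{\bv}{\bZ_i-\bmu}\ge 600\,r_\delta$ is at most $0.05k-k/200$. After relabelling, the surviving bucket means $\bZ_1,\dots,\bZ_N$ (with $N=\Theta(k)$) are mutually independent, each the average of $m=\Theta(n/k)$ i.i.d.\ samples, so $\E\ip{\bv}{\bZ_i-\bmu}=0$ and $\operatorname{Var}\ip{\bv}{\bZ_i-\bmu}=\bv^\top\bSig\bv/m\le\|\bSig\|/m$ for every $\bv$. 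For a \emph{fixed} direction this is already enough: Chebyshev bounds the probability that $\bZ_i$ is bad along $\bv$ by $O\!\big((\|\bSig\|/m)/r_\delta^2\big)$, a small absolute constant once one uses $r_\delta^2\ge\|\bSig\|\log(1/\delta)/n$ together with $k=\Theta(\log(1/\delta))$, and independence plus a Chernoff bound controls the count at the required failure level. The crux is making this bound uniform over $\bv$.

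A union bound over an $\varepsilon$-net of $\mathbb S^{d-1}$ is useless here --- the net has $e^{\Theta(d)}$ points whereas $k$ is only $\Theta(\log(1/\delta))$, with no dependence on $d$ --- so I would bound the supremum of the empirical process directly. Replace the indicator $\mathbf 1[t\ge 600\,r_\delta]$ by a piecewise-linear ramp $\phi$ equal to $0$ for $t\le c\,r_\delta$, equal to $1$ for $t\ge 600\,r_\delta$ (for a constant $c\in(0,600)$ to be chosen), and linear in between; then $\phi$ is $((600-c)r_\delta)^{-1}$-Lipschitz, $\phi(0)=0$, and $\mathbf 1[t\ge 600\,r_\delta]\le\phi(t)\le\mathbf 1[t\ge c\,r_\delta]$. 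Thus for every $\bv$ the number of bad uncontaminated buckets is at most $\sum_i\phi(\ip{\bv}{\bZ_i-\bmu})$, and by Chebyshev its expectation is $O(N/c^2)$. It remains to control the fluctuation $F:=\sup_{\bv}\frac1N\sum_i\big(\phi(\ip{\bv}{\bZ_i-\bmu})-\E\phi(\ip{\bv}{\bZ_i-\bmu})\big)$. Since $\phi\in[0,1]$, changing one $\bZ_i$ moves $F$ by at most $1/N$, so McDiarmid gives $F\le\E F+\sqrt{N^{-1}\log(8/\delta)/2}$ with probability $\ge1-\delta/8$, and the deviation term is an absolute constant below $1/50$ by the choice of $k$. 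For $\E F$ I would symmetrize, apply the Ledoux--Talagrand contraction principle (valid since $\phi$ is Lipschitz with $\phi(0)=0$), and finish with Jensen and Khintchine:
\[
\E F \;\le\; \frac{2}{N}\,\E_{\bZ}\E_{\sigma}\sup_{\bv}\sum_i\sigma_i\,\phi(\ip{\bv}{\bZ_i-\bmu})
\;\le\; \frac{2}{(600-c)\,r_\delta\,N}\,\E_{\bZ}\E_{\sigma}\Big\|\sum_i\sigma_i(\bZ_i-\bmu)\Big\|
\;\le\; \frac{2}{(600-c)\,r_\delta\,N}\Big(\sum_i\E\|\bZ_i-\bmu\|^2\Big)^{1/2},
\]
where $\sigma_1,\sigma_2,\dots$ are i.i.d.\ Rademacher signs; since $\sum_i\E\|\bZ_i-\bmu\|^2=N\,\Tr(\bSig)/m=\Theta(k^2\Tr(\bSig)/n)$ and $r_\delta^2\ge\Tr(\bSig)/n$, this is $O(1/(600-c))$. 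Hence, with probability at least $1-\delta/8$, for every $\bv$ the number of bad uncontaminated buckets is at most $N\cdot\big(O(1/c^2)+O(1/(600-c))+1/50\big)$, and choosing $c$ suitably makes this at most $0.05k-k/200$ (see \cite{lugosi2019mean} for the precise constants), which is what we needed.

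I expect the genuinely delicate step to be this passage from a fixed direction to all of $\mathbb S^{d-1}$: since no net is affordable, the argument must control $\E\sup_\bv$ head-on, and the Lipschitz surrogate together with symmetrization and the Ledoux--Talagrand contraction is exactly what collapses the supremum over $\bv$ into the single scalar $\E_\sigma\|\sum_i\sigma_i(\bZ_i-\bmu)\|$, which sees the distribution only through $\sum_i\E\|\bZ_i-\bmu\|^2$ and hence through the covariance. The remainder --- fixing $c$ against the threshold constant, absorbing the floors in $m=\lfloor n/(2k)\rfloor$ and the $|\mathcal B|\le k/200$ slack, and, should the stated constants prove tight, replacing McDiarmid by a Bernstein/Talagrand-type inequality for empirical processes (whose variance proxy here is the small per-bucket probability rather than $1$) --- is routine bookkeeping; it is carried out in \cite{lugosi2019mean}, which one may alternatively cite directly.
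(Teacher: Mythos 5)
The paper does not prove this lemma at all: it is invoked as a black-box citation to Lugosi and Mendelson, so there is no in-paper argument to compare against. Your reconstruction is, as far as I can tell, a faithful and essentially correct sketch of the actual Lugosi--Mendelson proof: discard the $\le k/200$ contaminated buckets up front, replace the indicator of $\{\ip{\bv}{\bZ_i-\bmu}\ge 600\,r_\delta\}$ by a Lipschitz ramp, control the supremum of the resulting empirical process by bounded differences plus symmetrization and Ledoux--Talagrand contraction, and observe that the Rademacher term $\E\bigl\|\sum_i\sigma_i(\bZ_i-\bmu)\bigr\|$ is bounded via Jensen by $\bigl(N\,\Tr(\bSig)/m\bigr)^{1/2}$, which divided by $r_\delta N$ and the Lipschitz constant is $O(1)$ precisely because $r_\delta$ contains both $\sqrt{\Tr(\bSig)/n}$ and $\sqrt{\|\bSig\|\log(1/\delta)/n}$. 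Your identification of the net-free uniformity over $\mathbb S^{d-1}$ as the crux is exactly right --- this is the whole point of the LM lemma and why it cannot be obtained by a union bound. The only caveats are the ones you already flag: the final numerical verification that $O(1/c^2)+O(1/(600-c))$ plus the McDiarmid deviation fits under $0.045$ depends on the hidden constants (and on the constant in $k=\Theta(\log(1/\delta))$), and the per-bucket sample size should be $\lfloor n/(2k)\rfloor$ since the paper forms $2k$ buckets; neither affects the structure of the argument. Since the paper only cites the result, either citing it directly or including your reconstruction would be acceptable.
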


\section{Descent Procedure} \label{sec:gd}
At a high level, our algorithm builds upon the  iterative descent paradigm of \cite{cherapanamjeri2019fast}. It maintains a sequence of estimates and updates via distance and gradient estimate.
\begin{definition}[distance estimate]\label{def:distest}
    We say that $d_t$ is a distance estimate (with respect to $\x_t$) if
    \begin{enumerate}[(i)]
        \item  when $\|\bmu - \x_t \|\leq 14000r_\delta$, we have  $d_t
\leq 28000r_\delta$; and
        \item when $\| \bm{\mu} - \x_t\| > 14000r_\delta$, we have
                \begin{equation} \label{cond:dist-est}
                 \frac{1}{21}\| \bm{\mu} - \x_t\|\leq d_t   \leq  2 \| \bm{\mu} - \x_t\|
                \end{equation}
    \end{enumerate}
\end{definition}
\begin{definition}[gradient estimate]\label{def:gradest}
    We say that $\bg_t$ is a gradient estimate (with respect to $\x_t$) if
        \begin{equation} \label{cond:grad-est}
            \ip{\bg_t}{\frac{\bm{\mu} - \x_t}{\| \bm{\mu} - \x_t\|}} \geq \frac{1}{200}  
        \end{equation}
        whenever $\| \bm{\mu} - \x_t\| > 14000r_\delta$.
   \end{definition}
\begin{figure}[htbp]
    \centering\begin{mdframed}[style=algo] 
\begin{enumerate}
    \item \textbf{Input:} Buckets means $\bZ_1,\ldots,\bZ_k \in \R^{d}$, initial estimate
        $\x_0$, iteration count $T_{\textsf{des}}$, and step size $\eta$.
    
    \item For $t=1,\ldots,T_{\textsf{des}}$:
        \begin{enumerate}
            % \item $\bZ' \leftarrow \textsc{Prune}(\bZ, \x_t)$.
            \item Compute $d_t = \textsc{DistEst}(\bZ', \x_t)$.
            \item Compute $\bg_t = \textsc{GradEst}(\bZ', \x_t)$.
            \item Update $\x_{t+1} = \x_t + \eta d_t \bg_t$.
        \end{enumerate}
        
    \item \textbf{Output:} $\x_{t^*}$, where $t^* = \argmin_{t} d_t$.
\end{enumerate}
\end{mdframed}
\caption{Main algorithm---\textsc{Descent}}
\label{alg:descent}
\end{figure}

Suppose we initialize the estimate with coordinate-wise median-of-means which achieves an error rate $   \sqrt{\| \bSig\| kd/n}$ (\cref{lem:gm}).  The following lemma states that if \textsc{DistEst} and \textsc{GradEst} provide distance
and gradient estimate, then the algorithm \textsc{Descent} succeeds in logarithmic iterations. 
The lemma has essentially appeared in~\cite{cherapanamjeri2019fast}, albeit with a
general initialization and a 
different  set of constants. We give a proof in Appendix~\ref{sec:gdo} for completeness. 
\begin{lemma}[convergence rate; see \cite{cherapanamjeri2019fast}] \label{lem:descent-thm}
    Assume  that for all $t \leq T_{\textsf{des}}$, $d_t$ is a distance estimate and $\bg_t$ is
    a gradient estimate (with respect to $\x_t$).  Suppose $\| \bmu - \bbx_0 \| \leq
    O\left(\sqrt{\| \bSig\| kd/n}\right)$. Then the output of Algorithm~\ref{alg:descent}
    \textsc{Descent} instantiated with $T_{\textsf{des}}= \Theta
\left( \log d \right)$ and $\eta = {1}/{8000}$ satisfies
$    \| \x_{t^*} - \bm{\mu} \| \leq O\left(r_\delta\right)$.
\end{lemma}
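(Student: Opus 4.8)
The plan is to show that the squared error contracts by a constant factor in every iteration in which the iterate is still ``far'' from $\bmu$, use this to argue that some iterate lands within $14000 r_\delta$ of $\bmu$ within $\Theta(\log d)$ steps, and then invoke the $\argmin$ selection rule together with \autoref{def:distest} to translate a small value of $d_{t^*}$ back into a small value of $\|\x_{t^*}-\bmu\|$. For the one‑step contraction, expand the update $\x_{t+1}=\x_t+\eta d_t\bg_t$ to get
\[
\|\bmu-\x_{t+1}\|^2=\|\bmu-\x_t\|^2+2\eta d_t\ip{\bg_t}{\x_t-\bmu}+\eta^2 d_t^2\|\bg_t\|^2 .
\]
Writing $D=\|\bmu-\x_t\|$, using that $\bg_t$ has norm at most one (as produced by \textsc{GradEst}) and the correlation bound of \autoref{def:gradest}, the cross term is at most $-\frac{\eta d_t}{100}D$, so $\|\bmu-\x_{t+1}\|^2\le D^2+q(d_t)$ with $q(s)=-\frac{\eta D}{100}s+\eta^2 s^2$. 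When $D>14000r_\delta$, \autoref{def:distest} guarantees $d_t\in[\frac{D}{21},2D]$. The key observation is that bounding the two terms of $q$ separately is too lossy for $\eta=1/8000$; instead, $q$ is an upward parabola with vertex at $s=D/(200\eta)=40D$, which lies to the right of the whole interval $[\frac{D}{21},2D]$, so $q$ is decreasing there and attains its maximum on the interval at $s=D/21$, giving $q(d_t)\le -\frac{\eta}{2100}D^2+\frac{\eta^2}{441}D^2=-cD^2$ for an absolute constant $c>0$. Hence $\|\bmu-\x_{t+1}\|^2\le(1-c)\|\bmu-\x_t\|^2$ whenever $\|\bmu-\x_t\|>14000r_\delta$.

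Next I would bound the number of ``far'' iterations and read off the output bound. Since $k=\Theta(\log(1/\delta))$ and $r_\delta\ge\sqrt{\|\bSig\|\log(1/\delta)/n}$, the hypothesis $\|\bmu-\x_0\|\le O(\sqrt{\|\bSig\|kd/n})$ gives $\|\bmu-\x_0\|/r_\delta=O(\sqrt d)$. If $\|\bmu-\x_t\|>14000r_\delta$ held throughout the first $T_{\textsf{des}}$ iterations, then iterating the contraction would give $\|\bmu-\x_{T_{\textsf{des}}}\|^2\le(1-c)^{T_{\textsf{des}}}\|\bmu-\x_0\|^2<(14000r_\delta)^2$ once $T_{\textsf{des}}\ge\frac{2}{\log(1/(1-c))}\log\bigl(\|\bmu-\x_0\|/(14000r_\delta)\bigr)=O(\log d)$, a contradiction. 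So for $T_{\textsf{des}}=\Theta(\log d)$ with a large enough constant there is an iterate $\x_{t_0}$ with $\|\bmu-\x_{t_0}\|\le 14000r_\delta$, whence the first condition of \autoref{def:distest} gives $d_{t_0}\le 28000r_\delta$, and therefore $d_{t^*}=\min_t d_t\le 28000r_\delta$. Finally, for the output $\x_{t^*}$: if $\|\bmu-\x_{t^*}\|\le 14000r_\delta$ we are done, and otherwise the second condition of \autoref{def:distest} gives $\frac{1}{21}\|\bmu-\x_{t^*}\|\le d_{t^*}$, so $\|\bmu-\x_{t^*}\|\le 21 d_{t^*}\le 21\cdot 28000\,r_\delta=O(r_\delta)$. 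Either way $\|\x_{t^*}-\bmu\|=O(r_\delta)$, as claimed.

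The step I expect to be the \emph{main obstacle} is extracting a genuine strict contraction from the rather loose constants ($\frac{1}{21}$ and $2$) in the distance estimate together with the fixed step size $\eta=1/8000$: this is precisely why one must treat $D^2+q(d_t)$ as a quadratic in $d_t$ and check that its vertex lies outside the feasible interval $[\frac{D}{21},2D]$, rather than bounding term by term. A secondary point to get right is that $\bg_t$ has norm at most one — needed for the $\eta^2 d_t^2\|\bg_t\|^2$ term above, and (as in the original analysis) to ensure the iterate cannot drift far back out of the $O(r_\delta)$‑ball once it enters — which should be built into the \textsc{GradEst} guarantee.
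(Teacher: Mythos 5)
Your proposal is correct and follows the same overall structure as the paper's proof: expand $\|\bmu-\x_{t+1}\|^2$, show a geometric contraction while $\|\bmu-\x_t\|>14000r_\delta$, conclude that some iterate enters the $14000r_\delta$-ball within $\Theta(\log d)$ steps since $\|\bmu-\x_0\|/r_\delta=O(\sqrt d)$, and then use the $\argmin$ rule together with \autoref{def:distest} to get $\|\x_{t^*}-\bmu\|\leq 21\cdot 28000\,r_\delta$. The one place you genuinely depart from the paper is the one-step contraction, and your version is the right one. The paper bounds the cross term and the quadratic term separately, using $d_t\geq\|\bmu-\x_t\|/21$ for the former and $d_t\leq 2\|\bmu-\x_t\|$ for the latter; its displayed computation contains an arithmetic slip ($\tfrac{1}{800000}\cdot\tfrac{1}{21}$ equals $\tfrac{1}{16800000}$, not the stated $\tfrac{1}{1680000}$), and with the corrected value the term-by-term bound yields coefficient $-\tfrac{1}{16800000}+\tfrac{1}{16000000}>0$, i.e.\ no contraction at $\eta=1/8000$. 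Your observation that $q(s)=-\tfrac{\eta D}{100}s+\eta^2 s^2$ has its vertex at $s=40D$, outside $[D/21,2D]$, so that it is maximized at the single endpoint $s=D/21$ and equals $\left(-\tfrac{\eta}{2100}+\tfrac{\eta^2}{441}\right)D^2<0$, repairs this and gives a genuine contraction with the stated step size. (Alternatively one could simply take $\eta<1/8400$ and keep the paper's term-by-term bound.) Two minor points: your parenthetical worry about the iterate drifting back out of the ball is moot, since the $\argmin$ selection already handles it exactly as you use it; and, like the paper, you gloss over the off-by-one question of whether the first iterate inside the ball has its $d_t$ computed before the loop ends, which is resolved by taking the constant in $T_{\textsf{des}}=\Theta(\log d)$ large enough.
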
 
\section{Inner Maximization and its Two-Sided Relaxation}\label{sec:inner_max}

\cite{cherapanamjeri2019fast} obtains gradient and distance estimates by  solving  the \textit{inner maximization} problem of the Lugosi-Mendelson estimator, denoted by $\mathcal{M}(\bbx, \bZ)$:
\begin{align*}
\text{max} \quad & \theta \\
    \text{subject to} \quad 
    & b_i \ip{\bZ_i - \x}{\bw} \geq b_i \theta \text{ for } i = 1,\ldots, k\\
   & \sum_{i=1}^k b_i  \geq 0.95k\\
   & \bb \in \{0,1\}^k , \bw \in \mathbb S^{d-1}.
\end{align*}
We also denote  its feasibility  version for a fixed $\theta$ by $\mathcal{M}(\theta, \bbx,
\bZ)$. Note that the constraint of $\mathcal{M}( \bbx, \bZ)$  dictates that  $0.95$ fraction of the data must lie on \textit{one} side of the hyperplane $\bw$ with a margin $\theta$.
As discussed in the introduction, we relax it by allowing a two-sided margin:
$\mathcal{M}_2(\bbx, \bZ)$.
\begin{align*}
\text{max} \quad & \theta \\
    \text{subject to} \quad 
    & b_i |\ip{\bZ_i - \x}{\bw}| \geq b_i \theta \text{ for } i = 1,\ldots, k\\
   & \sum_{i=1}^k b_i  \geq 0.95k\\
   & \bb \in \{0,1\}^k , \bw \in \mathbb S^{D-1}.
\end{align*}
One technical observation here is that under the Lugosi-Mendelson condition, this relaxation is insignificant. Indeed,  approximately solving the problem suffices for gradient and distance estimates.  

\begin{lemma}\label{cor:estimate}
Let $\theta^*$ be the optimal value of $\mathcal{M}(\x, \bZ)$ and $\bw$ 
be a unit vector such that for at least $k/8$ of the $\bZ_i$, we have $|  \ip{\bw}{\bZ_i -\x}  | \geq \theta$,
where $\theta =  0.1\theta^*$.
We have that (i) $\theta$ is a distance estimate and
(ii) either $\bw$ or $-\bw$ is a gradient estimate.
\end{lemma}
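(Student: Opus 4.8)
The plan is to first understand what $\theta^*$, the optimal value of $\mathcal{M}(\x,\bZ)$, tells us about $\|\bmu-\x\|$, and then to use the Lugosi-Mendelson condition (\autoref{asp:key}) to transfer information from the two-sided condition satisfied by $\bw$ back to a one-sided statement. For the upper bound on $\theta^*$: take any feasible $(\bw,\bb,\theta)$ for $\mathcal{M}(\x,\bZ)$. Since $\sum b_i\ge 0.95k$, at least $0.95k$ of the bucket means satisfy $\ip{\bZ_i-\x}{\bw}\ge\theta$, hence $\ip{\bZ_i-\bmu}{\bw}\ge\theta-\ip{\bmu-\x}{\bw}\ge \theta-\|\bmu-\x\|$. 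By \autoref{asp:key} applied to $\bv=\bw$, at most $0.05k$ indices have $\ip{\bw}{\bZ_i-\bmu}\ge 600r_\delta$, so we cannot have $0.95k$ indices with $\ip{\bZ_i-\bmu}{\bw}\ge\theta-\|\bmu-\x\|$ unless $\theta-\|\bmu-\x\|< 600 r_\delta$, i.e. $\theta^*\le\|\bmu-\x\|+600 r_\delta$. For the lower bound on $\theta^*$: exhibit the explicit feasible solution $\bw=(\bmu-\x)/\|\bmu-\x\|$; by \autoref{asp:key} applied to both $\bw$ and $-\bw$, at least $0.9k$ bucket means satisfy $|\ip{\bZ_i-\bmu}{\bw}|\le 600 r_\delta$, hence $\ip{\bZ_i-\x}{\bw}\ge\|\bmu-\x\|-600 r_\delta$ for those, giving $\theta^*\ge\|\bmu-\x\|-600 r_\delta$. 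So $\theta^* = \|\bmu-\x\|\pm 600 r_\delta$, and consequently $\theta=0.1\theta^*$ is sandwiched between roughly $0.1(\|\bmu-\x\|-600r_\delta)$ and $0.1(\|\bmu-\x\|+600r_\delta)$.

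Next I would verify part (i), that $\theta$ is a distance estimate in the sense of \autoref{def:distest}. When $\|\bmu-\x\|\le 14000 r_\delta$, the upper bound $\theta\le 0.1(\|\bmu-\x\|+600r_\delta)\le 0.1\cdot 14600 r_\delta\le 28000 r_\delta$ handles case (i) of the definition trivially (with huge slack). When $\|\bmu-\x\|>14000 r_\delta$, I need $\frac{1}{21}\|\bmu-\x\|\le\theta\le 2\|\bmu-\x\|$. The upper bound is immediate since $\theta\le 0.1(\|\bmu-\x\|+600 r_\delta)\le 0.1\cdot(1+\tfrac{600}{14000})\|\bmu-\x\|\le 2\|\bmu-\x\|$. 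For the lower bound, $\theta\ge 0.1(\|\bmu-\x\|-600 r_\delta)\ge 0.1(1-\tfrac{600}{14000})\|\bmu-\x\|\ge\tfrac{1}{21}\|\bmu-\x\|$; here one just checks $0.1\cdot(1-3/70) = 0.1\cdot 67/70 > 1/21$, which holds. (The constants $14000$, $21$, $28000$, and the $600$ in the Lugosi-Mendelson bound were presumably chosen precisely so these inequalities go through, so this is routine arithmetic.)

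For part (ii), the gradient estimate, assume $\|\bmu-\x\|>14000 r_\delta$ and let $\bw$ be the given unit vector: at least $k/8$ of the $\bZ_i$ satisfy $|\ip{\bw}{\bZ_i-\x}|\ge\theta$. The idea is that each such $\bZ_i$ with $\ip{\bw}{\bZ_i-\x}\ge\theta$ forces, via $\ip{\bw}{\bZ_i-\bmu}\le 600 r_\delta$ (true for all but $0.05k$ indices by \autoref{asp:key} on $\bw$), the inequality $\ip{\bw}{\bmu-\x}\ge\theta-600 r_\delta$; similarly each $\bZ_i$ with $\ip{\bw}{\bZ_i-\x}\le-\theta$ forces $\ip{-\bw}{\bmu-\x}\ge\theta-600 r_\delta$ using \autoref{asp:key} on $-\bw$. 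Since $k/8 > 0.05k+0.05k$, at least one of the two ``sides'' must contain an index that is also not among the $0.05k$ exceptional ones for that sign — more carefully, at least one side has at least $k/8 - 0.05k - 0.05k > 0$ indices... actually I should argue: if fewer than, say, $0.05k$ of the $k/8$ good indices had $\ip{\bw}{\bZ_i-\x}\ge\theta$ then more than $k/8-0.05k$ had $\le-\theta$, and all but $0.05k$ of these have $\ip{-\bw}{\bZ_i-\bmu}\le 600r_\delta$, so some such index survives and we get $\ip{-\bw}{\bmu-\x}\ge\theta-600r_\delta$; symmetrically in the other case. Either way, one of $\pm\bw$ satisfies $\ip{\pm\bw}{\bmu-\x}\ge\theta-600 r_\delta\ge 0.1(\|\bmu-\x\|-600r_\delta)-600r_\delta$, and dividing by $\|\bmu-\x\|>14000 r_\delta$ gives $\ip{\pm\bw}{\tfrac{\bmu-\x}{\|\bmu-\x\|}}\ge 0.1(1-\tfrac{600}{14000}) - \tfrac{600}{14000}\ge\tfrac{1}{200}$, which is the gradient estimate condition~\eqref{cond:grad-est}.

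The main obstacle I anticipate is the bookkeeping in part (ii): one must simultaneously track the $k/8$ indices where $\bw$ has large two-sided margin, split them by sign, and intersect with the (complement of the) $0.05k$-sized exceptional sets from \autoref{asp:key} for \emph{both} $\bw$ and $-\bw$, using the pigeonhole inequality $k/8 - 2\cdot 0.05k>0$ to guarantee a surviving witness on at least one side. Everything else — both bounds on $\theta^*$ and the distance-estimate verification — is a direct application of \autoref{asp:key} plus arithmetic with the (carefully tuned) numerical constants.
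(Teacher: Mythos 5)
Your proof is correct and takes essentially the same route as the paper's: the paper imports the sandwich $\lvert\theta^* - \|\bmu-\x\|\rvert \le 600 r_\delta$ from \autoref{lem:exact-dist} (Lemma 1 of Cherapanamjeri et al.), and packages your pigeonhole/witness-point step for part (ii) as \autoref{lem:biM} followed by \autoref{lem:grad-est}, but the underlying argument — find a bucket mean that simultaneously has margin $\ge\theta$ against $\x$ and is within $600r_\delta$ of $\bmu$ along $\pm\bw$, then transfer the margin to $\ip{\pm\bw}{\bmu-\x}$ — and the arithmetic are the same. One minor slip worth fixing: in your lower bound on $\theta^*$ you apply \autoref{asp:key} to both $\bw$ and $-\bw$ and obtain only $0.9k$ points, which does not meet the feasibility requirement $\sum_i b_i \ge 0.95k$ of $\mathcal{M}(\x,\bZ)$. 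Since the constraint of $\mathcal{M}$ is one-sided, you only need $\ip{\bw}{\bZ_i-\bmu} > -600r_\delta$, i.e., \autoref{asp:key} applied to $-\bw$ alone, which yields $0.95k$ points and restores feasibility; everything downstream is unaffected.
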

We give a proof in Appendix~\ref{sec:oinner_max}. The intuition here is simple. If $\|\x-\bmu\|\ll r_\delta$, then the Lugosi-Mendelson condition ensures at most $0.05k$ points are far from $\x$ by $O(r_\delta)$ (under any projection), so $\theta =O(r_\delta)$. On the other hand, if $\|\x-\bmu\|\gg r_\delta$, along the gradient direction, a majority of data lie only on  \textit{one} side of the hyperplane, the side that contains the true mean, so the two-sided constraint does not make a difference.

\section{Approximating the Inner Maximization}\label{sec:main-algo}
We now give an algorithm that efficiently computes a approximate solution to the relaxation of the inner maximization.
This will provide  gradient and distance
estimates for each iteration of the main \textsc{Descent} algorithm (Algorithm~\ref{alg:descent}).

 The run-time of the algorithm is proportional $1/\theta^2$. For technical reasons, we need to ensure that $ \| \bZ_i -\x \| \leq 1$ for all $i$. However, \naive ly scaling all the data would decrease $\theta$, thereby blowing up the running time. Hence, as a preprocessing step, we prune out a small fraction of points $\bZ_i -\x$ with large norm before scaling. 

\subsection{Pruning and scaling}\label{sec:prune}
The  preprocessing step (Algorithm \ref{alg:prune}) will be
executed \textit{only once} in the algorithm.
After the pruning step and an appropriate scaling, we may assume the following structures on the data.
\begin{assumption}\label{ass:scale}
Given a current estimate $\x$, the pruned dataset $\bZ\in \R^{k'\times d}$ of size $k'$, let $\bZ'_i= \tfrac{1}{B}\left(\bZ_i - \x\right)$, where $B = \max_i \| \bZ_i' - \x \|$. We assume (i) $\|\bZ_i'\|\leq 1 $; 
    (ii) $k'\geq 0.9k$; and 
    (iii)  there exists  $\theta=\Omega(1/\sqrt{d})$ and a unit vector $\bw$
        such that for at least $0.8k$ points $|\ip{  \bZ_i' }{\bw}| \geq   \theta$.
\end{assumption}
We analyze the subroutine and prove the following lemma in Appendix~\ref{apx:prune}.
\begin{lemma}\label{cor:2ass}
With probability at least $1-\delta/8$, \autoref{ass:scale} holds for any  $\x$ such that $
\|\x -\bmu\|\leq  O \left(\sqrt{ {\|\bSig\| kd}/{n} }\right)$ and $\|\x -\bmu\|
\geq \Omega(r_\delta)$.
\end{lemma}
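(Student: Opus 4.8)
The plan is to condition on two events — the Lugosi--Mendelson event of \autoref{asp:key} (which holds with probability at least $1-\delta/8$ by \autoref{lem:lmm}) and one additional norm-concentration event $\mathcal{E}$ for the bucket means — and then argue that \emph{on these events the conclusion holds deterministically}, simultaneously for every $\x$ in the stated range. The event $\mathcal{E}$ is ``at most $0.1k$ bucket means $\bZ_i$ satisfy $\|\bZ_i - \bmu\| > \tau$'', where $\tau := 10\sqrt{k\,\Tr(\bSig)/n}$. To see $\mathcal{E}$ holds with probability at least $1-\delta/8$: an uncontaminated bucket of $n/k$ samples has $\E\|\bZ_i - \bmu\|^2 = \tfrac{k}{n}\Tr(\bSig)$, so by Markov $\Pr[\|\bZ_i-\bmu\|>\tau]\le \tfrac{1}{100}$; the uncontaminated buckets are independent, so a Chernoff bound gives that more than $0.05k$ of them exceed $\tau$ with probability $e^{-\Omega(k)}\le \delta/8$ (using $k=\Theta(\log(1/\delta))$), and adding the at most $k/200$ contaminated buckets keeps the total below $0.1k$. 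Note $\mathcal{E}$ does not depend on $\x$.

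On $\mathcal{E}$, items (i) and (ii) of \autoref{ass:scale} are immediate from the construction of Algorithm~\ref{alg:prune}, which discards the $\approx 0.1k$ points of largest norm $\|\bZ_i - \x\|$ and rescales the rest by $B:=\max_i\|\bZ_i - \x\|$: then $\|\bZ_i'\|\le 1$ and $k'\ge 0.9k$. For item (iii) take $\bw := (\bmu - \x)/\|\bmu-\x\|$. Applying \autoref{asp:key} to the unit vector $-\bw$ shows that all but at most $0.05k$ buckets satisfy $\ip{\bw}{\bZ_i - \bmu}\ge -600 r_\delta$, hence $\ip{\bw}{\bZ_i - \x} = \ip{\bw}{\bZ_i-\bmu}+\|\bmu-\x\| \ge \|\bmu-\x\| - 600r_\delta \ge \tfrac12\|\bmu-\x\|$, where the last inequality uses $\|\bmu-\x\|\ge 1200 r_\delta$ (this is the precise content we extract from the hypothesis $\|\x - \bmu\|\ge\Omega(r_\delta)$). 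Pruning deletes at most $0.1k$ of these buckets, so at least $0.85k \ge 0.8k$ of the survivors $i$ satisfy $|\ip{\bw}{\bZ_i'}| = |\ip{\bw}{\bZ_i - \x}|/B \ge \|\bmu-\x\|/(2B) =: \theta$. It remains to certify $\theta = \Omega(1/\sqrt d)$, i.e., to upper bound $B$.

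The key estimate is $B \le C_1\big(\sqrt d\, r_\delta + \|\bmu - \x\|\big)$ for an absolute constant $C_1$. On $\mathcal{E}$ there are at least $0.9k$ buckets with $\|\bZ_i - \bmu\|\le \tau$, hence with $\|\bZ_i - \x\|\le \tau + \|\bmu-\x\|$, so the $0.9k$-th smallest survivor norm --- and therefore $B$ --- is at most $\tau + \|\bmu-\x\|$. (If Algorithm~\ref{alg:prune} is run once at the initialization $\x_0$ rather than at the current $\x$, one gets the same bound with an extra additive $O(\|\x_0-\bmu\|)\le O\big(\sqrt{\|\bSig\|kd/n}\big)$ term, which is absorbed below.) Comparing $\tau = 10\sqrt{k\Tr(\bSig)/n}$ and the initialization radius $\sqrt{\|\bSig\|kd/n}$ against $r_\delta \ge \max\{\sqrt{\Tr(\bSig)/n},\,\sqrt{\|\bSig\|\log(1/\delta)/n}\}$, and using $\Tr(\bSig)\le d\|\bSig\|$ together with $k=\Theta(\log(1/\delta))$ (handling $\delta$ bounded away from $1$, where $k/\log(1/\delta)=O(1)$, and $\delta$ near $1$, where $k=O(1)$, as separate cases), shows both quantities are $O(\sqrt d\, r_\delta)$, which yields the displayed bound on $B$. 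A final two-case argument concludes: if $\|\bmu-\x\|\ge C_1\sqrt d\, r_\delta$ then $B\le 2C_1\|\bmu-\x\|$ and $\theta\ge \tfrac{1}{4C_1}=\Omega(1)$; otherwise $B\le 2C_1\sqrt d\, r_\delta$ and, using $\|\bmu-\x\|\ge 1200 r_\delta$, $\theta \ge \tfrac{1200}{4C_1\sqrt d}=\Omega(1/\sqrt d)$. In either case $\theta=\Omega(1/\sqrt d)$, establishing (iii) and the lemma.

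I expect the main obstacle to be the bookkeeping behind the bound $B=O(\sqrt d\, r_\delta + \|\bmu - \x\|)$: one must track precisely how the pruning threshold $\tau$ and the initialization radius scale relative to $r_\delta$ over the whole admissible range of $\delta$ --- in particular the interaction of the $\sqrt{k}$, $\sqrt{\log(1/\delta)}$ and $\sqrt{\Tr(\bSig)/\|\bSig\|}\le\sqrt d$ factors --- and make sure the various constants (the pruned fraction, the $0.05k$ in \autoref{asp:key}, the Chernoff exponent, and the implicit constant in the hypothesis $\|\x-\bmu\|\ge\Omega(r_\delta)$) are mutually consistent, so that the counts $0.9k$, $0.85k$, $0.8k$ line up. The remaining ingredients --- the Chernoff bound for $\mathcal{E}$ and the one-sided margin estimate from \autoref{asp:key} --- are routine.
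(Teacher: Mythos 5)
Your proof is correct and follows essentially the same route as the paper, which splits the claim into a pruning lemma (per-bucket Chebyshev/Markov plus a binomial tail bound showing most bucket means lie within $O(\sqrt{kd\|\bSig\|/n})$ of $\bmu$, so pruning bounds $B$) and a scaling/margin corollary (a direction with margin $\Omega(\|\bmu-\x\|)$ for $0.8k$ points, giving $\theta=\Omega(r_\delta/B)=\Omega(1/\sqrt d)$). The only cosmetic difference is that you exhibit the witness $\bw=(\bmu-\x)/\|\bmu-\x\|$ directly from \autoref{asp:key}, whereas the paper routes this step through \autoref{lem:relax}; the substance and the constants-bookkeeping are the same.
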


\begin{figure}[htbp]
    \centering\begin{mdframed}[style=algo] 
\begin{enumerate}
    \item \textbf{Input:}  Dataset $\bZ_1,\bZ_2,\cdots, \bZ_k \in \R^d$, initial estimate $\x_0$
         \item Compute the distances $d_i = \| \bZ_i - \x_0\|$.
         \item Sort the points  by   $d_i$ in decreasing order. 
         \item Remove the top $1/10$ fraction of them. Let $\bZ_1, \cdots, \bZ_{k'}$ be the remaining data.
    \item \textbf{Output:} $\bZ_1, \cdots, \bZ_{k'}$
\end{enumerate}
\end{mdframed}
\caption{\textsc{Prune}}
\label{alg:prune}
\end{figure}

In the remainder of the section, given a current estimate $\x$, we work with the pruned and scaled data, centered at $\x$, which we call $\bZ'\in \R^{k'\times d}$.

We will aim at proving the following lemma, under~\autoref{ass:scale}.  
\begin{lemma}[key lemma] \label{lem:main}  
    Assume~\autoref{ass:scale}. Let $\delta \in (0,1)$ and $T_{\textsf{des}}= \Theta(\log d)$. 
Suppose that there exists $\bw^* \in \mathbb S^{d-1}$  which satisfies  $|\ip{\bZ'_i
}{\bw^*}| \geq  \theta^*$ for $0.8k$ points in $\{\bZ_i'\}$.
Then there is an algorithm \textsc{ApproxBregman} which, with probability at least $1-
{\delta}/4T_{\textsf{des}}$, outputs $\bw \in \mathbb S^{d-1}$ such that for at least $0.45$
fraction of the points $\bZ'_i $, 
it holds that $|\ip{\bZ'_i  }{\bw}| \geq 0.1 \theta^*$. 

Further, \textsc{ApproxBregman} runs in time $\widetilde O\left(k^2 d\right)$.
\end{lemma}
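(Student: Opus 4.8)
The plan is to design \textsc{ApproxBregman} as a multiplicative-weights / Bregman-projected version of the Karnin \etal\ spectral algorithm for \ref{FHP}, applied to the pruned and scaled data $\bZ'$, and to argue convergence using a regret bound for MWU with Bregman projections onto the set of $(4/k)$-smooth distributions. Concretely, I would maintain a weight vector $\bp^{(s)}$ on the $k'$ points, initialized to the uniform distribution. At each of $\Theta(\log k)$ rounds, form the weighted Gram matrix $\bM^{(s)} = \sum_i p_i^{(s)} \bZ'_i (\bZ'_i)^\top$, compute a (approximate) top eigenvector $\bu^{(s)}$ via power iteration or Lanczos, define the ``gain'' of point $i$ as $g_i^{(s)} = \ip{\bZ'_i}{\bu^{(s)}}^2 \in [0,1]$ (using $\|\bZ'_i\|\le 1$ from \autoref{ass:scale}(i)), update $p_i \propto p_i^{(s)} \exp(-\varepsilon g_i^{(s)})$, and then \emph{Bregman-project} the result onto the smooth polytope $\{\bp : \bp \ge 0, \sum_i p_i = 1, p_i \le 4/k\}$. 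At the end, output the $\bu^{(s)}$ from the round that maximizes the fraction of points with $\ip{\bZ'_i}{\bu^{(s)}}^2 \ge (0.1\theta^*)^2$ (or argue a single good round must exist and a simple rounding over the $\Theta(\log k)$ candidates finds it).

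The analysis proceeds in three steps. First, the smoothness invariant gives the key lower bound: since $\bw^*$ satisfies $|\ip{\bZ'_i}{\bw^*}| \ge \theta^*$ for at least $0.8k$ points, and every maintained $\bp^{(s)}$ puts mass at most $4/k$ on any single point, the ``bad'' points carry total mass at most $(0.2k)(4/k) = 0.8$, so $\sum_i p_i^{(s)} \ip{\bZ'_i}{\bw^*}^2 \ge 0.2\,\theta^{*2}$; hence $\bu^{(s)}$, being the top eigenvector, satisfies $\sum_i p_i^{(s)} g_i^{(s)} \ge 0.2\,\theta^{*2} - (\text{eigensolver error})$. Second, I invoke the standard MWU-with-Bregman-projection regret bound (\cite{arora2012multiplicative}; cf.\ \autoref{sec:fhp-mwu}): after $S = \Theta(\log k / \varepsilon^2)$ rounds, for the best fixed smooth comparator $\bq$, $\frac1S \sum_s \ip{\bp^{(s)}}{\bg^{(s)}} \le \frac1S \sum_s \ip{\bq}{\bg^{(s)}} + \varepsilon$. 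Third — the payoff step — combining these: the average weighted gain under the iterates is $\ge 0.2\theta^{*2} - o(1)$, so there is \emph{some} round $s$ whose gain $\bg^{(s)}$ cannot be made small by the best smooth comparator, i.e. $\ip{\bq}{\bg^{(s)}} \ge 0.2\theta^{*2} - \varepsilon$ for every smooth $\bq$; taking $\bq$ concentrated ($4/k$ uniform) on the smallest $k/4$ coordinates of $\bg^{(s)}$ forces that even the $k/4$-th smallest gain is $\gtrsim \theta^{*2}$, which means at least $\tfrac34 k$ points have $g_i^{(s)} \gtrsim \theta^{*2}$, in particular at least $0.45k$ points have $|\ip{\bZ'_i}{\bu^{(s)}}| \ge 0.1\theta^*$ once all absolute constants are tracked. (I will choose $\varepsilon$ a small constant so $\theta^{*2} = \Omega(1/d)$ dominates the $\varepsilon$ and eigensolver-error terms; this is where \autoref{ass:scale}(iii)'s lower bound $\theta^* = \Omega(1/\sqrt d)$ is used, and it only affects the polylog in the runtime via the precision needed in Lanczos.)

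For the runtime: there are $S = \Theta(\log k)$ rounds; each round computes an approximate top eigenvector of a $d\times d$ matrix presented implicitly as $\sum_i p_i \bZ'_i (\bZ'_i)^\top$, so a matrix-vector product costs $O(k'd) = O(kd)$ and Lanczos/power iteration needs $\polylog(k,d)$ iterations to reach the required spectral accuracy, giving $O(kd\,\polylog(k,d))$ per round; the exponential weight update is $O(k)$ and the Bregman projection onto the capped simplex is computable in $O(k\log k)$ by the standard water-filling / sorting argument. Multiplying by $S$ gives $\widetilde O(k d)$ — I will need to double-check whether the claimed $\widetilde O(k^2 d)$ comes from using $k$ (rather than $\log k$) rounds or from amplifying the success probability to $1-\delta/(4T_{\textsf{des}})$ by $\Theta(\log(1/\delta)) = \Theta(k/\log k)$ independent repetitions; in either case the bound $\widetilde O(k^2 d)$ follows. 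The main obstacle I anticipate is the second step: making the Bregman-projection regret bound interface cleanly with the capped simplex and with the approximate (rather than exact) eigenvector — I will need a generalized Pythagorean inequality for the relative-entropy Bregman divergence on the smooth polytope and a perturbation argument absorbing the eigensolver error into the $\varepsilon$ slack, and I must verify the projection step does not destroy the potential/regret telescoping. Tracking the absolute constants so that $0.8k \to 0.45k$ survives the two-sided-to-one-sided slack and the constant-factor losses in $\theta^*$ is tedious but routine.
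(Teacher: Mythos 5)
Your algorithm and the first two steps of your analysis match the paper's: the same MWU-with-Bregman-projection iteration, the same use of smoothness to guarantee $\sum_i p_i^{(s)}\ip{\bZ_i'}{\bw^*}^2 \geq \Omega(\theta^{*2})$ on every round, and the same regret bound to transfer this to per-point guarantees. The genuine gap is in your third step, the rounding. What the regret argument actually yields (and what the paper's Lemma~\ref{lem:bregman-guarantee} states) is a guarantee about the \emph{time-averaged} gains: for most $i$, $\sum_{t}\ip{\bZ_i'}{\bw_t}^2$ is large. Your proposed output --- the single round $s$ maximizing the fraction of satisfied points, justified by ``there is some round $s$ whose gain cannot be made small by the best smooth comparator'' --- does not follow: the comparator $\bq$ in the regret bound is fixed across rounds, so the conclusion is about $\frac1S\sum_s \bg^{(s)}$, not about any individual $\bg^{(s)}$. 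Indeed, with $S=\Theta(\log k/\theta^{*2})=\Theta(d\log k)$ rounds and each $g_i^{(s)}\le 1$, the averaged guarantee is consistent with each round satisfying only an $O(1/d)$ fraction of the constraints at margin $0.1\theta^*$ (different points being served by different rounds), so no single round need work. This is exactly why the paper, following Karnin \etal, rounds via a random Gaussian combination $\bw \propto \sum_t g_t\bw_t$ (Algorithm~\ref{alg:round} and \autoref{lem:round}), whose analysis uses Gaussian anti-concentration to convert the summed-squared-margin guarantee into a single unit vector with margin $0.1\theta$ on a $0.45$ fraction of points; this succeeds only with constant probability and must be repeated $O(\log(T_{\textsf{des}}/\delta))$ times, which is where part of the final runtime and the $1-\delta/4T_{\textsf{des}}$ success probability come from. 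Your parenthetical ``a simple rounding over the $\Theta(\log k)$ candidates finds it'' is therefore not a routine detail but the missing core of the argument.

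Two secondary issues. First, your claim that $\varepsilon$ can be a small constant because ``$\theta^{*2}=\Omega(1/d)$ dominates it'' is backwards: the additive regret slack must be driven below $\theta^{*2}=O(1/d)$, which is what forces $T=\Theta(\log k'/\theta^{*2})=\widetilde O(k)$ rounds (after projecting so that $d\le k$) rather than $\Theta(\log k)$; this, not repetition for amplification of the whole procedure, is the source of the $\widetilde O(k^2d)$ runtime. Second, the approximate eigensolver fails with constant probability per round while the lemma demands failure probability $\delta/4T_{\textsf{des}}$ with $\delta$ possibly $e^{-\Omega(k)}$; the paper handles this by taking $T=\Omega(\log(T_{\textsf{des}}/\delta))$ and applying a Chernoff bound so that a $4/5$ fraction of rounds have genuine approximate top singular vectors, which is enough for the regret lower bound. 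Your proposal leaves this amplification unspecified.
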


%\subsection{Singular vector computation}

\subsection{Approximation via Bregman Projection} \label{sec:breg}

In this section, we give the main algorithm for approximating $\mathcal{M}_2$. 
Suppose (by binary search) that we know the optimal margin $\theta$ in~\autoref{lem:main}. 
The goal is to find a unit vector $\bw$ such that a constant fraction of $\bZ'_i$ has margin $|\ip{\bZ'_i }{\bw}| \geq \theta$. 
The intuition is that we can start by computing the top singular vector of $\bZ'$.
Then the margin would be large on average: 
certain points may overly satisfy the margin demand while other may under-satisfy it.
Hence, we would downweight those data poitns that achieve large margin and compute the top singular vector of the weighted matrix again.

However,  it may stop making progress if it
puts too much weight on the  points that do not satisfy the margin bound. 
In this section, we show how to  prevent this scenario from occurring. 
The key idea is that at every iteration,  we ``smooth'' the weight vector $\tau_t$ so that we
can guarantee progress is being made.  We will formulate our
algorithm in the well-studied regret-minimization framework and appeal to existing
machinery~(\cite{arora2012multiplicative}) to derive the desired approximation
guarantees. 

First, we define what type of distribution we would like $\tau_t$ to
be. %This definition is equivalent to the notion of a ``high density measure'', in the notation of

\begin{definition}[Smooth distributions]
The set of \emph{smooth distributions} on $[k']$ is defined to be
\[
\mathcal{K} = \left\{p \in \Delta_{k'} : p(i) \leq \frac{4}{k'} \text{ for every } i \in [k'] \right\},
\]
where $\Delta_{k'}$ is the set of probability distributions on $[k']$,
\[
\Delta_{k'} =\left\{p: [k'] \rightarrow [0,1] : \sum_{i \in [k']} p(i) = 1\right\}.
\]
\end{definition}

In the course of the algorithm, after updating $\tau_t$ as in the previous section, it may no longer be smooth. Hence, we will replace it by the closest smooth weight vector (under KL divergence). The following fact confirms that finding this closest smooth weight vector can be done quickly.

\begin{fact}[\cite{barak2009uniform}]
For any $p \in \Delta_k$ with support size at least ${k'}/{2}$, computing
\[
\Pi_{\mathcal{K}} (p) = \argmin_{q \in \mathcal{K}} KL(p||q)
\]
can be done in $\Tilde{O}(k')$ time, where $\text{KL} (\cdot || \cdot)$ denotes the Kullback-Leibler divergence.
\end{fact}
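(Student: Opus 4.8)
The plan is to derive the Karush--Kuhn--Tucker conditions for the convex program defining $\Pi_{\mathcal K}(p)$ and to read off from them a closed-form ``water-filling'' description of the optimum, which can then be located by a single sorted pass over the coordinates. Writing $KL(p\|q)=\sum_i p(i)\log p(i)-\sum_i p(i)\log q(i)$ and discarding the $q$-independent first term, computing $\Pi_{\mathcal K}(p)$ is equivalent to maximizing $\sum_{i:\,p(i)>0}p(i)\log q(i)$ over $q\in\mathcal K$. Since $t\mapsto\log t$ is strictly concave, this objective has a unique maximizer $q^\star$; moreover, putting mass on a coordinate with $p(i)=0$ leaves the objective unchanged but wastes probability, so $q^\star$ is supported inside $\mathrm{supp}(p)$, and the hypothesis $|\mathrm{supp}(p)|\ge k'/2$ ensures that $\mathcal K$ restricted to $\mathrm{supp}(p)$ is nonempty (those coordinates can absorb mass $\tfrac{4}{k'}|\mathrm{supp}(p)|\ge 2>1$) so that the optimal value is finite.

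First I would form the Lagrangian with multiplier $\lambda$ for $\sum_i q(i)=1$ and multipliers $\beta_i\ge0$ for the caps $q(i)\le 4/k'$. Stationarity gives $p(i)/q(i)=\lambda+\beta_i$, so a coordinate with slack cap has $q^\star(i)=p(i)/\lambda$, while a capped coordinate has $q^\star(i)=4/k'$ and $\beta_i\ge0$ forces $p(i)\ge 4\lambda/k'$. Hence there is a threshold $t^\star:=4\lambda^\star/k'$ with $q^\star(i)=4/k'$ when $p(i)\ge t^\star$, $q^\star(i)=p(i)/\lambda^\star$ when $0<p(i)<t^\star$, and $q^\star(i)=0$ when $p(i)=0$; the multiplier $\lambda^\star$ is pinned down by normalization, so that if $m$ is the number of capped coordinates then $\lambda^\star=\big(\sum_{i:\,0<p(i)<t^\star}p(i)\big)/\big(1-\tfrac{4m}{k'}\big)$. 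A short argument shows $m<k'/4$ at the optimum: if $4m/k'\ge1$ the capped coordinates alone carry mass $\ge1$, which either violates $\sum_i q(i)=1$ or forces every remaining coordinate to $0$, and since $m<k'/2\le|\mathrm{supp}(p)|$ leaves a positive-$p$ coordinate with $q=0$, making the objective $-\infty$ and contradicting optimality; thus the denominator above is positive and $\lambda^\star$ is well-defined.

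The algorithm is then: sort the coordinates so that $p_{(1)}\ge p_{(2)}\ge\cdots$ in time $O(k'\log k')$ and precompute the prefix sums of the sorted sequence. For each candidate count of capped coordinates $m=0,1,\dots,\lceil k'/4\rceil-1$, compute $\lambda_m=\big(\sum_{j>m}p_{(j)}\big)/(1-\tfrac{4m}{k'})$ in $O(1)$ from the prefix sums and test the consistency conditions $p_{(m)}\ge\tfrac{4\lambda_m}{k'}$ and $p_{(m+1)}<\tfrac{4\lambda_m}{k'}$ (using the convention $p_{(0)}:=+\infty$ and a fixed tie-breaking rule among coordinates equal to the threshold). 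By the KKT characterization exactly one value of $m$ passes both tests, and it reconstructs the unique optimum $q^\star$ via the trichotomy above; one can additionally verify that the predicate is monotone in $m$ and replace the scan by a binary search, but even the linear scan costs only $O(k')$, so the running time is dominated by the sort and is $\widetilde O(k')$.

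The main obstacle I anticipate is not any isolated step but the bookkeeping around the threshold: arguing that the ``capped / rescaled / zero'' trichotomy is genuinely optimal (rather than merely stationary) via strict concavity and complementary slackness, checking that the consistency predicate is well-defined and monotone in $m$ so that the search returns the correct split, and using $|\mathrm{supp}(p)|\ge k'/2$ carefully twice --- once for feasibility and finiteness of the KL, once to force $m<k'/4$. Handling ties when several $p(i)$ coincide with the threshold needs a consistent convention but causes no real difficulty.
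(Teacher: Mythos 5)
This ``Fact'' is imported: the paper gives no proof of its own and simply cites \cite{barak2009uniform}, so there is nothing internal to compare against. Your derivation is correct and is, in substance, the standard argument behind that citation: the KKT conditions force the optimum to have the water-filling form $q^\star(i)=\min\bigl(c\,p(i),\,4/k'\bigr)$ on the support of $p$, the normalization $\sum_i q^\star(i)=1$ pins down the unique threshold because the total mass is monotone in $c$, and a sort plus prefix sums locates it in $O(k'\log k')$ time. Your two uses of the hypothesis $|\mathrm{supp}(p)|\ge k'/2$ (feasibility/finiteness of the objective, and ruling out $m\ge k'/4$ capped coordinates) are exactly the right places where it is needed. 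One remark worth making explicit: the paper writes the projection as $\argmin_{q\in\mathcal K} KL(p\,\|\,q)$, whereas the Bregman projection actually used in the MWU guarantee (Theorem 2.4 of Arora--Hazan--Kale, and in Barak--Hardt--Kale) is $\argmin_{q\in\mathcal K} KL(q\,\|\,p)$. You proved the statement as literally written, but fortunately the discrepancy is harmless here: for this particular constraint set both directions of the KL divergence yield stationarity conditions of the same form $q(i)=\min(c\,p(i),4/k')$ with $c$ fixed by normalization, so the two projections coincide and your algorithm computes the one the analysis actually requires. Your anticipated bookkeeping issues (tie-breaking at the threshold, monotonicity of the consistency predicate in $m$) are real but routine, and strict concavity of $\log$ does give uniqueness as you claim.
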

\begin{remark}
In our algorithm, we will only compute Bregman projections of distributions of
support size   at least ${k'}/{2}$. This is because neither our reweighting procedure nor the actual projection algorithm of~\cite{barak2009uniform} sets any coordinates to $0$ and the initial weight is uniform.
\end{remark}
\begin{figure}[ht]
    \centering\begin{mdframed}[style=algo] 
\begin{enumerate}
    \item \textbf{Input:} Buckets means $\bZ'  \in \R^{k' \times d}$,   margin $\theta$,
        iteration count $T \in \N$
    
    \item Initialize weights:  $\bm\tau_1 = \frac{1}{k'} (1, \ldots,1) \in \R^{k'}$.
    
    \item For $t=1,\ldots,T$, repeat: 
        \begin{enumerate}
            \item Let $\bA_t$ be the $k' \times d$ matrix whose $i$th row is $\sqrt{\bm\tau_t(i)} (\bZ'_i )$ and $\bw_t$  be its approximate top right singular vector .
            
            \item Set $\sigma_t(i) = |\ip{\bZ'_i  }{\bw_t}|$.
            
            \item Reweight: If $\|\bA_t \bw_t \|_2^2 \geq \frac{\theta^2}{10}$, then
                $\bm\tau_{t+1}(i) =\bm \tau_t(i) \left(1 -\bm\sigma_t(i)^2/2\right)$ for $i \in [k']$. Otherwise, do not change the weights.
            
            \item Normalize: Let $Z = \sum_{i \in [k']} \bm\tau_{t+1}(i)$ and redefine $\bm\tau_{t+1} \leftarrow \frac{1}{Z} \bm\tau_{t+1}$.
            
            \item Compute the Bregman projection: $\bm\tau_{t+1} \leftarrow \Pi_{\mathcal{K}}(\bm\tau_{t+1})$.
        \end{enumerate}
        
 \item \textbf{Output}: $\bw \leftarrow$\textsc{Round}$\left(\bZ',\{\bw_j\}_{t=1}^T
     ,\theta\right)$ (or report \textsc{Fail} if \textsc{Round} fails).
       
\end{enumerate}
\end{mdframed}
    \caption{Approximate inner maximization via Bregman projection---\textsc{ApproxBregman}}
    \label{alg:bregman}
\end{figure}

Since Algorithm~\ref{alg:bregman} is  the MWU method with Bregman projections onto the set $\mathcal{K}$, we will apply the following regret guarantee.\footnote{To be more precise, the iterations $t$ in which $\|\bA_t \bw_t \|_2^2 \geq \frac{\theta^2}{10}$ behave according to the MWU method. Whenever $\|\bA_t \bw_t \|_2^2 < \frac{\theta^2}{10}$, the algorithm does not update the weights, which has no effect on the other iterations.}

\begin{theorem}[Theorem 2.4 of~\cite{arora2012multiplicative}] \label{thm:mwu-guarantee}
Suppose that for $\sigma^2_t (i) \in [0,1]$ for all $i \in [k']$ and $t \in [T]$. Then after $T$ iterations of Algorithm~\ref{alg:bregman}, for any $p \in \mathcal{K}$, it holds that:
\[
\sum_{t=1}^T \ip{\bm\tau_t}{\bm\sigma^2_t} \leq \frac{3}{2} \sum_{t=1}^T \ip{\bm p}{\bm \sigma^2_t} + 2 \KL(\bm p||\bm \tau_1).
\]
\end{theorem}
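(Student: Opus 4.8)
\textbf{Proof proposal for Theorem~\ref{thm:mwu-guarantee} (the MWU regret bound with Bregman projections).}

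The plan is to recognize that the weight updates in Algorithm~\ref{alg:bregman}---in those iterations $t$ where the reweighting is actually performed---are exactly the multiplicative weights update rule with loss vector $\bm\sigma_t^2 \in [0,1]^{k'}$ and learning rate $1/2$, followed by a Bregman (KL) projection onto the convex set $\mathcal K$. (In iterations where $\|\bA_t\bw_t\|_2^2 < \theta^2/10$ the weights are untouched, so such rounds contribute $0$ to both sides below if we restrict attention to them, or equivalently we may simply re-index and assume every round updates; I would state this reduction at the start, mirroring the footnote already in the text.) With this identification, the statement is a specialization of the standard regret analysis of MWU / Hedge with projections, and the proof follows the template of~\cite{arora2012multiplicative}.

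The key steps, in order: (1) Write $\Phi_t = \sum_{i\in[k']}\bm\tau_t(i)$ just before normalization, i.e.\ track the potential $\log \Phi_{t+1}/\Phi_t$. Using $1 - x \ge e^{-x - x^2}$ is not needed here because the multiplicative factor is already $1 - \bm\sigma_t^2(i)/2$; instead use $1 - \tfrac12\bm\sigma_t^2(i) \le \exp(-\tfrac12\bm\sigma_t^2(i))$ on one side and, since $\bm\sigma_t^2(i)\in[0,1]$, the inequality $1 - \tfrac12 x \ge e^{-\tfrac34 x}$ (valid for $x\in[0,1]$, which is why the constant $3/2$ rather than $1$ appears) on the other. (2) Telescoping the upper bound gives, after $T$ steps, a bound on $\sum_t \ip{\bm\tau_t}{\bm\sigma_t^2}$ in terms of $\log(1/\Phi_{T+1})$ plus lower-order terms; telescoping the lower bound, evaluated at the comparator coordinate or rather summed against the comparator distribution $\bm p$, gives $\log \Phi_{T+1} \ge -\tfrac32\sum_t \ip{\bm p}{\bm\sigma_t^2} - \KL(\bm p \| \bm\tau_1) + (\text{projection terms})$. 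Combining the two yields the claimed inequality modulo the effect of the projection step. (3) Handle the Bregman projection: the generalized Pythagorean theorem for KL divergence states that for $\bm p \in \mathcal K$ and any $\bm q$, $\KL(\bm p \| \Pi_{\mathcal K}(\bm q)) \le \KL(\bm p \| \bm q)$. Inserting this at each round shows the projection only \emph{helps} the comparator term and does not hurt the potential telescoping, so the final bound is unchanged; this is precisely the content of Theorem 2.4 in~\cite{arora2012multiplicative}, and I would either cite it directly or reproduce the three-line potential argument.

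The main obstacle---really the only subtlety---is bookkeeping the interaction between the normalization step, the Bregman projection step, and the potential function, so that the projection is accounted for with the correct sign and the constant $3/2$ is traced honestly through the elementary inequality $1-\tfrac12 x \ge e^{-\tfrac34 x}$ on $[0,1]$. Since this is verbatim the setting of~\cite[Theorem 2.4]{arora2012multiplicative} (MWU with multiplicative loss factor $1-\varepsilon m$, $\varepsilon = 1/2$, losses in $[0,1]$, plus KL-projection onto a convex set), the cleanest route is to invoke that theorem directly after establishing the identification in step~(0); a self-contained proof would just be the standard potential-function argument above, and I would relegate it to an appendix if a referee wants it spelled out. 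No heavy machinery beyond the KL Pythagorean inequality is required.
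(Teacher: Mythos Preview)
The paper does not prove this theorem at all: it is stated verbatim as a citation (``Theorem 2.4 of~\cite{arora2012multiplicative}'') and then invoked as a black box in the proof of Lemma~\ref{lem:bregman-guarantee}. So there is no ``paper's own proof'' to compare against.

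That said, your proposal is a correct sketch of how this result is actually proved in the cited source, and your identification of Algorithm~\ref{alg:bregman} with MWU at learning rate $\eta = 1/2$ plus KL-projection onto $\mathcal K$ is exactly the point the paper is making informally (and in the footnote) when it invokes the citation. The constants line up: with $\eta = 1/2$ the standard MWU bound $\sum_t \ip{\bm\tau_t}{\bm m_t} \le (1+\eta)\sum_t \ip{\bm p}{\bm m_t} + \eta^{-1}\KL(\bm p \| \bm\tau_1)$ gives the $3/2$ and the $2$, and the generalized Pythagorean inequality for KL handles the projection step with the correct sign, as you note. Your elementary inequality $1 - \tfrac12 x \ge e^{-\tfrac34 x}$ on $[0,1]$ is valid and is one way to trace the $(1+\eta)$ factor; the Arora--Hazan--Kale survey phrases it slightly differently but equivalently. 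If anything, a self-contained appendix proof along your lines would be \emph{more} than what the paper provides---the paper is content to cite.
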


Finally, we comment that we cannot \naive ly apply the power method for the singular vector
computation. The power
method has failure probability of $\nicefrac{1}{10}$, whereas our algorithm should fail with
probability at most $\delta =O (\text{exp}(-k))$ that is exponentially low.
However, we note
that the algorithm  computes the top singular vectors of a sequence of
matrices $\bA_1, \bA_2, \ldots, \bA_T$. 
Observe that as long as $T = \Omega (\log (1/\delta)) = \Omega (k)$,
with probability at least $1-{\delta}/{8}$,
the power method will succeed for $0.9T$ of the matrices. 
We will show that this many successes suffice to guarantee correctness of our algorithm.

We first prove the following lemma, a requirement for the rounding algorithm to succeed.
\begin{lemma}[regret analysis] \label{lem:bregman-guarantee}
    After $T = O\left( \max \left( \frac{\log k'}{\theta^2}, \log (T_{\textsf{des}}/ \delta) \right) \right)$
iterations of Algorithm~\ref{alg:bregman}, for all but a ${1}/{4}$ fraction of $i \in [k']$:
\[
\sum_{t=1}^T \ip{\bZ'_i}{\bw_t}^2 \geq 100 \log k'.
\]
\end{lemma}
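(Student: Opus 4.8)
The plan is to instantiate the MWU regret bound (\autoref{thm:mwu-guarantee}) with the comparator $\bm p = \Pi_{\mathcal K}(\mathbf 1_S / |S|)$, where $S$ is the set of at least $0.8k$ indices $i$ for which the promised direction $\bw^*$ satisfies $|\ip{\bZ_i'}{\bw^*}|\ge\theta^*$ (here I am using $\theta \le \theta^*/3$ or so from the binary search, so $\sigma_t^2(i)\in[0,1]$ as required). The key point is that $\bm p$ is supported (nearly) uniformly on a $0.8k$-sized set, so $\bm p(i)\le 1/(0.8k)\le 4/k'$ and indeed $\bm p\in\mathcal K$; moreover $\KL(\bm p\|\bm\tau_1)\le\log(k'/|S|)+O(1)=O(1)$ since $\bm\tau_1$ is uniform. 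First I would bound the right-hand side: $\sum_t\ip{\bm p}{\bm\sigma_t^2}$ — on the set $S$ we have $\ip{\bZ_i'}{\bw^*}^2\ge\theta^{*2}$, but $\bm\sigma_t^2$ uses $\bw_t$, not $\bw^*$, so this does not directly help. Instead I would bound $\ip{\bm p}{\bm\sigma_t^2} = \sum_i \bm p(i)\ip{\bZ_i'}{\bw_t}^2 \le \tfrac{1}{0.8}\cdot\tfrac1{k'}\sum_i\ip{\bZ_i'}{\bw_t}^2 \le \tfrac{1}{0.8}\cdot\tfrac1{k'}\|\bA_t'\bw_t\|^2$ using $\|\bZ_i'\|\le1$ — wait, that is the unweighted matrix. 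The clean route: since $\bm p\in\mathcal K$, $\ip{\bm p}{\bm\sigma_t^2}\le \tfrac{4}{k'}\sum_i \ip{\bZ_i'}{\bw_t}^2$, and I would argue $\ip{\bZ_i'}{\bw_t}^2\le 1$ pointwise (as $\|\bZ_i'\|\le1$, $\|\bw_t\|=1$), giving $\ip{\bm p}{\bm\sigma_t^2}\le 4$ trivially — too lossy. The right bound uses that $\bw_t$ is the top singular vector, so $\sum_i\ip{\bZ_i'}{\bw_t}^2$ can be large, but we only need an upper bound on $\ip{\bm p}{\bm\sigma_t^2}$, which is at most $\max_i\bm p(i)\cdot\sum_i\sigma_t^2(i)\le (4/k')\cdot(\text{const}\cdot k')=O(1)$... this is where I need to be careful.

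Let me reorganize. The real content is a lower bound on the left-hand side $\sum_t\ip{\bm\tau_t}{\bm\sigma_t^2}=\sum_t\|\bA_t\bw_t\|_2^2$. In each iteration $t$ where reweighting happens we have $\|\bA_t\bw_t\|_2^2\ge\theta^2/10$ by construction; in iterations where it does not, $\|\bA_t\bw_t\|_2^2<\theta^2/10$ but then $\bw_t$ is still the top singular vector of $\bA_t$ whose weight $\bm\tau_t\in\mathcal K$ is smooth, and I claim the existence of $\bw^*$ forces $\|\bA_t\bw_t\|_2^2\ge\|\bA_t\bw^*\|_2^2=\sum_i\bm\tau_t(i)\ip{\bZ_i'}{\bw^*}^2\ge\sum_{i\in S}\bm\tau_t(i)\theta^{*2}$, and because $\bm\tau_t$ is smooth, $\sum_{i\in S}\bm\tau_t(i)\ge 1-\tfrac{4}{k'}(k'-|S|)\ge 1-4(0.2)=0.2$, so $\|\bA_t\bw_t\|_2^2\ge 0.2\,\theta^{*2}\ge\theta^2/10$ after adjusting constants — contradicting the "no reweight" case. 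Hence reweighting happens in \emph{every} iteration (after dealing with the $O(\log k')$ power-method failures, which I would absorb by noting they occur at most $0.1T$ times and contribute a negligible deficit, or by restricting attention to the $0.9T$ successful rounds). Therefore $\sum_t\ip{\bm\tau_t}{\bm\sigma_t^2}\ge c T\theta^2$ for an absolute constant $c$. On the other side, $\tfrac32\sum_t\ip{\bm p}{\bm\sigma_t^2}+2\KL(\bm p\|\bm\tau_1)$: for the comparator I would instead take $\bm p$ to be uniform on the $3k'/4$ indices of \emph{smallest} value of $\sum_t\ip{\bZ_i'}{\bw_t}^2$ (call this set $S'$), which lies in $\mathcal K$ and has $\KL(\bm p\|\bm\tau_1)=O(1)$; then $\sum_t\ip{\bm p}{\bm\sigma_t^2}=\tfrac{1}{|S'|}\sum_{i\in S'}\sum_t\ip{\bZ_i'}{\bw_t}^2$.

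Now combine: $cT\theta^2\le\tfrac32\cdot\tfrac{1}{|S'|}\sum_{i\in S'}\sum_t\ip{\bZ_i'}{\bw_t}^2+O(1)$. Suppose for contradiction that \emph{more than} a $1/4$ fraction of $i\in[k']$ had $\sum_t\ip{\bZ_i'}{\bw_t}^2<100\log k'$; then $S'$, being the $3k'/4$ smallest, would consist entirely of such indices, so $\tfrac{1}{|S'|}\sum_{i\in S'}\sum_t\ip{\bZ_i'}{\bw_t}^2<100\log k'$, giving $cT\theta^2\le 150\log k'+O(1)$, i.e. $T\le O(\log k'/\theta^2)$ — but $T$ is chosen to be $\omega(\log k'/\theta^2)$ in the lemma statement (the max includes $\Omega(\log k'/\theta^2)$ with a large enough constant), contradiction. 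Hence the desired bound holds for all but a $1/4$ fraction. The failure probability $\delta/(4T_{\textsf{des}})$ enters only through the power-method success event (at least $0.9T$ rounds succeed), using $T=\Omega(\log(T_{\textsf{des}}/\delta))$ and a union bound over iterations.

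I expect the main obstacle to be the bookkeeping around the failed power-iteration rounds: one must argue that in the (at most $0.1T$) rounds where $\bw_t$ is not a genuine top singular vector, the reweighting step does not undo progress — concretely, that the multiplicative weights still satisfy the regret inequality of \autoref{thm:mwu-guarantee} (which only needs $\sigma_t^2(i)\in[0,1]$, not that $\bw_t$ is optimal) and that the "reweighting happens in every good round" argument only needs the $0.9T$ good rounds, so the lower bound becomes $\ge 0.9cT\theta^2$, still $\omega(\log k')$. A secondary technical point is pinning down the constants so that $\sigma_t^2(i)=\ip{\bZ_i'}{\bw_t}^2\in[0,1]$ (immediate from \autoref{ass:scale}(i)) and so that "$0.2\theta^{*2}\ge\theta^2/10$" holds with the constant slack coming from the binary-search relation between $\theta$ and $\theta^*$; these are routine but need to be stated carefully to feed into the rounding step (\autoref{lem:main}).
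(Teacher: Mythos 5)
Your overall strategy is the paper's: lower-bound $\sum_t \ip{\btau_t}{\bsig^2_t}$ by $\Omega(T\theta^2)$ using the promised $\bw^*$, the bound $|S|\ge 0.8k'$, and the smoothness of $\btau_t$ (so that $S$ always retains weight at least $1/5$); handle power-method failures by a Chernoff bound restricting to a constant fraction of good rounds; then play the regret bound of \autoref{thm:mwu-guarantee} against a comparator in $\mathcal{K}$ to convert the aggregate guarantee into a per-index one. All of that is sound and matches the paper's proof, including the observation that the approximate-singular-vector factor of $1/2$ and the non-reweighting rounds only cost constants.

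The final combinatorial step, however, is wrong as written. You take $\bp$ uniform on $S'$, the $3k'/4$ indices with \emph{smallest} $\alpha_i := \sum_t \ip{\bZ'_i}{\bw_t}^2$, and claim that if more than a $1/4$ fraction of indices have $\alpha_i < 100\log k'$, then ``$S'$ consists entirely of such indices.'' That implication is false: if, say, $0.26k'$ indices have $\alpha_i=0$ and the remaining $0.74k'$ have $\alpha_i$ enormous, then the $3k'/4$ smallest still contain roughly $0.49k'$ indices with enormous $\alpha_i$, the average over $S'$ is huge, and no contradiction is reached --- yet the lemma's conclusion fails. ($S'$ consists entirely of small indices only when at least $3k'/4$ of all indices are small.) The fix is to take the comparator uniform on the $k'/4$ smallest indices (mass exactly $4/k'$ each, still in $\mathcal{K}$), or uniform on the full set of small indices; under the contradiction hypothesis every index in that set has $\alpha_i<100\log k'$, so $\sum_t\ip{\bp}{\bsig^2_t}<100\log k'$ and the regret bound forces $\Omega(T\theta^2)\le 150\log k'+O(1)$, the contradiction you want. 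The paper reaches the same end differently: for each $i$ outside the bottom quarter it builds a comparator uniform on $\{j:\alpha_j\le\alpha_i\}$, which is dominated by $\be_i$ and lies in $\mathcal{K}$ --- the same idea, executed per-index rather than via a single aggregate contradiction.
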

\begin{proof}
Let $S = \{i \in [k'] : |\ip{\bZ'_i}{\bw^*}| \geq \theta\}$ be the set of constraints satisfied by the unit vector $\bw^*$ whose existence is guaranteed in the hypothesis of~\autoref{lem:main}. By assumption, we have that $|S| \geq 0.8{k'}$. We simply calculate each of the terms in~\autoref{thm:mwu-guarantee}. 

First,  let $\mathcal{I} = \{t \in [T]: \text{$\bw_t$ is a $1/2$-approximate top singular vector of $\bA_t$}\}$. Then we have for any $t \in \mathcal{I}$: 
\begin{align*} 
\ip{\btau_t}{\bsig^2_t} &=  \sum_{i=1}^{k'} \bm\tau_t(i) \ip{\bZ'_i}{\bw_t}^2 && \text{(by definition)}\\ 
 &\geq  \frac{1}{2} \sum_{i=1}^{k'} \bm\tau_t(i) \ip{\bZ'_i}{\bw^*}^2 &&\text{(because $\bw_t$ is an approximate top eigenvector)} \\
 &\geq \frac{1}{2} \sum_{i \in S} \bm\tau_t(i) \ip{\bZ'_i}{\bw^*}^2 \\
 &\geq \frac{1}{2} \sum_{i \in S} \bm\tau_t(i) \theta^2 && \text{(by definition of $S$)} \\
 &\geq \frac{1}{2} \cdot \frac{1}{5} \theta^2 = \frac{\theta^2}{10} && \text{(because $|S| \geq 0.8k'$ and $\bm\tau_t \in \mathcal{K}$).}
\end{align*}
Summing this inequality over $t \in [T]$, we have that 
\[
\sum_{t=1}^T \ip{\btau_t}{\bsig ^2_t} \geq \sum_{t \in \mathcal{I}} \ip{\btau_t}{\bsig ^2_t} \geq  \frac{|\mathcal{I}|}{10} \theta^2.
\]
By  Chernoff-Hoeffding bound combined with the guarantee of power iteration (\autoref{fact:power}), as long as $T = \Omega
(\log (T_{\textsf{des}} / \delta ))$, then with probability at least $1 - \frac{\delta }{8
T_{\textsf{des}}}$, for at least $\frac{4}{5} T$ iterations, it will be the case that $\bw_t$ is an approximate top singular vector. In other words, $|\mathcal{I}| \geq \frac{4}{5} T$, so that we have:
\[
\sum_{t=1}^T \ip{\btau_t}{\bsig ^2_t} \geq \frac{2T}{25} \theta^2.
\]

Next, note that if we choose $\bp = \be_i$, then 
\[
\sum_{t=1}^T \ip{\bp}{\bsig^2_t} = \sum_{t=1}^T \ip{\bZ'_i}{\bw_t}^2.
\]
Because $\bm \tau_1$ is uniform, the relative entropy term in~\autoref{thm:mwu-guarantee} is at
most $\log k'$. Let's pretend for a moment that $\bm e_i \in \mathcal{K}$ (it is not). 
Then after plugging in the above calculations to~\autoref{thm:mwu-guarantee} and rearranging, we have that for every $i \in [k']$
\[
 \sum_{t=1}^T \ip{\bZ'_i}{\bw_t}^2 \geq \frac{2T}{25} \theta^2 - 2 \log k' \geq 100 \log k',
\]
by setting $T \geq \frac{10^5 \log k'}{\theta^2}$. This gives the bound claimed in the statement of the lemma, but it remains to fix the invalid assumption that $\be_i \in \mathcal{K}$. To do so, we will construct, for most $i \in [k']$, another distribution $\bp' \in \mathcal{K}$ such that
\[
\sum_{t=1}^T \ip{\be_i}{\bsig^2_t} \geq \sum_{t=1}^T \ip{\bp'}{\bsig^2_t}.
\] 
Combining this with $\sum_{t=1}^T \ip{\bp'}{\bsig^2_t} \geq 100 \log k'$ gives the desired lower
bound, for most $i$. Write $\bm \alpha = \sum_{t=1}^T \bsig^2_t$, and without loss of generality assume that 
\[
\bm \alpha_1 \geq \bm \alpha_2 \geq \ldots \geq \bm \alpha_{k'}.
\]
For $i = 1, \ldots, {4k'}/{5}$, take $\bp'$ to be uniform on those $j \in [k']$ such that $\bm
\alpha_i \geq \bm \alpha_j$ (there are at least ${k'}/{5}$ such $i$). By construction, we
have that $\ip{\bm \alpha}{\be_i} \geq \ip{\bm \alpha}{\bp'}$. Finally, observe that $\bp' \in
\mathcal{K}$ because $\bp'$ is uniform on a set of size at least ${k'}/{5}$.
\end{proof}

Observe that the \textsc{ApproxBregman} produces a sequence of vectors by the end. \cite{karnin2012unsupervised} provides a rounding algorithm that combines them into one with the desired margin bound. We describe the algorithm and prove the following lemma in Appendix~\ref{sec:obreg}.
\begin{lemma}\label{cor:fullround}
The algorithm \textsc{Round} (Algorithm~\ref{alg:round}) outputs $\bw$ that satisfies $|\ip{\bZ'_i}{\bw}| \geq 0.1\theta$ for  $0.45k$ of the points, with probability at least $1-\delta/4T_{\textsf{des}}$.
\end{lemma}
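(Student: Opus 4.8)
The plan is to analyze the rounding subroutine \textsc{Round} of~\cite{karnin2012unsupervised}: given the vectors $\bw_1,\dots,\bw_T$ produced by \textsc{ApproxBregman}, it repeats the following \emph{Gaussian combination} step $R=\Theta\!\big(\log(T_{\textsf{des}}/\delta)\big)$ times and returns the best candidate it finds---draw $g_1,\dots,g_T$ i.i.d.\ $\mathcal N(0,1)$, set $\bm v=\sum_{t=1}^T g_t\bw_t$, and form the unit vector $\bm v/\|\bm v\|$. I will show one step succeeds with a positive absolute constant probability and then boost. Throughout, condition on the event of~\autoref{lem:bregman-guarantee} (probability at least $1-\delta/8T_{\textsf{des}}$): there is $G\subseteq[k']$ with $|G|\ge\tfrac34 k'$ such that $V_i:=\sum_{t=1}^T\ip{\bZ'_i}{\bw_t}^2\ge 100\log k'$ for all $i\in G$, with $T=\Theta(\log k'/\theta^2)$.

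For one step, fix a point $i$. The random variable $\ip{\bZ'_i}{\bm v}=\sum_t g_t\ip{\bZ'_i}{\bw_t}$ is centered Gaussian with variance exactly $V_i$, so Gaussian anti-concentration gives $\Pr\!\big[|\ip{\bZ'_i}{\bm v}|<c\sqrt{V_i}\big]\le\sqrt{2/\pi}\,c$ for any $c>0$; fix $c$ to a small absolute constant so that this is at most a target $\varepsilon_0$. For the normalizer, since the $g_t$ are independent and centered the cross terms vanish in expectation, so $\E\|\bm v\|^2=\sum_t\|\bw_t\|^2=T$, whence $\|\bm v\|^2\le 10T$ with probability at least $9/10$ by Markov's inequality. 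Let $N=\#\{i\in G:|\ip{\bZ'_i}{\bm v}|\ge c\sqrt{V_i}\}$. Since $\E[|G|-N]\le\varepsilon_0|G|$, Markov's inequality gives $N\ge(1-3\varepsilon_0)|G|$ with probability at least $\tfrac23$. On the intersection of this event with $\{\|\bm v\|^2\le 10T\}$---of probability at least an absolute constant $p_0\ge\tfrac12$---each of the $\ge(1-3\varepsilon_0)\tfrac34 k'$ points $i$ counted by $N$ satisfies
\[
\big|\big\langle\bZ'_i,\,\bm v/\|\bm v\|\big\rangle\big|\;=\;\frac{|\ip{\bZ'_i}{\bm v}|}{\|\bm v\|}\;\ge\;\frac{c\sqrt{V_i}}{\sqrt{10T}}\;\ge\;\frac{c\sqrt{100\log k'}}{\sqrt{10T}}\;=\;\Omega(\theta),
\]
using $T=\Theta(\log k'/\theta^2)$. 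Choosing $\varepsilon_0$ small (say so that $(1-3\varepsilon_0)\tfrac34\ge\tfrac12$) and using that the sum-of-squares bound of~\autoref{lem:bregman-guarantee} together with the constant in $T$ carry room to spare, the right-hand side is at least $0.1\theta$ and at least $\tfrac12 k'\ge 0.45k$ points survive (using $k'\ge 0.9k$). Thus one step yields a valid $\bw$ with probability at least $p_0$.

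To conclude, the $R$ steps use independent randomness, so the probability that no step produces a $\bw$ with $|\ip{\bZ'_i}{\bw}|\ge 0.1\theta$ for at least $0.45k$ points is at most $(1-p_0)^R\le\delta/8T_{\textsf{des}}$ once $R=\Theta(\log(T_{\textsf{des}}/\delta))$; \textsc{Round} outputs such a $\bw$ whenever one is found and reports \textsc{Fail} otherwise. A union bound with the conditioning event of~\autoref{lem:bregman-guarantee} gives overall success probability at least $1-\delta/4T_{\textsf{des}}$.

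The main obstacle is the absolute-constant bookkeeping inside the one-step analysis: because $\sqrt{V_i/T}$ is only a fixed, modest multiple of $\theta$, pushing the Gaussian-rounding margin up to exactly $0.1\theta$ while keeping a $0.45$ fraction of the points requires invoking the full strength of~\autoref{lem:bregman-guarantee} (whose lower bound exceeds $100\log k'$ by a controllable constant factor once $T$ is taken large enough) together with a careful control of $\|\bm v\|$. A secondary, conceptual point---and the reason the rounding uses Gaussian rather than $\pm1$ coefficients---is that a Rademacher sum of variance $V_i$ obeys only a Paley--Zygmund-type bound $\Pr[(\cdot)^2\ge\varepsilon V_i]\ge(1-\varepsilon)^2/3$, so it can fail to be anti-concentrated a constant fraction of the time, which is too weak to retain $45\%$ of the points; with Gaussian coefficients one instead has $\Pr[|\mathcal N(0,1)|\ge c]=1-O(c)$, so nearly all good points survive a single step.
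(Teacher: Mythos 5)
Your overall architecture matches the paper's: condition on the regret guarantee of \autoref{lem:bregman-guarantee}, show that a single Gaussian-combination trial succeeds with constant probability, and boost over $\Theta(\log(T_{\textsf{des}}/\delta))$ independent trials (the trials are indeed independent conditionally on $\bw_1,\ldots,\bw_T$, so this part is fine and matches the paper). The difference is that the paper invokes Lemma~6 of Karnin~\etal\ (restated as \autoref{lem:round}) as a black box for the single-trial guarantee, whereas you re-derive it from scratch via Gaussian anti-concentration on $\ip{\bZ'_i}{\bm v}$ and Markov control of $\|\bm v\|^2$. That is a legitimate, more self-contained route, and your remark on why Gaussian rather than Rademacher coefficients are needed (Rademacher sums can fail to anti-concentrate with constant probability) is correct and a genuinely useful observation.

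However, the constant bookkeeping you flag as ``the main obstacle'' is a real gap, and your proposed fix---that \autoref{lem:bregman-guarantee} ``carries room to spare once $T$ is taken large enough''---does not work. The guaranteed ratio $V_i/T$ saturates as $T$ grows: the regret analysis only yields $\ip{\btau_t}{\bsig_t^2}\ge\theta^2/10$ per good iteration (the $1/2$ from approximate eigenvectors times the $1/5$ from $|S|\ge 0.8k'$ and the $4/k'$ smoothness cap), so the guaranteed lower bound on $V_i$ is of the form $\tfrac{2}{3}\bigl(\tfrac{2T}{25}\theta^2-2\log k'\bigr)$ and hence $\sqrt{V_i/T}$ is capped near $0.23\,\theta$ no matter how large $T$ is; increasing $T$ increases $V_i$ and $\|\bm v\|^2$ at the same rate. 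Your guaranteed margin is therefore about $c\,\sqrt{V_i/(10T)}\approx 0.07\,c\,\theta$, and to retain $0.45k\le 0.5k'$ survivors out of $|G|\ge 0.75k'$ via your Markov step you need $3\varepsilon_0\le 1/3$, i.e.\ $c\lesssim 0.14$, which lands the margin near $\theta/100$---an order of magnitude short of $0.1\,\theta$. So your argument proves the lemma with $0.1$ replaced by a smaller absolute constant (which would still suffice downstream after re-tuning the constants in \autoref{cor:estimate} and the descent analysis), but it does not establish the stated constant, and the specific claim that taking $T$ larger closes the gap is false. To genuinely recover $0.1\,\theta$ you would need the sharper single-trial analysis of Karnin~\etal's Lemma~6 (or accept a much smaller constant success probability per trial, trading the anti-concentration parameter against the surviving fraction differently).
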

Finally, we are now ready to prove the key lemma using \textsc{ApproxBregman}.
\begin{proof}[Proof of~\autoref{lem:main}]
The correctness follows from~\autoref{cor:fullround}. We focus on run-time. By~\autoref{ass:scale}, we have that $1/\theta^2 = O(d)$. By projecting onto the subspace spanned by the
bucket means, we can assume $d \leq k$. Hence, \autoref{lem:bregman-guarantee} implies that the
iteration count is $\widetilde O(k')$. The runtime of each iteration is bounded by the cost of computing an 
approximate top singular vector of a $k'$ by $d$ matrix via the power method, which is $\widetilde O(k'd)$
by~\autoref{fact:power}. Finally, each repetition of the rounding algorithm \textsc{Round} takes time
$\widetilde O(k'd)$, and the number of trials is at most $O(\log (1/\delta'))$ by definition. Thus, the
runtime of the rounding algorithm is $\widetilde O(k^2d)$ .
\end{proof}

\subsection{Putting it Together}\label{sec:tgt}

Our main algorithm begins with the initial guess as the coordinate-wise median-of-means of $\{\bZ_i\}_{i=k+1}^{2k}$. Then it proceeds via the \textsc{Descent} procedure, where the gradient and  distance estimates are given by \textsc{ApproxBregman}. To ensure independence, we only use the  $\{\bZ_i\}_{i=1}^{k+1}$ for the descent part. We provide the full description in Appendix~\ref{sec:code}.

 We now give a proof sketch our main theorem. The formal proof is found in Appendix \ref{sec:omain}.
\begin{proof}[Proof sketch of~\autoref{thm:most}]
Our argument is conditioned on (i) that the Lugosi-Mendelson condition holds, (ii)  that the initial guess $\x_0$ satisfies an error bound $\sqrt{kd\|\bSig\|/n}$, and (iii) that  the \textsc{Prune} procedure succeeds. Each fails with probability at most $\delta /8$.

The guarantee of \textsc{ApproxBregman}, along with~\autoref{cor:estimate}, implies that \textsc{GradEst} and \textsc{DistEst} succeed with probability at least $1-\delta/4T_{\textsf{des}}$ each iteration. Taking union bound over all above events, the failure probability of the final algorithm is at most $\delta$. Applying the guarantee of the \textsc{Descent} procedure and error bound of the initial guess finishes the proof.
\end{proof}

 \section{Conclusion and Discussion} \label{sec:final}

In this paper, we provided a faster algorithm for estimating the mean of a heavy-tailed random vector that achieves subgaussian performance. Unlike previous algorithms, our faster running time is achieved by the use of a simple spectral method that iteratively updates the current estimate of the mean until it is sufficiently close to the true mean.

Our work suggests two natural directions for future research. First, is it possible to achieve subgaussian performance for heavy-tailed \emph{covariance} estimation in polynomial time? Currently, the best polynomial-time covariance estimators do not achieve the optimal statistical rate (see~\cite{lugosi2019survey, sam19}), while a natural generalization of the (computationally intractable) Lugosi-Mendelson estimator is known to achieve subgaussian performance. One approach would be to build on our framework; the key technical challenge is to design an efficient subroutine for producing bi-criteria approximate solutions to the natural generalization of the inner maximization problem to the covariance setting. 

Another direction is to achieve a truly linear-time algorithm for the mean estimation problem. Our iterative procedure for solving the inner maximization problem take $\widetilde O(k)$ iterations; is it possible to reduce this to a constant?

% Acknowledgments---Will not appear in anonymized version
\section*{Acknowledgements}
{The authors would like to thank Boaz Barak and Jelani Nelson for helpful conversations. In particular, we would like to thank Boaz for directing us to the paper~\cite{barak2009uniform}. 

Zhixian Lei and Prayaag Venkat have been supported by NSF awards CCF 1565264 and CNS 1618026, and the Simons Foundation. Kyle Luh has been partially supported by NSF postdoctoral fellowship DMS-1702533.  Prayaag Venkat has also been supported by an NSF Graduate Fellowship under grant DGE1745303.}

\bibliography{bib}

\appendix
\section{Main algorithm description}\label{sec:code}
\begin{figure}[htbp]
    \centering\begin{mdframed}[style=algo] 
\begin{enumerate}
    \item \textbf{Input:}  Dataset $\bZ'$ and  current estimate
        $\x_t$
    \item  $\bZ_i' \leftarrow \left(\bZ_i'-\x_t\right) / B$ by scaling each point by $B =\max_i
        \|\bZ_i' - \x_t \| $.
    \item  $\theta\leftarrow$  the largest margin $\theta$ such that \textsc{ApproxBregman}$\left(\bZ', \theta, T\right)$ does not \textsc{Fail}, where $T = O(\log k / \theta^2)$.
    \item \textbf{Output:} $\widehat{d} = \tfrac{B}{10}\theta $.
\end{enumerate}
\end{mdframed}
\caption{Distance estimation---\textsc{DistEst}}
\label{alg:dist}
\end{figure}
\begin{figure}[htbp]
    \centering\begin{mdframed}[style=algo] 
\begin{enumerate}
    \item \textbf{Input:}  Dataset $\bZ'$ and   current estimate $\x_t$ 
    \item  $\bZ_i' \leftarrow \left(\bZ_i'-\x_t\right) / B$ by scaling each point by $B =\max_i
        \|\bZ_i' - \x_t \| $.
    \item  $\theta\leftarrow$  the largest margin $\theta$ such that \textsc{ApproxBregman}$\left(\bZ', \theta, T\right)$ does not \textsc{Fail}, where $T = O\left(\log k / \theta^2\right)$.
    \item  $\widehat \bg \leftarrow \textsc{ApproxBregman}\left(\bZ', \theta, T\right)$
    \item  If $\langle \widehat \bg, \bZ_i'\rangle \geq 0.1\theta$ for at least $0.5k$ of the $\bZ_i'$, \textbf{output}  $\widehat \bg$; otherwise, \textbf{output}   $-\widehat \bg$.
\end{enumerate}
\end{mdframed}
\caption{Gradient estimation---\textsc{GradEst}}
\label{alg:grad}
\end{figure}
\begin{figure}[htbp]
    \centering\begin{mdframed}[style=algo] 
\begin{enumerate}
    \item \textbf{Input:}  Dataset $\bX_1,\bX_2,\cdots, \bX_n \in \R^d$ 
    \item Let $k = 3600\log (1/\delta)$. Divide the data into $2k$ groups. 
        \item Compute the bucket mean of each group: $\bZ_1,\bZ_2,\cdots,\bZ_{2k}\in \R^d$.
        \item Compute the coordinate-wise median-of-means of the second half of bucket means:
            $$\x_0 \leftarrow \textsc{MedianOfMeans}(\{ \bZ_{k+1},\cdots,\bZ_{2k} \}).$$
        \item Prune the first half of bucket means, where $\bZ$ is the data matrix of $\{\bZ_i\}_{i=1}^k7$: 
        \[
            \bZ' \leftarrow \textsc{Prune}(\bZ, \x_0).
        \]
    \item $T_{\textsf{des}} \leftarrow \Theta(\log d), \eta  \leftarrow 1/8000$
        \item Run the main descent procedure: $\widehat \bmu \leftarrow \textsc{Descent}(\bZ',
            \x_0, T_{\textsf{des}} , \eta)$, using \textsc{DistEst} and \textsc{GradEst} as above.%, where  gradient estimation is given by Algorithm~\ref{alg:bregman}.
    \item \textbf{Output:} $\widehat\bmu $
\end{enumerate}
\end{mdframed}
\caption{Final algorithm}
\label{alg:whole}
\end{figure}

\section{Technical facts} \label{apx:fact}

We formally state the statistical guarantee of empirical average and coordinate-wise
median-of-means. The former is an application of the Chebyshev's inequality. The latter is folklore but can follow easily from the Lugosi-Mendelson condition by considering the
projections onto standard basis vectors. 
\begin{lemma}[empirical mean] \label{lem:avg}
    Let $\delta \in (0,1)$. Given $n$ i.i.d.\ copies $\bX_1, \ldots, \bX_n$ of a random vector $\bX \in \R^d$ with 
mean $\bmu $ and covariance $\bSig$, let
$\overline \bmu = \tfrac{1}{n} \sum_{i=1}^n \bX_i$. Then  with probability at least $1-\delta$,
 \[
\|\overline \bmu - \bmu \| \leq \sqrt{ \frac{\Tr (\bSig)}{\delta n}}.
 \] 
\end{lemma}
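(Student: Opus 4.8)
\textbf{Proof proposal for Lemma~\ref{lem:avg} (empirical mean).}
The plan is a direct application of Chebyshev's (equivalently Markov's) inequality to the squared error $\|\overline{\bmu} - \bmu\|^2$, which is a nonnegative random variable whose expectation is easy to compute exactly.

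First I would compute the second moment. Writing $\overline{\bmu} - \bmu = \frac{1}{n}\sum_{i=1}^n (\bX_i - \bmu)$, expand
\[
\E\,\|\overline{\bmu} - \bmu\|^2 = \frac{1}{n^2}\sum_{i=1}^n\sum_{j=1}^n \E\,\langle \bX_i - \bmu,\ \bX_j - \bmu\rangle.
\]
For $i \neq j$, independence and $\E[\bX_i - \bmu] = 0$ kill the cross terms, so only the $n$ diagonal terms survive, each equal to $\E\,\|\bX - \bmu\|^2 = \E\,\Tr\!\big((\bX-\bmu)(\bX-\bmu)^T\big) = \Tr(\bSig)$. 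Hence $\E\,\|\overline{\bmu} - \bmu\|^2 = \Tr(\bSig)/n$.

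Next I would apply Markov's inequality to $\|\overline{\bmu} - \bmu\|^2$: for any $t > 0$,
\[
\Pr\!\big(\|\overline{\bmu} - \bmu\|^2 > t\big) \leq \frac{\E\,\|\overline{\bmu} - \bmu\|^2}{t} = \frac{\Tr(\bSig)}{n\,t}.
\]
Choosing $t = \Tr(\bSig)/(\delta n)$ makes the right-hand side equal to $\delta$, so with probability at least $1-\delta$ we have $\|\overline{\bmu} - \bmu\|^2 \leq \Tr(\bSig)/(\delta n)$, and taking square roots gives the claimed bound.

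There is no real obstacle here; the only point requiring any care is the cross-term cancellation and the identity $\E\,\|\bX-\bmu\|^2 = \Tr(\bSig)$, both of which are routine. The lemma is essentially a textbook statement, included for completeness as noted in the surrounding text.
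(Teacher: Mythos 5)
Your proof is correct and matches the paper's approach: the paper simply remarks that the lemma is an application of Chebyshev's inequality, and your argument (computing $\E\,\|\overline{\bmu}-\bmu\|^2 = \Tr(\bSig)/n$ via cross-term cancellation and then applying Markov to the squared norm) is exactly the standard instantiation of that remark.
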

\begin{lemma}[coordinate-wise median-of-means]\label{lem:gm}
Assume the Lugosi-Mendelson condition (\autoref{asp:key}). Let $\{\bZ_i\}_{i=1}^k $ be the bucket means from $n$ points (with at most $k/200$ contaminated) and $\widehat\bmu $ be their coordinate-wise
median-of-means. Then
with probability at least $1-\delta/8$,
\begin{equation*}
    \|\widehat{\bm{\mu}}  - \bm{\mu} \| \leq 600\sqrt{d} r_\delta \lesssim    \sqrt{\frac{d\|\bSig\| \log (1/\delta)}{n}}.
\end{equation*}
\end{lemma}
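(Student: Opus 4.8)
The plan is to condition on the Lugosi--Mendelson event (\autoref{asp:key}), which by \autoref{lem:lmm} holds with probability at least $1-\delta/8$, and then analyze the coordinate-wise median one coordinate at a time. Write $\widehat{\bmu} = (\widehat\mu_1,\dots,\widehat\mu_d)$, so that each $\widehat\mu_j = \text{median}\{\,(\bZ_i)_j : i\in[k]\,\}$. The reason this goes through cleanly is that \autoref{asp:key} is quantified over \emph{all} unit directions simultaneously, so a single $\delta/8$-probability event controls all $d$ coordinates at once; a from-scratch proof applying a one-dimensional median-of-means concentration coordinate by coordinate and union-bounding over $d$ would instead need $k \gtrsim \log(d/\delta)$, which we do not have here since $k = \Theta(\log(1/\delta))$ does not grow with $d$.

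First I would fix a coordinate $j$ and instantiate \autoref{asp:key} twice, with $\bm{v} = \be_j$ and with $\bm{v} = -\be_j$. This yields $\bigl|\{i : (\bZ_i)_j - \mu_j \ge 600 r_\delta\}\bigr| \le 0.05k$ and $\bigl|\{i : (\bZ_i)_j - \mu_j \le -600 r_\delta\}\bigr| \le 0.05k$. Hence fewer than $k/2$ of the bucket means have $j$-th coordinate exceeding $\mu_j + 600 r_\delta$, and fewer than $k/2$ have it below $\mu_j - 600 r_\delta$, so neither a value above nor a value below that window can be the median; therefore $|\widehat\mu_j - \mu_j| \le 600 r_\delta$. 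No separate handling of the $\le k/200$ contaminated bucket means is needed: \autoref{asp:key} is already stated pessimistically over all $k$ bucket means, so the contaminated ones are absorbed into the $0.05k$ slack.

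Summing over coordinates gives $\|\widehat{\bmu} - \bmu\|^2 = \sum_{j=1}^d (\widehat\mu_j - \mu_j)^2 \le d\,(600 r_\delta)^2$, i.e.\ $\|\widehat{\bmu} - \bmu\| \le 600\sqrt{d}\, r_\delta$. The displayed form then follows by expanding $r_\delta = \sqrt{\Tr(\bSig)/n} + \sqrt{\|\bSig\|\log(1/\delta)/n}$ and using $\Tr(\bSig) \le d\|\bSig\|$ together with $k = \Theta(\log(1/\delta))$; this last manipulation only affects constant and logarithmic factors and is where the ``$\lesssim$'' in the statement does its (loose) work.

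Regarding difficulty: there is essentially no serious obstacle, since this lemma is just a repackaging of the standard one-dimensional median-of-means guarantee. The only points requiring care are (i) applying \autoref{asp:key} to both $+\be_j$ and $-\be_j$ to obtain two-sided control of each coordinate of the median (the assumption itself is one-sided), and (ii) recognizing that it is precisely the direction-uniformity of \autoref{asp:key} that lets a single high-probability event suffice for all $d$ coordinates without a union bound over the dimension.
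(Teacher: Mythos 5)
Your proof is correct and is exactly the argument the paper intends: the paper gives no written proof of this lemma, remarking only that it "can follow easily from the Lugosi-Mendelson condition by considering the projections onto standard basis vectors," which is precisely your coordinate-wise application of \autoref{asp:key} to $\pm\be_j$ followed by summing the squared coordinate errors. (The only loose end is the paper's own "$\lesssim$" clause, which as you implicitly note requires $\Tr(\bSig)\lesssim \|\bSig\|\log(1/\delta)$ to absorb the $\sqrt{d\,\Tr(\bSig)/n}$ term; this is an imprecision in the statement, not in your argument for the main bound $600\sqrt{d}\,r_\delta$.)
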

Our algorithm  requires
computing an approximation of 
the top (right) singular vector of a matrix $\bA \in \R^{m \times n}$.
The classic power method is efficient for this task.
\begin{fact}[power iteration; see Theorem 3.1 of~\cite{blumfoundations}] \label{fact:power}
Let $\lambda (\bA) = \max_{\bbx \in \mathbb{S}^{n-1}} \|\bA \bbx \|_2^2$.
With probability at least ${9}/{10}$, the power method (with random initialization) outputs a unit vector $\bw$ such that $\|\bA \bw \|_2^2 \geq \frac{\lambda (\bA)}{2}$ in $O(\log n)$ iterations. Moreover, each iteration can be performed in $O(mn)$ time.
\end{fact}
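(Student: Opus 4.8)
The plan is to run the standard power method on the symmetric PSD matrix $\mathbf{M} = \bA^T\bA \in \R^{n\times n}$. Since $\|\bA \bbx\|_2^2 = \bbx^T \mathbf{M}\bbx$, the quantity $\lambda(\bA)$ is exactly the top eigenvalue $\sigma_1^2$ of $\mathbf{M}$, with unit eigenvector $\bv_1$; more generally let $\sigma_1^2 \ge \sigma_2^2 \ge \cdots \ge \sigma_n^2 \ge 0$ be the eigenvalues of $\mathbf{M}$ with orthonormal eigenvectors $\bv_1, \ldots, \bv_n$. One step of the method sends $\bbx \mapsto \mathbf{M}\bbx = \bA^T(\bA\bbx)$ followed by renormalization; this is two matrix--vector products, hence $O(mn)$ time per iteration, which already delivers the last claim of the statement.

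First I would dispose of the random start. Take $\bbx_0$ uniform on $\mathbb S^{n-1}$ and expand $\bbx_0 = \sum_{i=1}^n c_i \bv_i$. A one-dimensional anti-concentration estimate for the spherical (equivalently Gaussian) distribution shows that $|c_1| \ge 1/n$ with probability at least $9/10$; this event is the only place randomness enters, and it is the source of the $1/10$ failure probability.

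Next comes the core, which is a gap-free convergence argument. After $k$ iterations the unnormalized iterate is $\mathbf{M}^k\bbx_0 = \sum_i c_i \sigma_i^{2k}\bv_i$, so, writing $\bw$ for its normalization,
\[
\|\bA\bw\|_2^2 \;=\; \frac{\sum_i c_i^2 \sigma_i^{4k+2}}{\sum_i c_i^2 \sigma_i^{4k}}.
\]
Split $[n]$ into $L = \{\, i : \sigma_i^2 \ge 0.6\,\sigma_1^2 \,\}$ and its complement. Each term in $L$ contributes at least $0.6\,\sigma_1^2$ times its denominator mass to the numerator, while each $i \notin L$ contributes at most $0.6^{2k}\sigma_1^{4k}$ to the denominator, whose mass from $L$ alone is at least $c_1^2\sigma_1^{4k} \ge \sigma_1^{4k}/n^2$. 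Choosing $k = \Theta(\log n)$ so that $n^2\,0.6^{2k} \le 1/10$ makes the out-of-$L$ contribution negligible, and the ratio comes out at least $0.6\,\sigma_1^2/(1 + 1/10) \ge \sigma_1^2/2 = \lambda(\bA)/2$, with the required $O(\log n)$ iterations.

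I expect the main obstacle to be precisely the absence of an eigenvalue gap: since we only want a factor-$2$ approximation rather than convergence to $\bv_1$ itself, the textbook $(\sigma_2/\sigma_1)^{2k}$-contraction bound does not apply, and one must instead carry out the bookkeeping above while controlling the worst-case overlap $c_1$ with the top \emph{eigenspace} --- which is what forces the (mild) anti-concentration step for the random initialization. Everything else is routine linear algebra.
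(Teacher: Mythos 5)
The paper does not actually prove this fact---it is quoted directly from Theorem~3.1 of the cited reference \citep{blumfoundations}---and your argument is a correct reconstruction of that standard gap-free analysis: the $O(mn)$ cost via $\bbx\mapsto \bA^T(\bA\bbx)$, the split of the spectrum at $0.6\,\sigma_1^2$, the lower bound on the overlap with the top eigenvector, and $k=\Theta(\log n)$ iterations are exactly the ingredients of the textbook proof. The one nit is quantitative: $\Pr\left(|c_1|\ge 1/n\right)\ge 9/10$ fails for small $n$ (e.g.\ $n=4$); the correct spherical anti-concentration bound is $|c_1|\ge \Omega(1/\sqrt{n})$ with probability $9/10$, which is all your denominator estimate needs and only shifts constants in $k$.
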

The following  is a standard  bound on binomial tail.
\begin{lemma}[\cite{okamoto1959some}]\label{lem:hoef}
    Let $H(n,p)$ be a binomial random variable. Then
    \[
        \Pr \left(H(n,p) \geq 2np\right)\leq\exp\left(-np /3\right).
    \]
\end{lemma}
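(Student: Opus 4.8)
The plan is to prove this by the classical exponential-moment (Chernoff) method, with the optimal exponent computed by hand; since $H(n,p)$ is a sum of independent Bernoulli random variables, nothing clever is needed.

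First I would write $H := H(n,p) = \sum_{i=1}^n Y_i$ with $Y_1,\ldots,Y_n$ i.i.d.\ Bernoulli$(p)$, so that $\E[H] = np$. For any $t > 0$, Markov's inequality applied to the random variable $e^{tH}$ gives
\[
\Pr\left(H \geq 2np\right) \leq e^{-2tnp}\,\E\!\left[e^{tH}\right] = e^{-2tnp}\left(1 - p + p e^t\right)^n,
\]
where the equality uses independence of the $Y_i$. Applying the elementary inequality $1 + x \leq e^x$ with $x = p(e^t - 1)$ bounds each factor by $\exp\!\left(p(e^t - 1)\right)$, so that
\[
\Pr\left(H \geq 2np\right) \leq \exp\!\left(np\left(e^t - 1 - 2t\right)\right).
\]

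Next I would optimize the exponent over $t > 0$. The function $g(t) = e^t - 1 - 2t$ has $g'(t) = e^t - 2$ and $g''(t) = e^t > 0$, so it is minimized at $t = \ln 2 > 0$, where $g(\ln 2) = 2 - 1 - 2\ln 2 = 1 - \ln 4$. Substituting $t = \ln 2$ yields $\Pr(H \geq 2np) \leq \exp\!\left(np(1 - \ln 4)\right)$. The proof concludes with the numerical fact that $\ln 4 > 4/3$ — equivalently $e^{4/3} < 4$, which holds since $e^{4/3} < e^{1.34} < 3.83$ — and hence $1 - \ln 4 < -\tfrac{1}{3}$, giving $\Pr(H \geq 2np) \leq \exp(-np/3)$.

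There is essentially no obstacle: the only non-mechanical step is the numerical comparison $\ln 4 > 4/3$, and even that can be sidestepped by instead quoting the standard multiplicative Chernoff bound $\Pr(H \geq (1+\delta)\mu) \leq \big(e^{\delta}/(1+\delta)^{1+\delta}\big)^{\mu}$ with $\delta = 1$ and $\mu = np$, which gives the bound $(e/4)^{np}$ and the same final comparison $e/4 \leq e^{-1/3}$.
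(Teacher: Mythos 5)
Your proof is correct: the paper states this lemma as a standard fact with only a citation to Okamoto (1959) and gives no proof, and your argument is the canonical Chernoff/moment-generating-function derivation. The optimization at $t=\ln 2$ giving exponent $np(1-2\ln 2)\approx -0.386\,np \leq -np/3$ is right, and the fallback via the multiplicative Chernoff bound with $\delta=1$ (using $e/4 \leq e^{-1/3}$) is an equally valid shortcut.
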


\section{Omitted proofs from~\autoref{sec:gd}}\label{sec:gdo}
\begin{proof}[Proof of~\autoref{lem:descent-thm}]
First, suppose that in some iteration $t$ it holds that $\|\bmu - \x_t\| \leq 14000 r_\delta$. Then 
\[
\frac{1}{21} \|\bmu - \x_{t^*}\| \leq d_{t^*} \leq d_t \leq 2 \|\bmu - \x_t \| \leq 28000 r_\delta,
\]
so that we may conclude $\|\bmu - \x_{t^*}\| \leq 588000r_\delta$.
Second, suppose that in all iterations $t$ it holds that $\|\bmu - \x_t\| > 14000 r_\delta$. Then
by the update rule with $\eta = \nicefrac{1}{8000}$, 
\begin{align*}
    \|\x_{t+1} - \bmu \|^2 &= \|\x_{t} - \bmu \|^2 + 2 \eta d_t \ip{\x_{t} - \bmu}{\bg_t} + \eta^2 d_t^2 \|\bg_t\|^2 \\
                           &\leq \|\x_{t} - \bmu \|^2-  \frac{1}{800000} d_t \|\bmu -\x_t\| +
    \frac{1}{16000000}\|\x_{t} - \bmu \|^2 \\
                           &\leq \|\x_{t} - \bmu \|^2-  \frac{1}{1680000} \|\bmu -\x_t\|^2 +
    \frac{1}{16000000}\|\x_{t} - \bmu \|^2 \\
    &= \left(1-\frac{179}{336000000} \right) \|\x_{t} - \bmu \|^2
\end{align*}
Hence, the error bound drops at a geometric rate. The conclusion follows since $\|\bmu - \x_0 \| \leq
O(\sqrt{kd\|\bSig\|/n}) \leq  O(\sqrt{d} r_\delta)$. 
\end{proof}

\section{Omitted proof from~\autoref{sec:inner_max}}\label{sec:oinner_max}

 First recall that \cite{cherapanamjeri2019fast} showed  that the optimal solution to $\mathcal{M}( \bbx_t, \bZ)$  satisfies the
property that $\theta$ is a valid distance estimate (\cref{def:distest}) and $\bw $  a gradient estimate (\cref{def:gradest}).  
\begin{lemma}[Lemma 1 of~\cite{cherapanamjeri2019fast}]\label{lem:exact-dist}
For all $t= 1,2,\cdots, T$, let $d_t=\theta^*$ be  the optimal value of $\mathcal{M}(\x_t, \bZ)$.
Then $\left|d_t - \| \bm{\mu} - \x_t\|\right| \leq 600r_\delta$, so $d_t$ is a distance
estimate  with respect to $\x_t$.
\end{lemma}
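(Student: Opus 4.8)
I would prove the two-sided estimate $\bigl|\theta^* - \|\bmu - \x_t\|\bigr| \le 600 r_\delta$ directly from the Lugosi--Mendelson condition (\autoref{asp:key}), applied \emph{twice} --- once along the optimal direction of $\mathcal{M}(\x_t,\bZ)$ and once along the direction of $\bmu-\x_t$ --- and then deduce that $d_t=\theta^*$ is a distance estimate by plugging the bound into the two cases of \autoref{def:distest}. Throughout, the only analytic input beyond \autoref{asp:key} is Cauchy--Schwarz, $|\ip{\bmu-\x_t}{\bw}|\le\|\bmu-\x_t\|$ for any unit $\bw$.

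\textbf{The two bounds.} For the upper bound, let $(\bw^*,\bb^*,\theta^*)$ be optimal for $\mathcal{M}(\x_t,\bZ)$, so there is a set $S^*\subseteq[k]$ with $|S^*|\ge 0.95k$ and $\ip{\bZ_i-\x_t}{\bw^*}\ge\theta^*$ for all $i\in S^*$. Applying \autoref{asp:key} with $\bv=\bw^*$, at most $0.05k$ indices $i$ satisfy $\ip{\bw^*}{\bZ_i-\bmu}\ge 600r_\delta$; since $0.95k+0.95k>k$, the set $S^*$ must contain some such $i$ with $\ip{\bw^*}{\bZ_i-\bmu}<600r_\delta$, and for that $i$, $\theta^*\le\ip{\bZ_i-\x_t}{\bw^*}=\ip{\bZ_i-\bmu}{\bw^*}+\ip{\bmu-\x_t}{\bw^*}<600r_\delta+\|\bmu-\x_t\|$. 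For the lower bound (assuming $\bmu\ne\x_t$, the degenerate case being immediate), take the test direction $\bw=(\bmu-\x_t)/\|\bmu-\x_t\|$ and apply \autoref{asp:key} with $\bv=-\bw$: at most $0.05k$ indices have $\ip{\bw}{\bZ_i-\bmu}\le -600r_\delta$, so at least $0.95k$ indices satisfy $\ip{\bZ_i-\x_t}{\bw}=\ip{\bZ_i-\bmu}{\bw}+\|\bmu-\x_t\|>\|\bmu-\x_t\|-600r_\delta$. Taking $\bb$ to be the indicator of this set exhibits a feasible point of $\mathcal{M}(\x_t,\bZ)$ of value at least $\|\bmu-\x_t\|-600r_\delta$, hence $\theta^*\ge\|\bmu-\x_t\|-600r_\delta$.

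\textbf{From the estimate to \autoref{def:distest}, and the main difficulty.} With $|d_t-\|\bmu-\x_t\||\le 600r_\delta$: if $\|\bmu-\x_t\|\le 14000r_\delta$ then $d_t\le 14600r_\delta\le 28000r_\delta$, giving part (i); if $\|\bmu-\x_t\|>14000r_\delta$ then $600r_\delta<\|\bmu-\x_t\|/23$, so $d_t<\tfrac{24}{23}\|\bmu-\x_t\|<2\|\bmu-\x_t\|$ and $d_t>\tfrac{22}{23}\|\bmu-\x_t\|\ge\tfrac1{21}\|\bmu-\x_t\|$, giving part (ii). There is no real obstacle here: the argument is elementary, and what needs care is only (a) invoking \autoref{asp:key} in \emph{both} directions $\bw^*$ and $-\bw$ rather than a one-sided version; (b) the pigeonhole step that the $\ge 0.95k$ constraints satisfied in $\mathcal{M}$ overlap the $\ge 0.95k$ well-concentrated bucket means --- which is exactly why the $0.95$ threshold is used, and why the $\le k/200$ contaminated buckets (already absorbed into the $0.05k$ slack of \autoref{asp:key}) do no harm; and (c) bookkeeping the constants so the crude window $\pm 600r_\delta$ fits inside the asymmetric band $[\tfrac1{21},2]$ of the definition. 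Since the constant $600$ is inherited from \autoref{asp:key}, each of these is mechanical; this is precisely Lemma~1 of \cite{cherapanamjeri2019fast} re-derived with the constants fixed in \autoref{sec:prelim}.
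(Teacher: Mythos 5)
Your proposal is correct. The paper itself does not prove \autoref{lem:exact-dist} --- it is imported by citation from \cite{cherapanamjeri2019fast} --- but your derivation is exactly the standard argument, and it matches the technique the paper does spell out for the analogous two-sided statement in \autoref{lem:relax}(i): a pigeonhole intersection of the $\geq 0.95k$ satisfied constraints with the $\geq 0.95k$ well-concentrated directions from \autoref{asp:key} for the upper bound, and exhibiting the feasible point along $(\bmu-\x_t)/\|\bmu-\x_t\|$ for the lower bound. The constant bookkeeping into the $[\tfrac{1}{21},2]$ window of \autoref{def:distest} also checks out.
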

\begin{lemma}[Lemma 2 of~\cite{cherapanamjeri2019fast}]
For all $t= 1,2,\cdots, T$, let $(\theta^*,\bb^*,\bw^*)$ be the optimal solution of
$\mathcal{M}(\x_t , \bZ)$. Then $\bg_t$ is a distance estimate with
respect to $\x_t$.  
\end{lemma}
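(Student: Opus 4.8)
\textbf{Proof plan for the final lemma (Lemma~2 of~\cite{cherapanamjeri2019fast}).}
First, a remark on the statement: the conclusion should read ``$\bg_t := \bw^*$ is a \emph{gradient} estimate with respect to $\x_t$'' --- the distance estimate is already supplied by~\autoref{lem:exact-dist}, so ``distance'' here is a typo for ``gradient''. By~\cref{def:gradest} there is nothing to prove when $\|\bmu - \x_t\| \leq 14000 r_\delta$, so I would assume $\|\bmu - \x_t\| > 14000 r_\delta$. Writing $\bv = (\bmu - \x_t)/\|\bmu - \x_t\|$, the goal is to show $\ip{\bw^*}{\bv} \geq 1 - 1200\, r_\delta/\|\bmu - \x_t\| > \tfrac{1}{200}$, which is precisely~\eqref{cond:grad-est} and also fixes the orientation of $\bw^*$ (so there is no $\pm$ ambiguity in the gradient).

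I would combine two facts. (1) By~\autoref{lem:exact-dist}, the optimal margin satisfies $\theta^* \geq \|\bmu - \x_t\| - 600 r_\delta$; in particular $\theta^* > 0$ in the regime we are in. (2) Applying the Lugosi--Mendelson condition (\autoref{asp:key}) to each of the two unit vectors $\bw^*$ and $-\bw^*$ discards at most $0.05k$ bucket means apiece, so at least $0.9k$ indices $i$ satisfy $|\ip{\bw^*}{\bZ_i - \bmu}| \leq 600 r_\delta$; the at most $k/200$ contaminated buckets are absorbed into this $0.1k$ budget, as~\autoref{asp:key} already allows. Intersecting this set with the $\geq 0.95k$ bucket means selected by $\bb^*$ in the optimal solution of $\mathcal{M}(\x_t,\bZ)$ yields a set of at least $0.85k$ indices $i$ satisfying \emph{both} $\ip{\bZ_i - \x_t}{\bw^*} \geq \theta^*$ and $|\ip{\bZ_i - \bmu}{\bw^*}| \leq 600 r_\delta$; this set is in particular nonempty.

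For any such $i$, splitting $\ip{\bZ_i - \x_t}{\bw^*} = \ip{\bZ_i - \bmu}{\bw^*} + \ip{\bmu - \x_t}{\bw^*}$ and using the two bounds gives
\[
\ip{\bmu - \x_t}{\bw^*} \geq \theta^* - 600 r_\delta \geq \|\bmu - \x_t\| - 1200 r_\delta.
\]
Dividing by $\|\bmu - \x_t\| > 14000 r_\delta$ yields $\ip{\bv}{\bw^*} \geq 1 - \tfrac{1200}{14000} > \tfrac{1}{200}$, which is~\eqref{cond:grad-est} for $\bg_t = \bw^*$. The slack is enormous here; the weak constant $\tfrac{1}{200}$ in~\cref{def:gradest} is there to accommodate the \emph{approximate}, two-sided solver of~\autoref{sec:main-algo}, not this exact argument.

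The proof is short, and the only step needing any care is the counting in (2): one must check that, after deleting both the Lugosi--Mendelson outliers along $\pm\bw^*$ and the contaminated buckets, a strictly positive fraction of the bucket means simultaneously obey the full-margin constraint of $\mathcal{M}(\x_t,\bZ)$ and are well behaved along $\bw^*$. The numerology $0.95 + (1 - 0.1) - 1 = 0.85 > 0$ (with contamination folded into the $0.1$) is exactly what makes the intersection nonempty; I anticipate no other obstacle.
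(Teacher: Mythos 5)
Your proof is correct, and it follows essentially the same argument the paper uses for the bicriteria generalization (\autoref{lem:grad-est}): lower-bound $\theta^*$ via \autoref{lem:exact-dist}, use \autoref{asp:key} plus pigeonhole to find a $\bZ_j$ satisfying both the margin constraint and $\ip{\bZ_j-\bmu}{\bw^*}\leq 600 r_\delta$, then split the inner product and divide by $\|\bmu-\x_t\|>14000r_\delta$. You are also right that ``distance'' is a typo for ``gradient'' in the lemma statement; the paper itself cites this lemma from Cherapanamjeri et al.\ without reproving it, but your argument is exactly the specialization of its bicriteria proof to the exact optimum.
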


We now start by proving a generic claim that any reasonably good \textit{bicriteria approximation} of $\mathcal{M}( \bbx_t, \bZ)$ suffices to provide gradient and distance estimates.

\begin{definition}[bicriteria solution]
Let $ \theta^* $ be the optimal value  of $\mathcal{M}(\bbx, \bZ)$.  We say that $(\theta, \bb,\bw)$ is a $(\alpha,\beta)$-\textit{bicriteria  solution} to $\mathcal{M}(\bbx, \bZ)$ if $\sum_{i} b_i \geq \alpha k$ and $b_i \ip{\bZ_i - \x}{\bw} \geq b_i   \theta$ for all $i$, where $\theta = \beta \theta^*$.
\end{definition}

\begin{lemma}[distance estimate]\label{lem:dist-est}
Let $(\theta, \bb,\bw)$ be a $(1/10, 1/20)$-bicriteria solution to $\mathcal{M}( \bbx_t , \bZ)$.
Then $d_t = \theta  $ is a distance estimate with respect to $\x_t$. 
\end{lemma}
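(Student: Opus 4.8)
The plan is to reduce the whole statement to the exact characterization of the optimal value $\theta^*$ of $\mathcal{M}(\x_t,\bZ)$ given by \ref{lem:exact-dist}, namely $\bigl|\theta^* - \|\bmu-\x_t\|\bigr| \le 600 r_\delta$, and then to chase constants. By the definition of a bicriteria solution, the value of a $(1/10,1/20)$-bicriteria solution is exactly $d_t = \theta = \theta^*/20$, so the claim becomes an elementary comparison between $\theta^*/20$ and $\|\bmu-\x_t\|$ in the two regimes of \autoref{def:distest}. I would note explicitly that the combinatorial constraints $\sum_i b_i \ge k/10$ and $b_i\ip{\bZ_i-\x_t}{\bw}\ge b_i\theta$ play no role in establishing that $d_t$ is a \emph{distance} estimate; only the identity $d_t = \theta^*/20$ is used. (Those constraints are what matter for the companion gradient-estimate statement.)

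Next I would split into the two cases. When $\|\bmu-\x_t\| \le 14000 r_\delta$, plugging $\theta^* \le \|\bmu-\x_t\| + 600 r_\delta$ into $d_t = \theta^*/20$ gives $d_t \le (14000+600)r_\delta/20 = 730 r_\delta$, far below the required $28000 r_\delta$; this case has ample slack. When $\|\bmu-\x_t\| > 14000 r_\delta$, the additive error satisfies $600 r_\delta \le \tfrac{600}{14000}\|\bmu-\x_t\| = \tfrac{3}{70}\|\bmu-\x_t\|$, so \ref{lem:exact-dist} upgrades to the multiplicative sandwich $\tfrac{67}{70}\|\bmu-\x_t\| \le \theta^* \le \tfrac{73}{70}\|\bmu-\x_t\|$. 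Dividing by $20$ yields $\tfrac{67}{1400}\|\bmu-\x_t\| \le d_t \le \tfrac{73}{1400}\|\bmu-\x_t\|$; the upper bound is comfortably below $2\|\bmu-\x_t\|$, and the lower bound clears $\tfrac{1}{21}\|\bmu-\x_t\|$ because $67\cdot 21 = 1407 > 1400$.

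The only delicate point — and the ``main obstacle'' in the sense that it is what pins down the specific numerical constants in \autoref{def:distest} and in the choice of bicriteria parameters — is that the lower bound $1/21$ is essentially tight: the factor-$20$ loss from the bicriteria approximation together with the $600 r_\delta$ additive slack forces the threshold $14000 r_\delta$ to be chosen large enough that $\tfrac{1}{20}\bigl(1-\tfrac{3}{70}\bigr) = \tfrac{67}{1400}$ lands just above $\tfrac{1}{21}$. So in the write-up I would verify this single inequality with care; every other estimate in the proof carries a large margin and needs no delicacy.
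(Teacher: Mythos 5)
Your proposal is correct and follows essentially the same route as the paper's proof: both invoke \autoref{lem:exact-dist} to sandwich $\theta^*$ within $\pm 600 r_\delta$ of $\|\bmu - \x_t\|$, use $d_t = \theta = \theta^*/20$ from the bicriteria definition, and case-split at the threshold $14000 r_\delta$, with identical arithmetic (the paper likewise lands on $\tfrac{67}{1400} > \tfrac{1}{21}$ in the far regime and $730 r_\delta < 28000 r_\delta$ in the near regime). Your added observations — that the combinatorial constraints are irrelevant here and that the $1/21$ bound is the only tight constant — are accurate commentary, not gaps.
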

\begin{proof}[Proof of~\autoref{lem:dist-est}]
 By~\autoref{lem:exact-dist}, the optimal value $\theta^*$ lies in the range \[\left[  \| \bmu -
 \x_t \| - 600r_\delta,   \| \bmu - \x_t \| + 600r_\delta\right].\] Moreover, since $\theta^*/20 \leq \theta \leq \theta^* $, we have that 
 \begin{equation}\label{eqn:dis-ex}
    \frac{\|\bmu - \x\|}{20} - 30r_\delta \leq \theta \leq   \frac{\|\bmu - \x\|}{20} + 30r_\delta.
\end{equation}
\begin{itemize}
    \item  When $ \|\bmu -\x\| \geq 14000r_\delta $, we get from the inequality~\eqref{eqn:dis-ex} that 
\begin{equation*}
    \frac{\|\bmu - \x\|}{21}  \leq \theta \leq   \frac{\|\bmu - \x\|}{19}.
\end{equation*}
\item When  $\|\bmu -\x\| \leq 14000r_\delta$, $\theta \leq 730r_\delta < 28000r_\delta$, again
    by~\eqref{eqn:dis-ex}.
\end{itemize}
\end{proof}
 
\begin{lemma}[gradient  estimate] \label{lem:grad-est}
%Let $\theta \in \left[ \| \bm \mu - \bbx_t\| - 300r^*,  \| \bm \mu - \bbx_t\| + 300r^*\right]  $, and
Let $(\theta, \bb,\bw)$ be a $(1/10, 1/20)$-bicriteria solution to $\mathcal{M}( \bbx_t , \bZ)$.
Then $\bg_t = \bw$ is a gradient estimate with respect to $\x_t$. 
\end{lemma}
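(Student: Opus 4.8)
The plan is to argue that the unit vector $\bw$ produced by a $(1/10,1/20)$-bicriteria solution is automatically pointed in (roughly) the direction $\bmu-\x_t$ whenever $\|\bmu-\x_t\|>14000r_\delta$. The mechanism is a counting overlap: the bicriteria solution places at least a $0.1$-fraction of the bucket means on \emph{one} side of the hyperplane $\bw$ with margin $\theta$, while \autoref{asp:key} (applied with $\bv=\bw$) guarantees that under the projection $\bw$ at most a $0.05$-fraction of the bucket means overshoot $\bmu$ by $600r_\delta$; since $0.1>0.05$, these two index sets must intersect, yielding a bucket mean that is simultaneously far out on the ``$\bw$ side'' of $\x_t$ and genuinely close to $\bmu$ under $\bw$.

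Concretely, I would proceed as follows. First, a $(1/10,1/20)$-bicriteria solution gives at least $0.1k$ indices $i$ with $\ip{\bZ_i-\x_t}{\bw}\ge\theta$, and $\theta\ge\theta^*/20$, where $\theta^*$ is the optimum of $\mathcal{M}(\bbx_t,\bZ)$. By \autoref{asp:key} with $\bv=\bw$, at most $0.05k$ indices satisfy $\ip{\bw}{\bZ_i-\bmu}\ge 600r_\delta$, so there exist indices $i$ (in fact at least $0.05k$ of them) with both $\ip{\bZ_i-\x_t}{\bw}\ge\theta$ and $\ip{\bw}{\bZ_i-\bmu}<600r_\delta$. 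Fixing such an $i$ and writing $\bZ_i-\x_t=(\bZ_i-\bmu)+(\bmu-\x_t)$ gives $\ip{\bmu-\x_t}{\bw}>\theta-600r_\delta$. Next, apply \autoref{lem:exact-dist} to get $\theta^*\ge\|\bmu-\x_t\|-600r_\delta$, hence $\theta\ge\|\bmu-\x_t\|/20-30r_\delta$, so that $\ip{\bmu-\x_t}{\bw}>\|\bmu-\x_t\|/20-630r_\delta$. Dividing by $\|\bmu-\x_t\|$ and invoking the hypothesis $\|\bmu-\x_t\|>14000r_\delta$ yields
\[
\ip{\bw}{\frac{\bmu-\x_t}{\|\bmu-\x_t\|}}>\frac1{20}-\frac{630}{14000}=\frac1{200},
\]
which is exactly the condition in \autoref{def:gradest} for $\bg_t=\bw$ to be a gradient estimate.

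I do not expect a genuine obstacle: the argument is short and the only things that must be \emph{checked} rather than merely computed are the strict inequality $0.1k>0.05k$ that forces the overlap, and the numerical slack $14000>20\cdot 630$ that makes the final constant land at exactly $1/200$. (If one only had an $(\alpha,\beta)$-bicriteria solution with $\alpha\le 0.05$, the overlap step would fail and no correlation bound could be extracted; this is why the descent procedure is designed to demand $\alpha=1/10$.)
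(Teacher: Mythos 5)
Your proof is correct and follows essentially the same route as the paper: find a point $\bZ_j$ in the intersection of the $0.1k$ large-margin indices and the $0.95k$ Lugosi--Mendelson-good indices, decompose $\bZ_j-\x_t=(\bZ_j-\bmu)+(\bmu-\x_t)$, lower-bound $\theta$ via \autoref{lem:exact-dist}, and conclude $\ip{\bg^*}{\bw}\geq 1/20-630/14000=1/200$. The numerical bookkeeping matches the paper's exactly.
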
 

\begin{proof}[Proof of~\autoref{lem:grad-est}]
Let $\bg^* = (\bm{\mu} - \x_t)/ \| \bm{\mu} - \x_t \|$ be the true gradient. We need to show that $\ip{\bg^*}{\bg_t} \geq 1/20$. On the one hand, by~\autoref{lem:exact-dist}, we have 
\begin{equation}\label{eq:dt1}
    d_t = \theta \geq \frac{1}{20} ( \|\bu -\x_t\| - 600r_\delta).
\end{equation}
On the other hand, for  at least $k/10$ points, we have  $\ip{\bZ_i -\x_t}{\bg_t} \geq d_t$
and for at least  $0.95k$ points,
we have $\ip{\bZ_i -\bmu}{\bg_t} \leq 600r_\delta$  by~\autoref{asp:key}.
Hence, there must be a point $\bZ_j$ that satisfies both inequalities, so it follows that
\begin{equation}\label{eq:dt2}
     d_t \leq \ip{\bZ_j - \x_t}{\bg_t}
     = \ip{\bZ_j - \bm{\mu}}{\bg_t} + \ip{\bm{\mu} - \x_t}{\bg_t}
     \leq 600r_\delta + \| \bm{\mu} - \x_t \| \ip{\bg^*}{\bg_t}.
\end{equation}
Using~\eqref{eq:dt1} and~\eqref{eq:dt2} and rearranging,
\[
 \ip{\bg^*}{\bg_t} \geq \frac{1}{20} - \frac{630r_\delta}{\|\bmu -\x_t\|} \geq \frac{1}{200},
\]
where we use   $\|\bmu - \x_t\| \geq 14000r_\delta$.
\end{proof}

Now we show that the optimal solution to the two-sided relaxation give distance and gradient estimate.

\begin{lemma}\label{lem:relax}
Let $(\theta',\bb',\bw')$ be an optimal solution of $\mathcal{M}_2(\x, \bZ)$. We have that 
\begin{enumerate}[(i)]
    \item the value $\theta'$ lies in $\left[  \| \bmu - \x \| - 600r_\delta,   \| \bmu - \x \| + 600r_\delta\right]$; and
    \item one of the following two statements must hold, if $\|\bmu - \x\|\geq 14000r_\delta$:
    \begin{itemize}
        \item  there is a set $\mathcal{C}$ of at least $ 0.9k$ points such that $\ip{\bZ_i - \x}{\bw'} \geq   \theta'  $ for all $i\in \mathcal{C}$; or 
        \item there is a set $\mathcal{C}$ of at least $ 0.9k$ points such that $\ip{\bZ_i - \x}{-\bw'} \geq   \theta'  $ for all $i\in \mathcal{C}$.
\end{itemize}
\end{enumerate}
\end{lemma}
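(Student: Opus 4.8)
The plan is to compare the two-sided relaxation $\mathcal{M}_2(\x,\bZ)$ directly against the one-sided problem $\mathcal{M}(\x,\bZ)$ whose optimum we already understand via~\autoref{lem:exact-dist}. For part (i), the upper bound $\theta' \le \|\bmu - \x\| + 600 r_\delta$ comes from the Lugosi--Mendelson condition: by~\autoref{asp:key} applied to both $\bw'$ and $-\bw'$, at most $0.1k$ of the bucket means can satisfy $|\ip{\bZ_i-\x}{\bw'}| \ge \|\bmu-\x\| + 600r_\delta$, so no feasible $(\theta',\bb',\bw')$ with $\sum b_i \ge 0.95k$ can push $\theta'$ past this value. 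The lower bound $\theta' \ge \|\bmu-\x\| - 600 r_\delta$ is even easier: the one-sided optimal solution $(\theta^*, \bb^*, \bw^*)$ is automatically feasible for $\mathcal{M}_2$ (taking absolute values only weakens the constraints), so $\theta' \ge \theta^* \ge \|\bmu-\x\| - 600r_\delta$ by~\autoref{lem:exact-dist}.

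For part (ii), the idea is a simple pigeonhole/sign argument. The feasible solution $(\theta',\bb',\bw')$ of $\mathcal{M}_2$ gives a set $S = \{i : b_i' = 1\}$ with $|S| \ge 0.95k$, on which $|\ip{\bZ_i - \x}{\bw'}| \ge \theta'$. Partition $S$ into $S^+ = \{i \in S : \ip{\bZ_i-\x}{\bw'} \ge \theta'\}$ and $S^- = \{i \in S : \ip{\bZ_i-\x}{\bw'} \le -\theta'\}$. I want to argue one of these two sets already has size at least $0.9k$. Suppose not, i.e.\ both $|S^+| < 0.9k$ and $|S^-| < 0.9k$; then since $|S^+| + |S^-| = |S| \ge 0.95k$, we get $|S^+| \ge 0.05k$ and $|S^-| \ge 0.05k$. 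Now I use that $\|\bmu - \x\| \ge 14000 r_\delta$ together with part (i), which gives $\theta' \ge \|\bmu-\x\| - 600r_\delta > 13000 r_\delta$. For $i \in S^-$ we have $\ip{\bZ_i - \bmu}{\bw'} \le -\theta' + \ip{\bmu - \x}{\bw'} \le -\theta' + \|\bmu-\x\|$; I need to rule out that $0.05k$ bucket means lie this far below $\bmu$ along $\bw'$. Apply~\autoref{asp:key} to the direction $-\bw'$: at most $0.05k$ indices have $\ip{-\bw'}{\bZ_i - \bmu} \ge 600 r_\delta$, i.e.\ $\ip{\bZ_i - \bmu}{\bw'} \le -600 r_\delta$. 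So if $-\theta' + \|\bmu-\x\| \le -600r_\delta$ — equivalently $\theta' \ge \|\bmu-\x\| + 600r_\delta$ — we would be forced into $|S^-| \le 0.05k$, but this threshold on $\theta'$ contradicts part (i) in general. The correct move is: whichever of $\bw'$, $-\bw'$ is ``aligned'' with $\bmu - \x$ (say $\ip{\bmu - \x}{\bw'} \ge 0$ WLOG, the other case symmetric), the points in $S^-$ have $\ip{\bZ_i - \bmu}{\bw'} \le -\theta' + \ip{\bmu-\x}{\bw'}$; combined with $\ip{\bZ_i - \bmu}{\bw'} \ge -600 r_\delta$ for all but $0.05k$ indices, the $0.05k$ ``good'' points of $S^-$ must satisfy $-600 r_\delta \le -\theta' + \ip{\bmu-\x}{\bw'} \le -\theta' + \|\bmu - \x\| \le 600 r_\delta$, forcing $\theta' \le \|\bmu-\x\| + 600r_\delta$ (fine) but also $\ip{\bmu-\x}{\bw'} \ge \theta' - 600r_\delta \ge \|\bmu-\x\| - 1200 r_\delta$, which since $\|\bmu-\x\| \ge 14000r_\delta$ means $\bw'$ is nearly parallel to $\bmu-\x$. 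The same alignment argument applied to $S^+$ (whose good points satisfy $\ip{\bmu-\x}{\bw'} \le 600 r_\delta + \theta'$ trivially but whose existence with $|S^+|\ge 0.05k$ is automatic) does not create a contradiction — so instead I should directly show $S^-$ is small: since $\bw'$ is nearly parallel to $\bmu - \x$, actually no, let me take the cleanest route.

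The cleanest route: show that the two-sided optimal $\bw'$ must have $\ip{\bmu-\x}{\bw'}$ close to $\pm\|\bmu-\x\|$, and then one of the two sides is essentially empty. Concretely, WLOG $\ip{\bmu-\x}{\bw'} \ge 0$. By part (i), $\theta' \ge \|\bmu - \x\| - 600 r_\delta$. For each $i$ with $b_i' = 1$, either $\ip{\bZ_i-\x}{\bw'} \ge \theta'$ or $\le -\theta'$; in the latter case $\ip{\bZ_i - \bmu}{\bw'} \le -\theta' + \ip{\bmu-\x}{\bw'} \le -\theta' + \|\bmu-\x\| \le 600 r_\delta$, which is the ``typical'' regime and gives no immediate bound. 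So instead bound $|S^-|$ by applying~\autoref{asp:key} to $+\bw'$: we need $\ip{\bZ_i - \bmu}{\bw'}$ to be \emph{large and positive} for most $i$, which requires $\ip{\bmu - \x}{\bw'}$ large. This is where part (i) is used again as a lower bound combined with a counting over $S^+$: since $|S^+| \ge 0.95k - |S^-|$ and for $i \in S^+$, $\ip{\bZ_i - \bmu}{\bw'} \ge \theta' - \|\bmu-\x\| \cdot(1 - \ip{\hat u}{\bw'})$ is not directly controlled. I will handle this by noting~\autoref{lem:exact-dist}'s proof technique shows the one-sided margin along $\hat u = (\bmu-\x)/\|\bmu-\x\|$ is itself $\ge \|\bmu - \x\| - 600 r_\delta$, hence $\theta^* \ge \theta'$ is impossible unless... — actually the simplest correct statement is that $|S^-| \le 0.1k$ by a direct application of~\autoref{asp:key} to $-\bw'$ using $\theta' > 600 r_\delta$: for $i \in S^-$ with $b_i'=1$, if additionally $\ip{\bmu-\x}{\bw'} \le \theta' - 600 r_\delta$ then $\ip{-\bw'}{\bZ_i - \bmu} = -\ip{\bZ_i-\x}{\bw'} + \ip{\bmu-\x}{\bw'} \ge \theta' - (\theta' - 600r_\delta) \cdot$ hmm. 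I expect \textbf{the main obstacle} to be pinning down exactly which of $\bw'$ or $-\bw'$ to use and getting the constants $0.9k$, $600 r_\delta$, $14000 r_\delta$ to line up; the combinatorial skeleton (pigeonhole on the two sides, then kill the minority side using~\autoref{asp:key} in the appropriate direction together with the gap $\|\bmu-\x\| \gg r_\delta$) is routine, but the bookkeeping requires care. I would organize the final writeup as: (a) prove (i) by sandwiching $\theta'$ between $\theta^*$ and the LM bound; (b) WLOG fix the sign of $\ip{\bmu-\x}{\bw'}$; (c) show the ``wrong-sign'' set $S^-$ has size $\le 0.05k$ by applying~\autoref{asp:key} to $-\bw'$ and using $\theta' \ge \|\bmu-\x\| - 600 r_\delta \ge 13000 r_\delta$, which forces $\ip{\bmu-\x}{\bw'}$ to exceed $\theta' - 600r_\delta$ hence makes wrong-sign points violate the LM tail bound; (d) conclude $|S^+| \ge 0.95k - 0.05k = 0.9k$, giving the first bullet (and the symmetric case gives the second).
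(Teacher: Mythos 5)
Your final plan (a)--(d) is correct and is essentially the paper's own proof: part (i) sandwiches $\theta'$ between the one-sided optimum of~\autoref{lem:exact-dist} and the Lugosi--Mendelson upper bound, and part (ii) fixes the sign of $\ip{\bmu-\x}{\bw'}$ WLOG and kills the wrong-sign set $S^-$ by applying~\autoref{asp:key} in the direction $-\bw'$ together with $\theta'\geq\|\bmu-\x\|-600r_\delta\gg 600r_\delta$, leaving $|S^+|\geq 0.95k-0.05k=0.9k$. The meandering middle contains several false starts (the symmetric pigeonhole, the ``forces $\ip{\bmu-\x}{\bw'}$ to exceed $\theta'-600r_\delta$'' detour), but you correctly discard them, and the argument you settle on is sound.
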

\begin{proof}[Proof of~\autoref{lem:relax}]
Let $ \theta $ be the optimal value of $\mathcal{M}(\x, \bZ)$. To prove (i), first recall that~\autoref{lem:exact-dist} states that  $\theta \geq \|\bmu - \x \|- 600r_\delta$. Therefore, we get that $\theta' \geq \|\bmu - \x \|- 600r_\delta$, as $\theta' \geq \theta$.  For the upper bound, assume for the sake of a contradiction that $\theta'  >  \|\bmu - \x \| + 600r_\delta$. Then one side of the hyperplane defined by $\bw '$ must contain at least $19/40$ fraction of points, so let's suppose without loss of generality that 
\begin{equation}\label{eq:con1}
    \ip{\bZ_i - \x}{\bw'} \geq  \theta' >  \|\bmu - \x \| + 600r_\delta
\end{equation}
for at least $19k/40$ $\bZ_i$'s. Also, note that 
\begin{equation}\label{eq:con2}
     \ip{\bZ_i - \x}{\bw'}  = \ip{\bZ_i -\bmu}{\bw'} + \ip{\bmu - \x}{\bw'} \leq \|\bmu -\x\| + \ip{\bZ_i -\bmu}{\bw'}.
\end{equation}
Combining~\eqref{eq:con1} and~\eqref{eq:con2}, it follows that for at least $19k/40$ $\bZ_i$'s we have  
\begin{equation}\label{eq:conclu}
    \ip{\bZ_i - \bmu}{\bw'} > 600r_\delta.
\end{equation}
On the other hand, consider projections of all bucket means $\bZ_i $ onto $\bw'$. \autoref{asp:key} implies that 
\begin{equation*}
   \left | \left\{ i :    \ip{\bw'}{\bZ_i  } - \ip{\bw'}{\bmu  }    \geq 600r_\delta)\right\}  \right|  \leq 0.05k.
\end{equation*}
This means that at most $k/20$ points satisfy $\ip{\bZ_i-\bmu}{\bw'} \geq 600r_\delta$, contradicting~\eqref{eq:conclu}.

To prove (ii), let $S^+ = \{ i: \ip{\bZ_i - \x}{\bw'} \geq   \theta' \}$ and $S^- = \{ i: \ip{\bZ_i - \x}{-\bw'} \geq   \theta' \}$. Notice that since $\|\bmu - \x\| \geq 14000r_\delta$, 
$S^+$ and $S^-$ are disjoint.   Now let
\begin{equation*}
    B = \left\{ i :   | \ip{\bw'}{\bZ_i -\bmu}|     \leq 600 r_\delta \right\} = \left\{ i :   | \ip{\bw'}{\bZ_i -\x}  -  \ip{\bw'}{\bmu -\x}|     \leq 600 r_\delta \right\}.
\end{equation*}
By~\autoref{asp:key}, $|B| \geq 19k/20$. Consider the two cases.
\begin{itemize}
    \item If $ \ip{\bw'}{\bmu -\x} \geq 0 $, observe that $B$ must intersect $S^+$ but not $S^-$.  This implies that $|S^-| \leq k/20$, so $|S^+| \geq  9k/10$, since $|S^+|+|S^-|=19k/20$ and they are disjoint.
    \item If $ \ip{\bw'}{\bmu -\x} < 0 $, by the same argument, we have $|S^-| \geq 9k/10$. 
\end{itemize}
\end{proof}

Next, we show that approximating $\mathcal{M}_2$ in a bicriteria manner 
achieves a similar guarantee.%
\begin{lemma}\label{lem:biM}
Let $\theta^*$ be the optimal value of $\mathcal{M}(\x, \bZ)$ and $\bw'$ 
be a unit vector such that for at least $k/8$ of the $\bZ_i$, we have $|  \ip{\bw'}{\bZ_i -\x}  | \geq \theta'$, %\TODO{Here $\bmu$ is $\bZ_i$?}
where $\theta '=  0.1\theta^*$. One of the following two statements must hold if $\|\bmu - \x\|\geq 14000r_\delta$.
    \begin{itemize}
        \item  there is a set $\mathcal{C}$ of at least $0.95k$ points such that $\ip{\bZ_i - \x}{\bw'} \geq   \theta' -600r_\delta $ for all $i\in \mathcal{C}$; 
        \item there is a set $\mathcal{C}$ of at least $ 0.95k$ points such that $\ip{\bZ_i - \x}{-\bw'} \geq   \theta' -600r_\delta $ for all $i\in \mathcal{C}$.
    \end{itemize}
\end{lemma}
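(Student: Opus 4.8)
The plan is to mimic the argument for part (ii) of \autoref{lem:relax}, but to track the $600r_\delta$ slack that arises because $\bw'$ now only satisfies a two-sided margin bound on $k/8$ points rather than being the exact optimum. First I would recall from \autoref{lem:exact-dist} that $\theta^* \geq \|\bmu - \x\| - 600r_\delta \geq 13400 r_\delta$ (using $\|\bmu-\x\|\geq 14000r_\delta$), so in particular $\theta' = 0.1\theta^* \geq 1340 r_\delta$ is comfortably larger than $600r_\delta$; this ensures the two candidate ``large-margin'' sets $S^+ = \{i : \ip{\bZ_i - \x}{\bw'} \geq \theta'\}$ and $S^- = \{i : \ip{\bZ_i - \x}{-\bw'} \geq \theta'\}$ are disjoint, by the same reasoning as in \autoref{lem:relax}(ii).

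Next I would introduce the ``close to the mean along $\bw'$'' set $B = \{i : |\ip{\bw'}{\bZ_i - \bmu}| \leq 600 r_\delta\}$, which by \autoref{asp:key} (applied to both $\bw'$ and $-\bw'$) satisfies $|B| \geq 19k/20$. The decomposition $\ip{\bZ_i - \x}{\bw'} = \ip{\bZ_i - \bmu}{\bw'} + \ip{\bmu - \x}{\bw'}$ shows that for $i \in B$ the value of $\ip{\bZ_i-\x}{\bw'}$ lies within $600r_\delta$ of the fixed number $\ip{\bmu-\x}{\bw'}$. I would then split into the two cases $\ip{\bw'}{\bmu - \x} \geq 0$ and $\ip{\bw'}{\bmu - \x} < 0$. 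In the first case, for every $i \in B$ we have $\ip{\bZ_i - \x}{\bw'} \geq \ip{\bmu-\x}{\bw'} - 600r_\delta \geq -600r_\delta > -\theta'$, so $B \cap S^- = \emptyset$, giving $|S^-| \leq k/20$. Since by hypothesis $|S^+| + |S^-| = |S^+ \cup S^-| \geq k/8 > k/20$ (the union is disjoint), we get $|S^+| \geq k/8 - k/20 > 0$; more importantly, I want the stronger conclusion, so I would instead argue on $\mathcal C := B \cap \{i : \ip{\bZ_i - \x}{-\bw'} < \theta'\}$.

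Here is the sharper move that yields the $0.95k$ bound: in the case $\ip{\bw'}{\bmu-\x}\geq 0$, take $\mathcal C = B$. For each $i \in \mathcal C$ we have $\ip{\bZ_i - \x}{\bw'} = \ip{\bZ_i - \bmu}{\bw'} + \ip{\bmu-\x}{\bw'} \geq -600r_\delta + \ip{\bmu-\x}{\bw'}$. To finish I need $\ip{\bmu-\x}{\bw'} \geq \theta'$; this follows because $B$ meets $S^+$ (as $|S^+| = |S^+\cup S^-| - |S^-| \geq k/8 - k/20 > k/20 \geq k - |B|$, so $B \cap S^+ \neq \emptyset$), and for any $j \in B \cap S^+$ we have $\theta' \leq \ip{\bZ_j - \x}{\bw'} \leq \ip{\bmu-\x}{\bw'} + 600r_\delta$, hence $\ip{\bmu - \x}{\bw'} \geq \theta' - 600r_\delta$. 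Plugging back in, $\ip{\bZ_i-\x}{\bw'} \geq \theta' - 1200r_\delta$ for all $i \in \mathcal C$, with $|\mathcal C| = |B| \geq 19k/20 \geq 0.95k$. The symmetric case $\ip{\bw'}{\bmu-\x}<0$ gives the $-\bw'$ statement.

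The main obstacle is that this direct argument produces slack $1200r_\delta$ rather than the $600r_\delta$ claimed in the statement; I would resolve this by being more careful about which inequality absorbs the slack — specifically, noting that for $i \in B \cap S^+$ one already has $\ip{\bZ_i-\x}{\bw'}\geq\theta'$ exactly, and that the points in $B \setminus S^+$ can be shown (using that they are outside $S^-$ and within $600r_\delta$ of $\ip{\bmu-\x}{\bw'}$, together with the fact that $\ip{\bmu-\x}{\bw'}$ itself is within $600r_\delta$ of some point of $S^+$) to satisfy $\ip{\bZ_i-\x}{\bw'} \geq \theta' - 600r_\delta$ directly, since $\ip{\bmu - \x}{\bw'}\geq \theta'$ whenever $S^+$ contains an uncontaminated bucket mean with $\ip{\bZ_j-\bmu}{\bw'}\leq 0$, which a counting argument over $B\cap S^+$ guarantees. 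Alternatively, if the constant genuinely needs adjusting, one simply notes the downstream application (\autoref{cor:estimate}, \autoref{lem:grad-est}) tolerates a constant-factor loss, so replacing $600r_\delta$ by $1200r_\delta$ throughout is harmless; I would flag this and keep the cleaner $1200r_\delta$ derivation as the core argument.
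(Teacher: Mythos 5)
Your proposal follows essentially the same route as the paper's proof: apply \autoref{asp:key} to get a large set $B$ concentrated around $\ip{\bmu-\x}{\bw'}$, use pigeonhole to find a point in both $B$ and the large-margin set, use $\theta'\geq 1340r_\delta \gg 600r_\delta$ to pin down the sign, and conclude that all of $B$ clears the margin in that direction. Your observation about the constant is a fair catch: the paper's own proof asserts that the points of $\mathcal{C}$ ``cluster around $\bZ_j$ by an additive factor of $600r_\delta$,'' but the triangle inequality through $\bmu$ really gives $1200r_\delta$, so the paper's argument, read literally, also only yields $\theta'-1200r_\delta$. Your attempted ``sharper move'' to recover $600r_\delta$ does not go through --- nothing guarantees a point $j\in S^+\cap B$ with $\ip{\bZ_j-\bmu}{\bw'}\leq 0$, since all such points could have projections in $(0,600r_\delta]$ --- but your fallback is the right call: the $1200r_\delta$ slack is absorbed downstream by the generous constants (one should just re-check the final step of \autoref{lem:grad-est}, where $1/20 - 630r_\delta/\|\bmu-\x_t\|\geq 1/200$ is fairly tight, and bump the $14000r_\delta$ threshold if needed).
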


\begin{proof}[Proof of~\autoref{lem:biM}]
Let $\mathcal{C} = \{ i :  |  \ip{\bw'}{\bZ_i - \bmu }| \leq 600r_\delta \}$ be the set of ``good'' points with respect to direction $\bw'$. 
By~\autoref{asp:key}, $|\mathcal{C}|\geq 19k/20$. Further, let $S = \{| \ip{\bw'}{\bZ_i -\x}  | \geq \theta'\} $, which we assume has size at least $k/8$. Thus, by pigeonhole principle, there must be a point, say $\bZ_j$, that is in both sets.  There are two cases. 
\begin{itemize}
    \item Suppose $\ip{\bw'}{ \bmu  -\x} \geq 0$. Since $j\in S$ and $\theta^* \geq 13400r_\delta$ by~\autoref{lem:relax},   we have $| \ip{\bw'}{\bZ_i -\x}  | \geq  1340r_\delta$. On the other hand,  since $j \in \mathcal{C}$,
    \begin{equation}
        \left|  \ip{\bw'}{\bZ_j - \bmu }\right|  = \left|  \ip{\bw'}{\bZ_j - \x }-    \ip{\bw'}{\bmu- \x }\right|\leq 600r_\delta.
    \end{equation} 
    Hence, we observe that $\ip{\bw'}{\bZ_j -\x}    \geq \theta' \geq 1340r_\delta$. By definition of $\mathcal{C}$, all its points cluster around $\bZ_j$ by an additive factor of $600r_\delta$. 
    \item Suppose $\ip{\bw'}{ \bmu  -\x} \leq 0$. We get the second case in the claim by the same argument.
\end{itemize}
\end{proof}

Finally, we are ready to prove~\autoref{cor:estimate}.
 
\begin{proof}[Proof of~\autoref{cor:estimate}]
    Let's first check the distance estimate (\autoref{def:distest}) guarantee. 
\begin{itemize}
    \item If $\|\bmu -\x\|\geq 14000r_\delta$,  we have
\begin{equation*}
    \theta'  \geq \frac{1}{10}  \| \bmu - \x \| - 60r_\delta \geq \frac{2}{35}\|\bmu - \x\|,
\end{equation*}
since $\theta'  = 0.1\theta^*$ and $\theta^* \geq \| \bmu - \x \| - 600r_\delta$.
The upper bound of~\eqref{cond:dist-est} obviously holds.
\item If   $\|\bmu -\x\|\leq 14000r_\delta$, we have $\theta' \leq 1460r_\delta$
    by~\autoref{lem:relax}.
\end{itemize} 
For gradient estimate, we appeal to~\autoref{lem:biM} and get that if $\|\bmu -\x\|\geq 14000r_\delta$,  then either  $(\theta', \bb',\bw')$ or $(\theta', \bb',-\bw')$ is a $(19/20, 1/20)$-bicriteria approximation of $\mathcal{M}(\x,\bZ)$, where $\bb'$ is the indicator vector of $\mathcal{C}$. Thus, we can apply~\autoref{lem:grad-est}, and this completes the proof.
\end{proof}

\section{Omitted proof from~\autoref{sec:prune}} \label{apx:prune}

We remark that under Lugosi-Mendelson condition, the assumption $\|\bmu - \x_0\| \lesssim \sqrt{ kd \|\bSig\|/n}  $ can be easily achieved by
initializing $\x_0$ to be the coordinate-wise median-of-means (\autoref{lem:gm}) (with a failure
probability at most $\delta/8$).
\begin{lemma}[pruning] \label{lem:prune}
Let $\beta =600\sqrt{ kd \|\bSig\|/n}$, and suppose $\|\bmu - \x_0\|
\leq \beta$. Given the bucket means $\bZ \in \R^{k\times d}$ such that at most $k/200$ points are contaminated, the
algorithm \textsc{Prune} removes $k/10$ of the points and guarantees that with probability at least
$1-\delta/8$, among the remaining data, $$\max_i \|\bZ_i - \bm \mu\| \leq O(\beta ).$$
Further, \textsc{Prune}$(\bZ, \x_0)$ can be implemented in $\widetilde O (kd)$ time.
\end{lemma}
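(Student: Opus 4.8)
The plan is to show that, with probability at least $1-\delta/8$, fewer than $k/10$ of the bucket means lie outside the ball of radius $2\beta$ around $\x_0$; since \textsc{Prune} deletes exactly the $k/10$ points farthest from $\x_0$, every surviving $\bZ_i$ then has $\|\bZ_i - \x_0\| \le 2\beta$, hence $\|\bZ_i - \bmu\| \le 3\beta = O(\beta)$ by the triangle inequality and the hypothesis $\|\bmu - \x_0\| \le \beta$.

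First I would control a single uncontaminated bucket mean. Since $\bZ_i$ is the average of $m = \Theta(n/k)$ i.i.d.\ copies of $\bX$, we have $\E\bZ_i = \bmu$ and $\E\|\bZ_i - \bmu\|^2 = \Tr(\bSig)/m = \Theta(k\Tr(\bSig)/n)$; as $\Tr(\bSig) \le d\|\bSig\|$ and $\beta^2 = 600^2\,kd\|\bSig\|/n$, this is at most $\beta^2/180000$, so Markov's inequality gives $\Pr(\|\bZ_i - \bmu\| > \beta) \le 1/180000 =: p_0$ for every uncontaminated index $i$.

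Next I would pass to the count $N := |\{\, i \text{ uncontaminated} : \|\bZ_i - \bmu\| > \beta \,\}|$. The uncontaminated bucket means are built from disjoint blocks of the sample, hence mutually independent, so $N$ is stochastically dominated by $\mathrm{Bin}(k, p_0)$, and a fortiori by $\mathrm{Bin}(k, \tfrac{1}{1000})$. Applying \autoref{lem:hoef} with $n = k$ and $p = \tfrac{1}{1000}$ gives $\Pr(N \ge k/500) \le \exp(-k/3000) \le \delta/8$, using $k \ge 3200\log(8/\delta)$. On the complementary event, counting all at most $k/200$ contaminated buckets pessimistically, at most $k/500 + k/200 < k/10$ of the $\bZ_i$ have $\|\bZ_i - \bmu\| > \beta$; moreover any $\bZ_i$ with $\|\bZ_i - \x_0\| > 2\beta$ must be one of these, since an uncontaminated $\bZ_i$ with $\|\bZ_i - \bmu\| \le \beta$ satisfies $\|\bZ_i - \x_0\| \le \|\bZ_i - \bmu\| + \|\bmu - \x_0\| \le 2\beta$. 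Thus fewer than $k/10$ points have distance exceeding $2\beta$ from $\x_0$, and since these are exactly the points with the largest distances to $\x_0$, all of them fall among the $k/10$ deleted by \textsc{Prune}; every remaining $\bZ_i$ therefore has $\|\bZ_i - \bmu\| \le 3\beta$. The runtime is immediate: computing the $k$ distances $d_i = \|\bZ_i - \x_0\|$ costs $O(kd)$ and sorting them costs $O(k\log k)$, for a total of $\widetilde O(kd)$.

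The hard part — really the only non-mechanical point — is the concentration step. The crude Markov bound makes $p_0 \approx 10^{-5}$ so tiny that \autoref{lem:hoef} applied literally with $p = p_0$ is vacuous (its exponent $kp_0/3$ is only $o(\log(1/\delta))$). The fix is to invoke the tail bound with an inflated \emph{nominal} Bernoulli parameter $\tfrac{1}{1000} \gg p_0$, so the deviation level $2k \cdot \tfrac{1}{1000} = k/500$ stays well below $k/10$ while $\exp(-k/3000)$ still beats $\delta/8$. Everything else is bookkeeping to keep the constant fractions ($k/200$ contaminated, $< k/500$ far, $k/10$ removed, $\ge 9k/10$ retained) mutually consistent, in particular matching the requirement $k' \ge 0.9k$ used later in \autoref{ass:scale}.
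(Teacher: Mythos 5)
Your proof is correct and follows essentially the same route as the paper's: Chebyshev on each uncontaminated bucket mean to get a per-bucket failure probability of at most $1/1000$, the binomial tail bound (\autoref{lem:hoef}) with $p=1/1000$ to control the number of far uncontaminated buckets, a crude count of the $\le k/200$ contaminated buckets, and the triangle-inequality/ordering argument showing all far points land among the $k/10$ deleted. The only differences are cosmetic — you count points far from $\x_0$ directly rather than introducing the paper's $\mathcal S_{\textsf{good}}/\mathcal S_{\textsf{bad}}$ sets and pigeonhole, and you obtain a slightly sharper final constant ($3\beta$ versus $20\beta$), both of which are immaterial to the $O(\beta)$ conclusion.
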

\begin{proof}[Proof of~\autoref{lem:prune}]
For correctness, consider $\|\bZ_i -\bmu\|$, and by triangle inequality,
\begin{equation*}
    \|\bZ_i -\x_0\| - \|\bmu - \x_0\| \leq \| \bZ_i - \bmu \| \leq \|\bZ_i -\x_0\| + \|\bmu -
    \x_0\|.
\end{equation*}
Since $\| \bmu -\x_0 \| \leq \beta$ by our assumption, 
\begin{equation}\label{eqn:badgood}
    \|\bZ_i -\x_0\| - \beta \leq \| \bZ_i - \bmu \| \leq \|\bZ_i -\x_0 \| + \beta.
\end{equation}
Let $\mathcal S_\textsf{good} = \{ i : \|\bZ_i - \bmu \| \leq \beta\}$ and
$\mathcal{S}_\textsf{bad} = \{ i : \|\bZ_i - \bmu \| \geq 20\beta\} $. It suffices to show that
with probability at least $1-\delta/8$  all the points in $\mathcal S_{\textsf{bad}}$ are removed.  
We first lower bound the number of good points.
Each uncontaminated $\bZ_i$ is an average of $\lfloor n/k\rfloor $ i.i.d.\ random vectors.
Applying~\autoref{lem:avg} on estimation error of empirical mean, we obtain that for each uncontaminated $i$, 
with probability at
least $1-{1}/{1000}$,
\begin{equation*}
    \| \bZ_i - \bmu \| \leq \sqrt{ 1000\cdot {\Tr(\bSig) k}/{n} } \leq \beta.
\end{equation*}
Therefore, each
uncontaminated $\bZ_i$ is in $\mathcal S_{\textsf{good}}$ with probability at least $1-{1}/{1000}$.
Let $H$ be the number of uncontaminated points not in $\mathcal S_{\textsf{good}}$ and $p={1}/{1000}$.
Since there are at most $k/200$ contaminated points and each uncontaminated point is independent, by a binomial tail bound
(\autoref{lem:hoef})
\begin{align}
    \Pr \left(H \leq 2p \cdot (199/200)k\right)  &\geq 1- \exp \left(-p \cdot (199/200)k /3 \right)\nonumber \\
    &\geq  1-\exp\left(-\log \left(8/\delta\right)\right)\nonumber\\
    &= 1- \delta / 8\nonumber,
\end{align}
where we used $k= \lceil 3600\log (8/\delta)\rceil$. Hence, with probability at least $1-\delta/8$, $\mathcal
S_\textsf{good}$ contains
at least $({399}/{400})k$ (uncontaminated) points.  We condition on this event for the rest of the proof.

Now observe that \begin{equation}\label{eq:gbad}
    \|\bZ_i -\x_0 \|  < \|\bZ_j -\x_0 \| \,\,\,\,\, \text{for each $j\in \mathcal S_{\textsf{bad}}$ and
$i \in  \mathcal S_{\textsf{good}}$}
\end{equation}
by~\eqref{eqn:badgood}. Suppose for a contradiction
that $j\in \mathcal S_{\textsf{bad}} $ is not removed by line 4. Then it means that  $d_j \leq
d_i$ for $k/10$ of the $\bZ_i'$'s. By pigeonhole principle, this implies $d_j \leq d_i$ for some $i
\in  \mathcal S_{\textsf{good}}$, since $|S_\textsf{good}| \geq (399/400)k$. This contradicts
condition~\eqref{eq:gbad}.

Computing the distances takes $O(kd)$ time and sorting takes $O(k\log k)$ time. Thus, the algorithm \textsc{Prune} runs in time $O(kd + k \log k)$ and succeeds with probability at least $1-\delta/8$.  
\end{proof}

Pruning allows us to bound the norms of  the points $\bZ_i - \x_t$ for each iteration $t$.  
 
\begin{corollary}[scaling and margin]\label{cor:scale}
    Suppose $\|\bmu - \x\|\leq O \left(\sqrt{ {\|\bSig\| kd}/{n} }\right)$ and $\|\bmu - \x\|\geq
    \Omega\left(r_\delta\right)$. 
    Let $\mathcal S$ be the pruned dataset of size  $k'\geq 9k/10$
such that $\|\bZ_i-\bmu \| \leq  O \left(\sqrt{ {\|\bSig\| kd}/{n} }\right)$ for each
$i\in\mathcal S$. 
    There exists a   scaling factor $B$,  $\theta>0$ and unit vector $\bw$ such that for at least
    $4k/5$  points in $\mathcal S$,  
    \[
    \left|\ip{\tfrac{1}{B} (\bZ_i - \x)}{\bw}\right| \geq   \theta.
    \]
%\end{enumerate}  
Further, we have that $1/\theta^2 = O(d)$.  %The procedure can be implemented in time $\widetilde O(kd)$.
\end{corollary}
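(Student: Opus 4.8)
The plan is to take $\bw$ to be the unit vector pointing from $\x$ toward the true mean, $\bw = (\bmu - \x)/\|\bmu - \x\|$, and to show that along this direction the pruned bucket means stay well-separated from $\x$ even after rescaling. The scaling factor will simply be $B = \max_{i \in \mathcal S}\|\bZ_i - \x\|$, so that $\|\tfrac1B(\bZ_i - \x)\| \le 1$ automatically.

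First I would apply the Lugosi-Mendelson condition (\autoref{asp:key}) to both $\bw$ and $-\bw$: this bounds by $0.05k + 0.05k = 0.1k$ the number of indices $i \in [k]$ with $|\ip{\bw}{\bZ_i - \bmu}| > 600 r_\delta$. Since the pruned set $\mathcal S$ has $|\mathcal S| \geq 9k/10$, at least $9k/10 - k/10 = 4k/5$ of its points satisfy $|\ip{\bw}{\bZ_i - \bmu}| \leq 600 r_\delta$. For each such point I would write $\ip{\bw}{\bZ_i - \x} = \ip{\bw}{\bZ_i - \bmu} + \ip{\bw}{\bmu - \x} = \ip{\bw}{\bZ_i - \bmu} + \|\bmu - \x\|$, so that $|\ip{\bw}{\bZ_i - \x}| \geq \|\bmu - \x\| - 600 r_\delta$; choosing the constant hidden in the hypothesis $\|\bmu - \x\| \geq \Omega(r_\delta)$ large enough makes this at least $\tfrac12 \|\bmu - \x\| > 0$. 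Then I would set $\theta = (\|\bmu - \x\| - 600 r_\delta)/B > 0$; dividing the previous inequality by $B$ gives $|\ip{\frac{1}{B}(\bZ_i - \x)}{\bw}| \geq \theta$ for at least $4k/5$ points of $\mathcal S$, which is the claimed margin statement.

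It then remains to verify $1/\theta^2 = B^2/(\|\bmu - \x\| - 600 r_\delta)^2 = O(d)$. For the numerator, the triangle inequality combined with the two hypotheses $\|\bZ_i - \bmu\| \leq O(\sqrt{\|\bSig\| kd/n})$ (for $i\in\mathcal S$) and $\|\bmu - \x\| \leq O(\sqrt{\|\bSig\| kd/n})$ yields $B^2 = O(\|\bSig\| kd/n)$. For the denominator, $(\|\bmu - \x\| - 600 r_\delta)^2 = \Omega(r_\delta^2)$ by the previous paragraph, and $r_\delta^2 \geq \Tr(\bSig)/n + \|\bSig\|\log(1/\delta)/n \geq \Omega(\|\bSig\| k/n)$, using $\Tr(\bSig)\geq\|\bSig\|$ and $k \lesssim 1 + \log(1/\delta)$. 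Dividing, $1/\theta^2 = O(\|\bSig\| kd/n)\big/\Omega(\|\bSig\| k/n) = O(d)$.

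The hard part, to the extent there is one, is purely bookkeeping: I must make sure the constant implicit in $\|\bmu - \x\| \geq \Omega(r_\delta)$ comfortably dominates the $600 r_\delta$ slack coming from \autoref{asp:key}, and I must remember to use $\Tr(\bSig) \geq \|\bSig\|$ together with $k = \Theta(\log(1/\delta))$ in order to turn the crude estimate $B^2 = O(\|\bSig\| kd/n)$ into the clean bound $1/\theta^2 = O(d)$ rather than $O(kd)$. Beyond that, everything reduces to two applications of the Lugosi-Mendelson condition and the triangle inequality.
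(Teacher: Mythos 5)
Your proposal is correct and follows essentially the same route as the paper: scale by $B=\max_{i\in\mathcal S}\|\bZ_i-\x\|=O(\sqrt{\|\bSig\|kd/n})$, exhibit a witness direction with margin $\Omega(r_\delta)$ on most pruned points, and conclude $1/\theta^2=O(d)$ from the ratio $r_\delta/B$ using $k=\Theta(\log(1/\delta))$. The only cosmetic difference is that you re-derive the margin claim directly from the Lugosi--Mendelson condition with the explicit direction $(\bmu-\x)/\|\bmu-\x\|$, whereas the paper cites its Lemma~\ref{lem:relax} (whose proof rests on the same condition) to produce the witness vector.
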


\begin{proof}[Proof of~\autoref{cor:scale}] 
Let $B = \max_{i\in \mathcal S} \|\bZ_i - \x\|$. 
Then $B$  is bounded by
\begin{align}
       \| \bZ_i - \x \| \leq \|\bZ_i -\bmu\| + \|\bmu - \x\| 
       % &\leq \|\bZ_i -\bmu\| + \|\bmu - \x_0\|\nonumber\\
       \leq O \left(\sqrt{ {\|\bSig\| kd}/{n} }\right).
\end{align}
By~\autoref{lem:relax},  there exists a unit vector $\bw$ such that for at least $0.8k $   points
in $\mathcal S$,  $\ip{\bZ_i - \x}{\bw} \geq   \theta'$ and $\theta' = \Omega (r_\delta)$.  Hence, we get that 
 \begin{equation*}
     \theta = \Omega\left(\frac{r_\delta}{B}\right) = \Omega\left( \frac{\sqrt{k\|\bSig\|/n} +
     \sqrt{\Tr\bSig /n }} {\sqrt{\|\bSig\| \cdot kd /n }}\right) =\Omega\left(1/ \sqrt{d}\right).
 \end{equation*}
\end{proof}

\begin{proof}[Proof of~\autoref{cor:2ass}]
The lemma follows directly from~\autoref{lem:prune} and~\autoref{cor:scale}.
\end{proof}

\section{Omitted proofs from~\autoref{sec:breg}}\label{sec:obreg}
\cite{karnin2012unsupervised} provides a rounding scheme that combines the sequence of vectors produced by \textsc{ApproxBregman} into one vector that satisfies the desired margin bound. The original routine succeeds with constant probability. We simply perform independent trials to boost the rate. 
\begin{figure}[ht]
      \centering\begin{mdframed}[style=algo] 
\begin{enumerate}
    \item \textbf{Input:} Buckets means $\bZ'$, unit vectors $\bw_1,\ldots,\bw_T \in \R^{  d}$,  margin $\theta$,
     \item Round to a single vector: $\bw = \frac{\bw'}{\| \bw' \|}$, where $\bw' = \sum_{t=1}^T g_t \bw_t$ and $g_t \sim \mathcal{N}(0,1)$, for $t =1,\ldots,T$.
    
    \item Repeat until $|\ip{\bZ'_i  }{\bw}| \geq \frac{1}{10} \theta$ for at least $0.6k'$ of $\bZ'_i $:
        \begin{enumerate}
            \item Sample $g_t \sim \mathcal{N}(0,1)$, for $t=1,\ldots,T$.
            \item Recompute $\bw = \bw' / \| \bw' \|$, where $\bw' = \sum_{t=1}^T g_t \bw_t$.
            \item Report \textsc{Fail} if  more than $\Omega \left(\log \left(T_{\textsf{des}}/\delta\right)\right)$ trials have been performed.
        \end{enumerate}
           \item \textbf{Output:} $\bw$ %if $\ip{\bZ'_i  }{\bw} \geq \frac{1}{10} \theta$ for at least $0.6k'$ of $\bZ'_i $ and $-\bw$ otherwise.
    \end{enumerate}
    \end{mdframed}
    \caption{Rounding algorithm---\textsc{Round}}
    \label{alg:round}
\end{figure}
We now analyze the algorithm. We cite the following lemma for the guarantee of the rounding algorithm (Algorithm~\ref{alg:round}).
\begin{lemma}[Lemma 6 of~\cite{karnin2012unsupervised}]\label{lem:round}
Suppose that for at least $\frac{3}{4}$ fraction of $i \in [k']$, it holds that 
\begin{equation}\label{eq:forround}
\sum_{t=1}^T \ip{\bZ'_i}{\bw_t}^2 \geq   \log k'.
\end{equation}
Let $\bw_1, \ldots, \bw_T$ be the unit vectors satisfying the
above condition. Then with constant probability, the vector $\bw$ in each repetition  
of the step 3 of the \textsc{Round} algorithm (Algorithm~\ref{alg:round}) satisfies $| \ip{\bZ_i}{\bw} | \geq  \theta/10 $ for at least a
$0.45$ fraction of $i \in [k']$.
\end{lemma}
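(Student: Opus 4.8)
The plan is to analyze a single run of the randomized rounding step and show it succeeds with probability bounded below by an absolute constant; the repetition loop inside \textsc{Round} is what later amplifies this to the high-probability bound used in \autoref{cor:fullround}. So I would fix the sampled Gaussians $g_1,\dots,g_T \sim \mathcal N(0,1)$ i.i.d., set $\bw' = \sum_{t=1}^T g_t \bw_t$ and $\bw = \bw'/\|\bw'\|$, and throughout use that in our instantiation $T = \Theta(\log k'/\theta^2)$, so $\sqrt T = \Theta(\sqrt{\log k'}/\theta)$.

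Two elementary facts drive everything. Writing $s_i := \sum_{t=1}^T \ip{\bZ'_i}{\bw_t}^2$, the scalar $\ip{\bZ'_i}{\bw'} = \sum_t g_t \ip{\bZ'_i}{\bw_t}$ is a centered Gaussian with variance exactly $s_i$; hence for any index $i$ in the good set $G := \{i : s_i \geq \log k'\}$ (of size $\geq \tfrac{3}{4} k'$ by hypothesis) and any absolute constant $c$, $\Pr[\,|\ip{\bZ'_i}{\bw'}| \geq c\sqrt{\log k'}\,] \geq \Pr[\,|\mathcal N(0,1)| \geq c\,] =: p_0$, with $p_0 \to 1$ as $c \to 0$. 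Second, since each $\bw_t$ is a unit vector, $\E\|\bw'\|^2 = \sum_{t,s} \E[g_t g_s]\ip{\bw_t}{\bw_s} = \sum_t \|\bw_t\|^2 = T$, so Markov gives $\Pr[\|\bw'\|^2 > 10T] \leq 1/10$. I would deliberately stick to this one-sided Markov bound rather than try to concentrate $\|\bw'\|^2$ about its mean: the $\bw_t$ may be arbitrarily correlated (all equal, in the extreme), so no sub-Gaussian concentration of $\|\bw'\|^2$ is available, and fortunately none is needed.

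The core step is a first-moment counting argument, which replaces the naive union bound over the $k'$ points (that bound fails, since the per-point success probability $p_0$ is only a constant). Let $N := |\{i \in G : |\ip{\bZ'_i}{\bw'}| \geq c\sqrt{\log k'}\}|$; then $\E[|G| - N] \leq (1-p_0)|G| \leq (1-p_0)k'$, so by Markov $|G| - N \leq 0.3 k'$ with probability bounded away from $1$ once $p_0$ is a sufficiently large constant (choose $c$ so that $p_0 > 0.7$). On that event at least $|G| - 0.3k' \geq 0.45 k'$ indices satisfy $|\ip{\bZ'_i}{\bw'}| \geq c\sqrt{\log k'}$; intersecting this with $\{\|\bw'\|^2 \leq 10T\}$ via $\Pr[A\cap B] \geq \Pr[A] - \Pr[B^c]$ keeps the joint probability a positive absolute constant. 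On the joint event, each of those $\geq 0.45k'$ indices has
\[
|\ip{\bZ'_i}{\bw}| = \frac{|\ip{\bZ'_i}{\bw'}|}{\|\bw'\|} \geq \frac{c\sqrt{\log k'}}{\sqrt{10T}} = \Omega(\theta),
\]
using $T = \Theta(\log k'/\theta^2)$, which is $\geq \theta/10$ after the absolute constants are chosen compatibly. That is exactly the claimed conclusion.

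The friction point I anticipate is purely bookkeeping: the single constant $c$ controls both $p_0$ (which must exceed roughly $3/4$ so that the $0.3k'$ Markov slack still leaves $0.45k'$ surviving indices) and the final margin $c\sqrt{\log k'}/\sqrt{10T}$, and the latter must clear $\theta/10$, which in turn constrains the constant hidden in $T = \Theta(\log k'/\theta^2)$ (equivalently, how large the lower bound in \autoref{lem:bregman-guarantee} must be). Balancing these absolute constants is routine but is the only real content beyond the two one-line probabilistic observations above.
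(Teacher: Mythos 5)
The paper does not actually prove this lemma; it imports it verbatim as Lemma~6 of Karnin~\etal, so there is no internal proof to compare against. Your argument is the standard (and, as far as I can tell, the intended) Gaussian-rounding proof: per good index, $\ip{\bZ'_i}{\bw'}$ is a centered Gaussian with variance $s_i\geq\log k'$, giving constant-probability anti-concentration at scale $c\sqrt{\log k'}$; a first-moment/Markov count over indices (which correctly requires no independence across $i$) converts the $\geq 3/4$ fraction of good indices into $\geq 0.45k'$ surviving indices with probability bounded away from zero; and a Markov bound on $\E\|\bw'\|^2=T$ controls the normalization. All three steps are sound, and $\Pr[A\cap B]\geq\Pr[A]-\Pr[B^c]$ closes the intersection correctly.

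The one caveat is the constant balancing you flagged, and it is worth being precise that the tension is real and is inherited from the paper rather than introduced by you: your final margin is $c\sqrt{L}/\sqrt{10T}$ where $L$ is the hypothesis's lower bound on $s_i$, and with the paper's actual instantiation $T\geq 10^5\log k'/\theta^2$ (from Lemma~\ref{lem:bregman-guarantee}) and $L=\log k'$ as stated here (or even the $L=100\log k'$ that Lemma~\ref{lem:bregman-guarantee} actually delivers), this evaluates to $\theta/\Theta(1000)$ rather than $\theta/10$. So the lemma as stated, which suppresses $T$ entirely, only holds with the conclusion read as ``margin $\theta/O(1)$'' or after rescaling the constants in $T$ and in the regret bound so that $L/T=\Omega(\theta^2)$ with the right leading constant; none of the downstream uses (Lemma~\ref{cor:fullround}, Lemma~\ref{lem:main}) depend on the specific value $1/10$, so this is cosmetic. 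Your proof is correct in structure and this bookkeeping issue is the only thing standing between it and a fully quantitative statement.
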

Now we prove the guarantee of \textsc{Round}.
\begin{proof}[Proof of~\autoref{cor:fullround}]
By~\autoref{lem:round}, it suffices to prove inequality~\eqref{eq:forround} holds for at least a $3/4$ fraction
of the points. By the regret analysis (\autoref{lem:bregman-guarantee}), the vectors $\bw_1, \ldots, \bw_T$ produced during  the iterations of Algorithm~\ref{alg:bregman} satisfy the hypothesis of \autoref{lem:round}. Hence, the guarantee
of~\autoref{lem:round} holds with constant probability. Moreover, we can test that this guarantee holds in time $O(T k' d)$. To boost the success probability to $1-\delta'$ (with $\delta' = \delta / 4T_{\textsf{des}}$), \textsc{Round} algorithm performs
$ O(\log (1/\delta'))$ independent trials. Hence, it reports \textsc{Fail} with probably at most $\delta'$.  Otherwise, by its definition, the output $\bw$ satisfies 
desired bound $|\ip{\bZ'_i}{\bw}| \geq 0.1\theta$ for  $0.45k$ of the points.
\end{proof}

\section{Full proof of main theorem}\label{sec:omain}
\begin{proof}[Proof of~\autoref{thm:most}]
Our argument assumes  the following global events.
\begin{enumerate}[(i)]
        \item The Lugosi-Mendelson condition (\autoref{asp:key}) holds.
        \item The initial estimate $\bbx_0$ satisfies $\| \bmu -\bbx_0 \|\leq  600  
            \sqrt{\|\bSig\| kd/n}$.
        \item The pruning step succeeds: $\|\bZ_i' - \bmu \| \leq O\left(  \sqrt{\|\bSig\| kd/n}\right) $ 
\end{enumerate}
We consider our main algorithm (Algorithm~\ref{alg:whole}) and first prove the correctness of \textsc{DistEst} and \textsc{GradEst}.
Let $\bZ'$ be
    defined as in line 2 of  \textsc{DistEst} and \textsc{GradEst}. 
    \autoref{lem:relax} states that there exists a margin $\theta^*$  in $\left[  \| \bmu - \x \| - 600r_\delta,   \| \bmu - \x \| +
    600r_\delta\right]$. 
    When $\|\bm \mu -
    \bbx_t \| \geq 14000r_\delta$, we have that
    for at least $0.8k$ points $\bZ_i'$ it holds
    $B\cdot |\ip{\bZ_i'}{\bw^*}| \geq \theta^*$ for some unit vector $\bw^*$, since the data are scaled by $B$. 
    Furthermore, when the pruning step succeeds,~\autoref{asp:key} holds. 
    This allows us to apply the key lemma (\autoref{lem:main}). 
    \begin{enumerate}[(i)]
        \item 
    For \textsc{GradEst}, we use binary search in line 3 to find $\theta = \theta^*/ B$. 
    By the guarantee of~\autoref{lem:main}, $|\ip{\bw}{\bZ_i'}|\geq \tfrac{\theta}{10}$ for at
    least $\nicefrac{k}{8}$ of the $\bZ_i$. It follows that  $|\ip{\bw}{\bZ_i - \x_t }|\geq
    \tfrac{B\theta}{10}$ for at least $\nicefrac{k}{8}$ of the $\bZ_i$.   Thus, \autoref{cor:estimate}
    implies that the output $\bg_t$  is a gradient estimate.
\item By the same argument, we apply \autoref{cor:estimate}
 and conclude that $\widehat{d}_t$ of \textsc{DistEst} is a distance estimate. 
    \end{enumerate}
    Finally, we apply \autoref{lem:descent-thm} for the guarantee of \textsc{Descent}. 

    Next we bound several failure probabilities of the algorithm. The first three correspond to the global
    conditions.
\begin{itemize}
    \item By~\autoref{lem:lmm}, the Lugosi-Mendelson condition \autoref{asp:key} fails with probability
        at most $\delta/8$.
    \item By \autoref{lem:gm}, the coordinate-wise median-of-means error bound fails with probability at most
        $\delta/8$ 
    \item By~\autoref{cor:2ass}, the  guarantee of our
        pruning  and scaling procedure (\autoref{ass:scale}) fails with probability at most $\delta/8$.
    \item  Conditioned on above, the \textsc{ApproxBregman} satisfies the guarantee of the key lemma (\autoref{lem:main}). The failure probability is at most $\delta/4T_{\textsf{des}}$ each iteration. We take union bound over all these iterations. 
\end{itemize}
Overall, the failure probability of the entire algorithm (Algorithm~\ref{alg:whole}) is bounded
by $\delta$ via union bound.

The runtime follows from~\autoref{lem:main} which claims each iteration takes time $\widetilde O(k^2 d)$ and the fact that $T_{\textsf{des}} = \widetilde O(1)$.
\end{proof}

\section{Interpretation of FHP algorithm \texorpdfstring{\cite{karnin2012unsupervised}}{} as regret minimization}
\label{sec:fhp-mwu}

Here, we review the bicriteria approximation algorithm of Karnin \etal~\cite{karnin2012unsupervised} and show how it can be interpreted in the multiplicative weights update (MWU) framework for regret minimization. Given $\bZ_1, \ldots, \bZ_k \in \R^d$ such that $\| \bZ_i \| \leq 1$, we study the following \emph{furthest hyperplane problem}:
\begin{align*}
\text{Find} \quad &\bw \in S^{d-1} \\
\text{subject to} \quad &|\ip{\bZ_i}{\bw}| \geq r \text{ for } i = 1,\ldots, k, 
\end{align*}
where we are promised that there does indeed exist a feasible solution $\bw^*$. Since this problem is (provably) hard (even to approximate) we will settle for \emph{bicriteria approximate} solutions. By this, we simply mean that we require the algorithm to output some $\bw$ such that $|\ip{\bZ_i}{\bw}| \geq \frac{r}{10}$ for most of the $i \in [k]$. For our applications, the particular constants will not matter much, as long as they are actually constants. 

See Algorithm~\ref{alg:fhp} for a formal description. First we give some intuition and then we sketch the important steps in the analysis. 
\begin{figure}[htbp]
    \centering\begin{mdframed}[style=algo] 
\begin{enumerate}
    \item \textbf{Input:} $\bZ_1,\ldots,\bZ_k \in \R^{d}$ and iteration count $T \in \N$.
    
    \item Initialize weights:  $\bm\tau_1 = \frac{1}{k} (1, \ldots,1) \in \R^k$.
    
    \item For $t=1,\ldots,T$, repeat: 
        \begin{enumerate}
            \item Let $\bA_t$ be the $k \times d$ matrix whose $i$th row is $\sqrt{\bm\tau_t(i)} \bZ_i$ and $\bw_t$ be the top right unit singular vector of $\bA_t$. 
            
            \item Set $\bm\sigma_t(i) = | \ip{\bZ_i}{\bw_t} |$.
            
            \item Reweight: $\bm\tau_{t+1}(i) =\bm \tau_t(i) \eta^{-\bm\sigma^2_t(i)}$ for $i \in [k]$ for an appropriately chosen constant $\eta$. In MWU language, $\bm\sigma^2_t$ is the loss vector at time $t$.
            \item Normalize: Let $Z = \sum_{i \in [k]} \bm\tau_{t+1}(i)$ and redefine $\bm\tau_{t+1} \leftarrow \frac{1}{Z} \bm\tau_{t+1}$.
            
        \end{enumerate}
        
    \item \textbf{Output:} $\bw_1, \ldots, \bw_T \in S^{d-1}$.
\end{enumerate}
\end{mdframed}
    \caption{Iterative MWU procedure}
    \label{alg:fhp}
\end{figure}

\subsection{Intuition}
Because we are promised that $\bw^*$ exists, averaging the constraints yields:
\[
\frac{1}{k} \sum_{i=1}^k \ip{\bZ_i}{\bw^*}^2 \geq r^2.
\]
Note that if we define $\bA_1$ as in Algorithm~\ref{alg:fhp}, then the definition of singular vector tells us that:
\[
\max_{\bw \in S^{d-1}} \| \bA_1 \bw \|^2 = \frac{1}{k} \sum_{i=1}^k \ip{\bZ_i}{\bw}^2 \geq \frac{1}{k} \sum_{i=1}^k \ip{\bZ_i}{\bw^*}^2 \geq r^2.
\]
Thus, $\bw_1$, the top singular vector as defined in Algorithm~\ref{alg:fhp}, satisfies the constraints \emph{on average}. It could be the case that $\ip{\bZ_4}{\bw_1}^2 \gg r^2$ but $\ip{\bZ_i}{\bw}^2 \ll r^2$ for all $i \neq 4$. To fix this issue, we would simply down-weight $\bZ_4$ in the next iteration, so that $\bw_2$ aligns more with $\bZ_i$ for $i \neq 4$. We repeat this several times, with each $\bw_t$ improving upon $\bw_{t-1}$. 

At the end, the algorithm produces a collection of vectors $\bw_1, \ldots, \bw_T$ which each satisfy a certain property. While it seems natural to just output $\bw_T$ as the final answer, it turns out that this will not work. Instead, we need to apply a randomized rounding procedure to extract a single vector $\bw$ from $\bw_1, \ldots, \bw_T$.

\subsection{Analysis}
\begin{lemma}
When Algorithm~\ref{alg:fhp} terminates after $T = O(\frac{\log k}{r^2})$ iterations, for every $i \in [k]$ it holds that: 
\[
\sum_{t=1}^T \ip{\bZ_i}{\bw_t}^2 \geq \frac{\log k}{\log \eta}.
\]
\end{lemma}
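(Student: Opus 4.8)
The plan is to run the standard multiplicative-weights potential argument. It is convenient to track the \emph{unnormalized} weights $\widetilde{\bm\tau}_t$ defined by $\widetilde{\bm\tau}_1 = \tfrac1k(1,\dots,1)$ and $\widetilde{\bm\tau}_{t+1}(i) = \widetilde{\bm\tau}_t(i)\,\eta^{-\bm\sigma_t^2(i)}$; since the reweighting is purely multiplicative and $\bm\sigma_t$ depends only on the \emph{direction} of the current weight vector (through $\bw_t$, the top singular vector of $\bA_t$), we have $\bm\tau_t = \widetilde{\bm\tau}_t/\|\widetilde{\bm\tau}_t\|_1$ at every step, so this is just Algorithm~\ref{alg:fhp} with the normalizations postponed. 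Let $\Phi_t = \|\widetilde{\bm\tau}_t\|_1$, so that $\Phi_1 = 1$. The whole proof consists of sandwiching $\Phi_{T+1}$ between two bounds.

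First, unrolling the update gives $\widetilde{\bm\tau}_{T+1}(i) = \tfrac1k\,\eta^{-\sum_{t=1}^T \bm\sigma_t^2(i)}$, so for every fixed $i\in[k]$,
\[
\Phi_{T+1}\ \ge\ \widetilde{\bm\tau}_{T+1}(i)\ =\ \tfrac1k\,\eta^{-\sum_{t=1}^T \ip{\bZ_i}{\bw_t}^2}.
\]
For the matching upper bound, note that $\bm\sigma_t^2(i) = \ip{\bZ_i}{\bw_t}^2 \le \|\bZ_i\|^2\|\bw_t\|^2 \le 1$ since $\bw_t$ is a unit vector and $\|\bZ_i\|\le1$, so by convexity of $x\mapsto\eta^{-x}$ on $[0,1]$ we have $\eta^{-\bm\sigma_t^2(i)} \le 1 - (1-1/\eta)\bm\sigma_t^2(i)$. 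Summing this against $\widetilde{\bm\tau}_t$ and using $\sum_i \bm\tau_t(i)\ip{\bZ_i}{\bw_t}^2 = \|\bA_t\bw_t\|^2$,
\[
\Phi_{t+1}\ \le\ \Phi_t\Bigl(1-(1-1/\eta)\,\|\bA_t\bw_t\|^2\Bigr).
\]
The crucial step is that $\|\bA_t\bw_t\|^2 \ge r^2$ in \emph{every} iteration: since $\bm\tau_t$ is a probability distribution (this is exactly what the normalization step guarantees) and $\bw_t$ is the top right singular vector of $\bA_t$, we get $\|\bA_t\bw_t\|^2 \ge \|\bA_t\bw^*\|^2 = \sum_i \bm\tau_t(i)\ip{\bZ_i}{\bw^*}^2 \ge r^2$, where the last inequality uses the \emph{strong} promise that $\ip{\bZ_i}{\bw^*}^2 \ge r^2$ holds for \emph{all} $i$. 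Iterating the recursion from $\Phi_1 = 1$ then yields $\Phi_{T+1} \le (1-(1-1/\eta)r^2)^T \le e^{-(1-1/\eta)r^2 T}$.

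Combining the two bounds, $\tfrac1k\,\eta^{-\sum_t \ip{\bZ_i}{\bw_t}^2} \le e^{-(1-1/\eta)r^2 T}$; taking logarithms and rearranging gives $\sum_{t=1}^T \ip{\bZ_i}{\bw_t}^2 \ge \bigl((1-1/\eta)r^2 T - \log k\bigr)/\log\eta$, and choosing $T = 2\log k/\bigl((1-1/\eta)r^2\bigr) = O(\log k / r^2)$ (with $\eta$ a fixed constant) makes the right-hand side at least $\log k/\log\eta$, as claimed. The only real content is the uniform bound $\|\bA_t\bw_t\|^2 \ge r^2$: this is where both the per-step normalization and the all-constraints promise enter, and it is precisely the step that collapses when $\bw^*$ is guaranteed to satisfy only most of the constraints --- exactly the difficulty that motivates the Bregman projection onto smooth distributions in Section~\ref{sec:breg}.
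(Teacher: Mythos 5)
Your proof is correct and is in substance the same argument as the paper's: the paper invokes the MWU regret bound of Arora--Hazan--Kale (Theorem 2.1) as a black box together with the per-round lower bound $\ip{\bm\tau_t}{\bm\sigma_t^2}=\|\bA_t\bw_t\|^2\geq r^2$ derived from the top-singular-vector property and the all-constraints promise, which is exactly your key step. The only difference is that you re-derive the regret bound from scratch via the potential function $\Phi_t=\|\widetilde{\bm\tau}_t\|_1$ (and you correctly note that the postponed normalization is harmless since $\bw_t$ depends only on the direction of the weights); your closing remark about where the argument breaks under the weak promise matches the paper's motivation for the Bregman projection.
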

\begin{proof}
Algorithm~\ref{alg:fhp} is simply the MWU algorithm with the  experts corresponding to the $k$ constraints and the loss of expert $i$ at time $t$ being $\bm\sigma^2_t(i)$. Using the regret guarantee from Theorem 2.1 in~\cite{arora2012multiplicative} with respect to the fixed expert $\be_i$ and step size $\eta$:
\begin{equation}
    \sum_{t=1}^T \ip{\bm \tau_t}{\bm\sigma^2_t} - (1+ \eta) \sum_{t=1}^T \ip{\be_i}{\bm\sigma^2_t} \leq \frac{\log k}{\eta}.
\end{equation}
Note that
\[
\sum_{t=1}^T \ip{\be_i}{\bm\sigma^2_t} = \sum_{t=1}^T \ip{\bZ_i}{\bw_t}^2
\]
and
\begin{align*}
\sum_{t=1}^T \ip{\bm\tau_t}{\bm\sigma^2_t} &= \sum_{t=1}^T \sum_{i=1}^k \bm\tau_t(i) \bm\sigma^2_t(i) \\
&= \sum_{t=1}^T \sum_{i=1}^k \bm\tau_t(i) \ip{\bZ_i}{\bw_t}^2 && (\text{by definition of the algorithm})\\ 
&\geq \sum_{t=1}^T \sum_{i=1}^k \bm\tau_t(i) \ip{\bZ_i}{\bw^*}^2 && (\text{since $\bw^*$ is the top eigenvector})\\
&\geq \sum_{t=1}^T \sum_{i=1}^k \bm\tau_t(i) r^2 \\
&= T r^2.   
\end{align*}
Substituting into and simplifying the regret formula and taking $\eta = \nicefrac{1}{3}$ gives the claim.
\end{proof}
Given the previous lemma, we can just apply the rounding algorithm as a black-box to the output of Algorithm~\ref{alg:fhp}.
\begin{lemma}[\cite{karnin2012unsupervised}]
Let $\alpha \in (0,1)$ and $\bw_1, \ldots, \bw_T$ be unit vectors satisfying the conclusion of the previous lemma. Then with probability at least $1/147$, the output $\bw$ of the Rounding Algorithm~\ref{alg:round} satisfies $| \ip{\bZ_i}{\bw} | \geq \alpha r$ for at least a $1 - 3 \alpha$ fraction of $i \in [k]$.
\end{lemma}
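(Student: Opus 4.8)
The plan is to analyze the randomized rounding $\bw = \bw'/\|\bw'\|$, where $\bw' = \sum_{t=1}^T g_t \bw_t$ with $g_t \sim \mathcal{N}(0,1)$ i.i.d., by exploiting two facts: that $\|\bw'\|^2$ is of order $T$ in expectation, and that for each fixed $i$ the scalar $\ip{\bZ_i}{\bw'}$ is a centered Gaussian whose variance $s_i^2 := \sum_{t=1}^T \ip{\bZ_i}{\bw_t}^2$ is $\Omega(T r^2)$ by the previous lemma. I would then show that after dividing by $\|\bw'\|$ the margin survives up to a constant factor for all but a $3\alpha$ fraction of the $\bZ_i$.

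First I would record the distributional facts. Since $\bw'$ is a Gaussian vector, $\E \|\bw'\|^2 = \sum_{t=1}^T \|\bw_t\|^2 = T$ (cross terms vanish in expectation because the $g_t$ are independent and mean zero), so Markov's inequality gives $\Pr[\|\bw'\|^2 > 2T] \leq 1/2$; denote the complementary event by $E$. Likewise $\ip{\bZ_i}{\bw'} = \sum_{t=1}^T g_t \ip{\bZ_i}{\bw_t} \sim \mathcal{N}(0, s_i^2)$, and combining the previous lemma's bound $\sum_t \ip{\bZ_i}{\bw_t}^2 = \Omega(\log k)$ with the choice $T = \Theta(\log k / r^2)$ yields $s_i^2 \geq (4/\pi)\, T r^2$ once the implied constants in the prior lemma are fixed appropriately (this is the only place the precise constants there enter).

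Next I would define, for each $i$, the bad event $B_i := \{\, |\ip{\bZ_i}{\bw'}| < \alpha r \sqrt{2T}\,\}$ using a \emph{fixed} threshold (not one depending on $\|\bw'\|$). By the standard anti-concentration bound $\Pr[|\mathcal{N}(0,1)| < x] \leq x\sqrt{2/\pi}$ together with the variance lower bound, $\Pr[B_i] \leq \alpha r\sqrt{2T}\sqrt{2/\pi}/s_i \leq \alpha$. Hence, writing $G := |\{i : B_i \text{ holds}\}|$, we get $\E G \leq \alpha k$, so Markov gives $\Pr[G \geq 3\alpha k] \leq 1/3$, and a union bound on complements yields $\Pr[E \cap \{G < 3\alpha k\}] \geq 1 - 1/2 - 1/3 = 1/6 \geq 1/147$. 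On this event, for every $i$ outside the bad set we have $|\ip{\bZ_i}{\bw'}| \geq \alpha r \sqrt{2T} \geq \alpha r \|\bw'\|$ by the definition of $E$, hence $|\ip{\bZ_i}{\bw}| = |\ip{\bZ_i}{\bw'}|/\|\bw'\| \geq \alpha r$; and there are at least $(1-3\alpha)k$ such $i$, which is exactly the claim. (The outer repeat-loop of Algorithm~\ref{alg:round} then boosts this constant success probability to $1-\delta'$, but that is handled in~\autoref{cor:fullround}, not here.)

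The main obstacle — really the only conceptual point — is the tension in the normalization: $\|\bw'\|$ is of order $\sqrt{T}$, so dividing by it would destroy the margin unless each $|\ip{\bZ_i}{\bw'}|$ is itself of order $\sqrt{T}\,r$, i.e. unless $s_i^2 = \Omega(T r^2)$. This is precisely what the previous lemma provides, but \emph{only} because $T$ is taken as small as $\Theta(\log k / r^2)$: the lemma guarantees $\sum_t \ip{\bZ_i}{\bw_t}^2 = \Omega(\log k)$ and $\log k = \Theta(T r^2)$ for that value of $T$. Everything else is routine Gaussian anti-concentration plus two applications of Markov's inequality, and the gap between the $1/6$ obtained above and the stated $1/147$ is slack we need not optimize.
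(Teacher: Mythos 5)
The paper does not actually prove this lemma; it is invoked as a black box from \cite{karnin2012unsupervised} (as is \autoref{lem:round}). Your write-up therefore supplies an argument where the paper gives only a citation, and the argument you give is the standard Gaussian-rounding analysis, which to the best of my knowledge matches the original one of Karnin \etal: use independence of the $g_t$ to see that $\ip{\bZ_i}{\bw'}\sim\mathcal{N}(0,s_i^2)$ with $s_i^2=\sum_t\ip{\bZ_i}{\bw_t}^2=\Omega(Tr^2)$, control $\|\bw'\|^2$ by Markov against its mean $T$, use a \emph{fixed} threshold $\alpha r\sqrt{2T}$ so that Gaussian anti-concentration bounds each $\Pr[B_i]$ by $O(\alpha)$, and finish with Markov on the count of bad indices plus a union bound. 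The structure, the event decomposition, and the final accounting are all correct, and you rightly isolate the one load-bearing point: the rounding only survives the normalization by $\|\bw'\|\approx\sqrt{T}$ because $T=\Theta(\log k/r^2)$ makes the per-constraint guarantee $\Omega(\log k)$ equal to $\Omega(Tr^2)$.

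One quantitative caveat. You ask for $s_i^2\geq (4/\pi)Tr^2$ so that $\Pr[B_i]\leq\alpha$ exactly, and assert this can be arranged by tuning constants in the previous lemma. It cannot: the regret bound yields $s_i^2\geq \frac{Tr^2-\log k/\eta}{1+\eta}$, whose guaranteed value is strictly below $Tr^2 < (4/\pi)Tr^2$ for any $\eta>0$ and any $T$. What \emph{is} achievable is $s_i^2\geq (1-\epsilon)Tr^2$ (take $\eta$ small and $T$ a correspondingly larger constant multiple of $\log k/r^2$), which gives $\Pr[B_i]\leq \frac{2\alpha}{\sqrt{\pi(1-\epsilon)}}\approx 1.2\alpha$ rather than $\alpha$; Markov then gives $\Pr[G\geq 3\alpha k]\leq 0.4$ and the union bound still leaves success probability about $0.1$, comfortably above $1/147$. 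So the defect is a misstated constant, not a broken proof — the slack you correctly decline to optimize absorbs it — but the parenthetical claim that the prior lemma's constants can be pushed to $4/\pi$ should be replaced by this weaker, attainable variance bound.
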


\end{document}